\documentclass[preprint,3p]{elsarticle}
\usepackage{appendix}
\usepackage{amsmath,amsthm,amsfonts,amssymb,lineno,mathrsfs}
\usepackage{hyperref}
\usepackage{graphicx}
\usepackage{epsfig}
\usepackage{xcolor}
\usepackage{mathtools}
\usepackage[english]{babel}
\usepackage[latin1]{inputenc}

\biboptions{sort&compress}
\journal{J Funct Analysis}
\newtheorem{theorem}{Theorem}[section] 
\newtheorem{corollary}{Corollary}[section] 
\newtheorem{definition}{Definition}[section] 
\newtheorem{lemma}{Lemma}[section] 
\newtheorem{proposition}{Proposition}[section] 
\newtheorem{remark}{Remark}[section] 
\newtheorem{conditions}{Conditions}[section] 
\numberwithin{equation}{section} \numberwithin{theorem}{section}
\numberwithin{lemma}{section} \numberwithin{corollary}{section}
\numberwithin{definition}{section} \numberwithin{remark}{section}
\numberwithin{proposition}{section}
\numberwithin{conditions}{section}

\numberwithin{assumptions}{section}
\begin{document}%
%
%
%
%
%
%
%
%
%

\begin{frontmatter}
\title{Kurzweil's impulsive inverse problem for the Kelvin--Voigt equations with $p$-Laplacian}

\author[Address01]{Stanislav Antontsev}
\ead{antontsevsn@mail.ru}
\author[Address02]{Pardeep Kumar}
\ead{s.pardeep82@gmail.com}

\author[Address03]{Ivan Kuznetsov}
\ead{ivan.usta@math.kz}
\address[Address01]{Voronezh State University, Voronezh, Russian Federation}
\address[Address02]{Independent Researcher, India}
\address[Address03]{Institute of Mathematics and Mathematical Modeling, Almaty, Kazakhstan}
\begin{abstract} 
In the present paper, we proceed with the research initiated in (S. Antontsev \emph{et al.}, Inverse Problems, 2021). We study  the inverse problem for the Kelvin--Voigt equations with $p$-Laplacian and with a right-hand side in the integral over-determination condition, which approximates a discontinuous function at $t=0$. We rescale the problem in a neighbourhood $[0,1/n]$ and pass to the limit as $n$ approaches infinity. This leads to the inverse problem on an infinitesimal initial layer, which transforms the initial velocity into a new one. 
\\
\noindent{\bf Keywords}: Kelvin--Voigt equations, $p$-Laplacian, inverse problem, integral over-determination, impulsive differential equation, weak solutions
\end{abstract}
\end{frontmatter}

\section{Introduction}\label{sec1}
In the present paper, we study an inverse problem for the Kelvin--Voigt equations with $p$-Laplacian. Let $\Omega \subset \mathbb{R}^d, \ d \in \{2,3\}$ be a bounded domain with a smooth boundary $\partial\Omega\in C^2$. We will use the notations $Q_{T}:=\Omega\times(0,T)$,  $\Gamma_{T}:=\partial\Omega\times(0,T)$ and $T>0$.  We study the following inverse problem for the Kelvin--Voigt equations with $p$-Laplacian: 
\begin{equation}
\label{eq1.1}
\begin{cases}
&\begin{split} &\partial_t\mathbf{v}
-\mu \ \mathrm{div}\big(|\mathbf{D}(\mathbf{v})|^{p-2}\mathbf{D}(\mathbf{v})\big) 
+(\mathbf{v} \cdot \nabla)\mathbf{v}
-\varkappa\Delta \partial_t\mathbf{v}+\nabla\pi
=\varphi(t)\,\mathbf{f}(x,t) \ \
\text{ in }Q_{T},
\end{split}
\\
&\operatorname{div}\mathbf{v}=0 \ \ \text{ in }Q_{T},
\\
&\mathbf{v}(x,0)=\mathbf{v}_0(x) \ \ \text{ in }\Omega,
\\
&\mathbf{v}(x,t)=\boldsymbol{0} \ \ \text{ on }\Gamma_{T},
\end{cases}
\end{equation}
with the integral over-determination condition
\begin{equation}\label{eq1.2}
\int_\Omega \mathbf{v}(x,t)\cdot(\boldsymbol{\omega}(x)-\varkappa\Delta\boldsymbol{\omega}(x))\,dx=j(t) \ \ \text{ in }(0,{T}).
\end{equation}
Here, $\mathbf{v}(x,t)$, $\pi(x,t)$, and $\varphi(t)$ represent the velocity  field, the pressure field, and the time-dependent part of the external forcing function, respectively. The term $\mathbf{D}(\mathbf{v})=\frac{1}{2}(\nabla \mathbf{v}+\nabla \mathbf{v}^{\mathrm{T}})$ is the rate of strain tensor, $\mathbf{f}(x,t)$ and $\boldsymbol{\omega}(x)$ are given vector-valued functions, $j(t)$ is a given scalar-valued function, and $\mathbf{v}_0(x)$ is a given initial velocity field.  The positive constant $\mu$ denotes the dynamic viscosity, while $\varkappa$ is a length scale parameter that characterizes the elasticity of the fluid and the ratio $\frac{\varkappa}{\mu}$ represents the relaxation time, the exponent $p\in(1,2]$. To obtain the uniqueness of the pressure $\pi$, one can impose the condition $\int_{\Omega}\pi({x},t)\,d{x}=0,$ for $t\in [0,T]$. In order to deal with a global weak solution, see \cite[Sec. 8]{Antontsev2023}, we take in \eqref{eq1.1} the function $\mathbf{f}$ in the following form
\begin{equation}
\label{eq1.2'}
\mathbf{f}(x,t)=\boldsymbol{\omega}(x)-\varkappa \Delta \boldsymbol{\omega}(x).
\end{equation}

In this paper, we study the nonlinear impulsive inverse problem aimed at recovering the velocity field $\mathbf{v}(x,t)$, the pressure field $\pi(x,t)$, and the time dependent part $\varphi(t)$ in the external forcing term of the momentum equation, from the system \eqref{eq1.1} with the integral over-determination condition \eqref{eq1.2}, and the given constants $\mu$, $\varkappa$, and functions $\boldsymbol{\omega}(x)$, $\mathbf{v}_0(x),$ and $j(t)$.

Inverse problems involving the determination of a coefficient or the right-hand side of a differential equation arise in the mathematical modeling of physical processes when, in addition to solving the equation, it is necessary to recover certain physical properties of the system or the effects of external sources of the system. The system \eqref{eq1.1} represents a non-Newtonian fluid model describing the motion of incompressible viscoelastic fluids. 

Various direct problems  for classical Kelvin--Voigt equations, i.e., when the right-hand side is given and $p=2$ in \eqref{eq1.1}, have been studied in \cite{Oskolkov1977,Oskolkov1985,Oskolkov1995,Oskolkov1997}.  See also  recent results on a thorough analysis of direct problems \cite{bib-4,bib-5,bib-7,bib-8,Antontsev2021}.

According to the classical theory of inverse problems, the right-hand side of an integral over-determination condition should be continuous  or belong to an appropriate Sobolev space. We recommend the monograph \cite{Prilepko} for a broad overview of this subject. For a continuous right-hand side $j(t)$ in integral over-determination condition \eqref{eq1.2}, inverse problems were studied for the Navier--Stokes equations, i.e., $p=2$ and $\varkappa=0$ in \eqref{eq1.1}, in \cite{Prilepko}, for the Kelvin--Voigt equations, i.e., $p=2$ in \eqref{eq1.1}, in \cite{AntKh,Khom,Kumar-2021}, for the Kelvin--Voigt equations with $p$-Laplacian and damping term in \cite{AntKh}, and for the convective Brinkman--Forchheimer (CBF) equations in \cite{Kumar} and the references therein. The authors established a local in time existence and uniqueness of the inverse problem for Kelvin--Voigt fluids with memory term by the contraction principle in \cite{Kumar-2021}.
In \cite{AntKh}, the authors studied the existence and uniqueness of the inverse problem for Kelvin--Voigt equations with the $p$-Laplace diffusion and damping term. 
Recently, the well-posedness of an inverse problem for two-dimensional CBF equations with integral overdetermination data was established in \cite{Kumar} using an extension of the contraction mapping theorem. 

For a comprehensive overview of inverse problems for the Navier--Stokes system and related models, particularly those concerned with recovering external force densities or equation coefficients using integral or final  over-determination, the reader is referred to \cite{Khom,Chebotarev-2012,Kumar,Prilepko,Vasin,Antontsev2023,Antontsev2024}, and the references therein. See also publications on inverse problems for parabolic and pseudoparabolic equations \cite{AAA,KHSI,KHM,bib-18,bib-19,bib-20,bib-21,bib-9,bib-10}.

In contrast to the results obtained for the 2D Navier--Stokes equations in \cite{Prilepko,Vasin}, for the generalized Kelvin--Voigt equations with $p$-Laplacian and damping term in \cite{AntKh} and for the 2D CBF equations in \cite{Kumar}, where the right-hand side of the integral over-determination is assumed to be smooth, our results for the impulsive inverse problem of the 2D Kelvin--Voigt equations with $p$-Laplacian remain valid even when the right-hand side of the integral over-determination condition is discontinuous at $t=0$.

There are at least two approaches to impulsive differential equations. The first approach is based on classical theory \cite{Lakshmikantham1989}, where the discontinuities of an unknown solution are described by algebraic conditions on its one-sided traces at fixed time points $t=\tau_i$, $i\in I\subseteq\mathbb{N}$. A given discontinuity is called an impulse. 
Mathematical questions of the stability and blow-up of an unknown solution under multiple impulses have been broadly studied. The second approach was proposed by J. Kurzweil in \cite{Kurzweil1958,Kurzweil2011}. Discontinuities of an unknown solutions are provoked in the limit by a singular source term, approximating the Dirac delta function at various fixed time points. Instead of multiple impulses, we treat a single impulse at the initial time. The simple example is 
\begin{equation} 
\label{ex}
{x}^\prime(t)=j'(t)x(t),
\end{equation}
where $j(t)$ approximates the Heaviside function. Instead of $j'(t)$ we can take a sequence $n\mathbf{1}_{(0,1/n)}(t)$, which approximates the Dirac delta function at $t=0$.
Here, we use the well-known result that a weak derivative of the Heaviside function is the Dirac delta function $\delta_{(t=0)}$.
In the limit, this leads to the natural algebraic condition at the discontinuity 
$$
x(0^+)=x(0)e.
$$
In cases, where we cannot split independent variables, for example in 
\[
{x}_n^\prime(t)=f(x_n(t))+n\mathbf{1}_{(0,1/n)}(t) { g(x_n(t))},
\]
we introduce a fast time variable $\overline{t}=nt\in[0,1]$ and a scaled solution $\bar{x}(\overline{t})=x(\overline{t}/n)$:  
\[
\bar{x}_n^\prime(\overline{t})=\frac1n f(\bar{x}_n(\overline{t}))+g(\bar{x}_n(\overline{t})).
\]
In the limit as $n\to\infty$, this leads to the solution of the infinitesimal initial layer:
\[
\bar{x}^\prime(\overline{t})=g(\bar{x}(\overline{t})).
\]
The final trace of a solution at $\overline{t}=0$ is a right-sided trace of the limit discontinuous solution at $t=0$.  

To avoid mixing such theory with the classical theory of impulsive differential equations, where algebraic conditions are artificial and do not follow from passing to the limit in a singular term, we call such equations Kurzweil's impulsive differential equations. We have applied this approach to various types of PDEs. In \cite{AKSS-2024}, we studied the direct initial boundary value problem similar to \eqref{eq1.1}: 
\begin{equation*}
\partial_t\mathbf{v}
-\mu \Delta\mathbf{v}+(\mathbf{v} \cdot \nabla)\mathbf{v}
-\varkappa\Delta \partial_t\mathbf{v}+\nabla\pi
=\varphi(t)\mathbf{v} \ \ 
\text{ in }Q_{T},
\end{equation*}
where $\varphi(t)$ approximates the Dirac delta function $\delta_{(t=0)}$. 

In inverse problems, an unknown term on the right is singular because the right-hand side of the over-determination condition approximates the discontinuity at the initial time. In comparison with direct problems, we cannot assume that the unknown term approximates the Dirac function. Since the term $\varphi$ can be expressed in an explicit form
\begin{align*}
\varphi(t,[\mathbf{v}])=\frac1{
\|\boldsymbol\omega\|_{\mathbb{H}}^2+
\varkappa\|\nabla\boldsymbol\omega\|_{\mathbb{H}}^2}\left(
\mu\int_\Omega|\mathbf{D}(\mathbf{v})|^{p-2}\mathbf{D}(\mathbf{v}):\nabla\boldsymbol\omega\,dx 
-\int_\Omega((\mathbf{v}\cdot\nabla)\boldsymbol\omega)\cdot\mathbf{v}\,dx+j'(t)\right),
\end{align*}
and {$j'(t)$ approximates the Dirac delta function,  this means that $\varphi$ approximates some unknown functional which can be expressed as a linear combination of the Dirac delta function $\delta_{(t=0)}$ and a function from $L^2(0,T)$, see Remark \ref{rem}.}

This paper is organised as follows. Sec. \ref{sec2} begins with introducing the relevant functional spaces, then formulates the main inverse problem, and establishes its relation to the corresponding functional direct problem. In  Sec. \ref{sec3}, the existence and uniqueness theorem \ref{Theorem.2.1} is proven. The proof is partly based on the results \cite{Vasin,Prilepko}. The main novelty is the estimates, which are uniform in $n\in\mathbb{N}$. In Sec. \ref{sec4}, we pass to the limit as $n\to\infty$. Theorem \ref{theorem3.1} is the main result of this paper. Auxiliary procedures, such as scaling and shifting, are applied. We deduce in the limit, as $n\to\infty$, the linear inverse problem in the infinitesimal initial layer, which gives the new initial velocity in the form: $\mathbf{v}_0({x})+(j(0^+)-j(0))\boldsymbol{\omega}({x})$. Therefore, if there is no discontinuity at $t=0$ for the function $j(t)$, i.e., $j(0^+)=j(0)$, there is no discontinuity for the velocity at $t=0$.   

 \section{The main inverse problem}\label{sec2}
We begin this Sec. by introducing the function spaces and  some important results that will be used throughout the paper. We then present the main inverse problem along with the necessary conditions on the input data required to establish our main results.

\subsection{\bf Function spaces}\label{sub2.1}
\vspace{2mm}
Let $\mathbb{C}_{0}^{\infty}(\Omega;\mathbb{R}^d)$ be the space of all infinitely differentiable functions  ($\mathbb{R}^d$-valued) with compact support in $\Omega\subset\mathbb{R}^d$. Let us introduce the following function spaces
widely used in mathematical fluid mechanics; see \cite{Temam}:
\begin{align*}
&\mathcal{V}:=\Big\{\mathbf{v}\in \mathbb{C}_{0}^\infty(\Omega):\, \operatorname{div} \mathbf{v}=0\Big\},
\\
&\mathbb{H}:=\text{ closure of }\mathcal{V} \text{ in the $(L^2(\Omega))^d$--norm},
\\
&\mathbb{V}_{p}:=\text{ closure of }\mathcal{V}\text{ in the $(W_0^{1,p}(\Omega))^d$--norm},\quad p >1,
\\
&\widetilde{\mathbb{L}}^m:=\text{ closure of }\mathcal{V} \text{ in the $(L^m(\Omega))^d$--norm}, \quad m\in (2,\infty].
\end{align*}
As usual, when $p=2$, we denote $\mathbb{V}_{p}$ by $\mathbb{V}$ instead of $\mathbb{V}_{2}$. Then, under some smoothness assumptions on the boundary, we characterize the spaces $\mathbb{H}$, $\mathbb{V}$, $\widetilde{\mathbb{L}}^m$, and $\widetilde{\mathbb{L}}^\infty$ as 
$$\mathbb{H}=\Big\{\mathbf{v} \in (L^2(\Omega))^d:\, \operatorname{div} \mathbf{v}=0, \ \mathbf{v}\cdot\mathbf{n}\big|_{\partial\Omega}=0\Big\},$$ where $\mathbf{n}$ is the unit outward drawn normal to $\partial\Omega$, and $\mathbf{v}\cdot\mathbf{n}\big|_{\partial\Omega}$ should be understood in the sense of trace in $\mathbb{H}^{-1/2}(\partial\Omega)$ (cf. Theorem 1.2, Chapter 1, \cite{Temam}),   with the norm  $\|\mathbf{v}\|_{\mathbb{H}}^2:=\int_{\Omega}|\mathbf{v}({x})|^2 \,d {x},$
$$
\mathbb{V}=\Big\{\mathbf{v}\in (W_0^{1,2}(\Omega))^d:\, \operatorname{div} \mathbf{v}=0 \Big\},
$$  
with the norm $\|\mathbf{v}\|_{\mathbb{V}}^2:=\int_{\Omega}|\nabla \mathbf{v}({x})|^2 \,d {x},$ 
$$
\widetilde{\mathbb{L}}^m= \Big\{\mathbf{v}\in (L^m(\Omega))^d:\, \operatorname{div} \mathbf{v}=0, \  \mathbf{v} \cdot \mathbf{n} \big|_{\partial\Omega}=0 \Big\},
$$ 
with the norm $\|\mathbf{v}\|_{\widetilde{\mathbb{L}}^m}^m:=\int_{\Omega}|\mathbf{v}({x})|^m \,d {x}$, and 
$$
\widetilde{\mathbb{L}}^\infty=\Big\{\mathbf{v}\in (L^\infty(\Omega))^d:\, \operatorname{div} \mathbf{v}=0, \  \mathbf{v}\cdot \mathbf{n} \big|_{\partial\Omega}=0 \Big\},
$$ 
where $\|\mathbf{v}\|_{\widetilde{\mathbb{L}}^\infty}:=\operatorname*{esssup}\limits_{{x} \in \Omega}|\mathbf{v}({x})|$, respectively. 

Let $(\cdot,\cdot)$ denote the inner product in the Hilbert space $\mathbb{H}$ and $\langle\cdot,\cdot\rangle$ represent the duality pairing between the spaces $\mathbb{V}$ and its dual $\mathbb{V}^{\prime}$. Integration by parts gives 
\begin{align*}
\big((\mathbf{u}\cdot\nabla)\mathbf{v},\mathbf{w}\big) =  -\big((\mathbf{u}\cdot\nabla)\mathbf{w},\mathbf{v}\big), \ \text{ for all } \mathbf{u},\mathbf{v},\mathbf{w}\in \mathbb{V},
\end{align*}
and 
\begin{align*}
\big((\mathbf{u}\cdot\nabla)\mathbf{v},\mathbf{v}\big) = 0, \ \text{ for all } \mathbf{u},\mathbf{v} \in\mathbb{V}.
\end{align*}
We also observe that
\begin{align}\label{Convective}
(\mathbf{v} \cdot \nabla)\mathbf{v}=  \operatorname{div}(\mathbf{v} \otimes\mathbf{v}), \ \text{ for all } \mathbf{v} \in\mathbb{V}.
\end{align}
The following well-known Ladyzhenskaya's inequality is given in \cite[Lemma 1]{Ladyzhenskaya} and \cite[Lemma 5.2, p. 106]{Seregin}:
\begin{equation}
\label{Leq}
\|\mathbf{v}\|_{\widetilde{\mathbb{L}}^4}\leq C\|\mathbf{v}\|_{\mathbb{H}}^{1-\frac{d}4}\|\nabla\mathbf{v}\|_{\mathbb{H}}^{\frac{d}4}, \
\text{ for all }\mathbf{v} \in\mathbb{V},\,\,d=2,3,
\end{equation}
and Poincar\'e's inequality
\begin{align*}
\|\mathbf{v}\|_{\mathbb{H}}\leq\frac{1}{\sqrt{\lambda_1}}\|\nabla\mathbf{v}\|_{\mathbb{H}},  \ \ \text{ for all }  \mathbf{v} \in\mathbb{V},
\end{align*}
where $\lambda_1$ is the smallest eigenvalue of the Stokes operator, is used repeatedly in this study. 

We need two compactness results. Here, the double hook-arrow $\hookrightarrow \hookrightarrow$' denotes that the embedding is compact, and the one hook-arrow $\hookrightarrow$' denotes that the embedding is continuous.

\begin{lemma}[The Aubin--Lions Lemma. $C^0$ version] 
\label{ALSlemma-1}
See \cite[Theorem A.52]{Bedrossian}. Let $X\hookrightarrow\hookrightarrow Y\hookrightarrow Z$ be separable, reflexive Banach spaces, such that the embedding $X\hookrightarrow\hookrightarrow Y$ is compact, and the embedding $Y\hookrightarrow Z$ is continuous. Let $T>0$, and assume that we have a sequence of functions $\{u_n\}_{n \ge1}$ such that there is an $M>0$,
\begin{itemize}
\item $\|u_n(t)\|_{X}\le M$,
\item $\{u_n\}$ is uniformly equicontinuous on $[0,T]$ with values in $Z$.
\end{itemize}
Then, the sequence $\{u_n\}$ is pre-compact in $C([0,T];Y)$.
\end{lemma}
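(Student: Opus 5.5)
The idea is to combine the Arzelà--Ascoli theorem for functions with values in a metric space with the compactness of $X\hookrightarrow\hookrightarrow Y$ and an Ehrling-type interpolation inequality. That inequality turns equicontinuity in $Z$ into equicontinuity in $Y$. The Arzelà--Ascoli version we use says: a family in $C([0,T];Y)$ is precompact if it is uniformly equicontinuous with values in $Y$ and, for every $t$ in a dense set, the values $\{u_n(t)\}$ are precompact in $Y$.

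The first step is the Ehrling lemma. For every $\varepsilon>0$ there is $C_\varepsilon>0$ such that
\begin{equation*}
\|u\|_Y\le \varepsilon\|u\|_X+C_\varepsilon\|u\|_Z \quad\text{for all } u\in X.
\end{equation*}
We argue by contradiction. Suppose there are $\varepsilon_0>0$ and $w_k\in X$ with $\|w_k\|_X=1$ and $\|w_k\|_Y>\varepsilon_0+k\|w_k\|_Z$. Since $Y\hookrightarrow Z$ is continuous, $\|w_k\|_Y$ is bounded, so $\|w_k\|_Z\to0$. By compactness of $X\hookrightarrow\hookrightarrow Y$, a subsequence converges in $Y$ to some $w$. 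Continuity of $Y\hookrightarrow Z$ then gives $w_k\to w$ in $Z$, hence $w=0$. This contradicts $\|w\|_Y\ge\varepsilon_0$. (I use $Y\hookrightarrow Z$ injective, as is implicit in calling it an embedding.)

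The second step is equicontinuity in $Y$. We apply the inequality to $u_n(t)-u_n(s)$, noting that $\|u_n(t)-u_n(s)\|_X\le 2M$. This gives
\begin{equation*}
\|u_n(t)-u_n(s)\|_Y\le 2M\varepsilon+C_\varepsilon\|u_n(t)-u_n(s)\|_Z .
\end{equation*}
Fix $\eta>0$ and choose $\varepsilon=\eta/(4M)$. By the uniform equicontinuity in $Z$, there is $\delta>0$, independent of $n$, such that $|t-s|<\delta$ implies $C_\varepsilon\|u_n(t)-u_n(s)\|_Z<\eta/2$. Hence $\{u_n\}$ is uniformly equicontinuous with values in $Y$, and in particular $u_n\in C([0,T];Y)$.

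The third step is pointwise precompactness and the conclusion. For each $t$ the set $\{u_n(t)\}$ is bounded in $X$ by $M$, so it is precompact in $Y$ by the compact embedding. We take a countable dense set $\{t_k\}\subset[0,T]$ and use a diagonal argument to extract a subsequence with $u_n(t_k)$ converging in $Y$ for every $k$. Combined with the uniform equicontinuity in $Y$, the standard $\varepsilon/3$ argument shows this subsequence is Cauchy in $C([0,T];Y)$. That space is complete, so the subsequence converges and the claim follows.

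The main obstacle is the first step, the Ehrling inequality. This is the only place where the compactness of $X\hookrightarrow\hookrightarrow Y$ interacts with the weaker $Z$-topology. It needs care with injectivity of the embedding and with the boundedness of $\|w_k\|_Y$. Separability and reflexivity are not really needed beyond this point.
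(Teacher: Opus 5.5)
The paper gives no proof of this lemma; it only quotes it from Bedrossian--Vicol. Your argument is the standard proof and is correct: Ehrling's inequality upgrades $Z$-equicontinuity to $Y$-equicontinuity, and Arzel\`a--Ascoli then follows via pointwise precompactness from $X\hookrightarrow\hookrightarrow Y$ plus a diagonal argument. There is one misattribution in Step 1: $\|w_k\|_Y$ is bounded because $\|w_k\|_X=1$ and $X\hookrightarrow Y$ is continuous (a compact embedding is bounded), not because $Y\hookrightarrow Z$ is continuous; you need the latter only afterwards, to carry $w_k\to w$ over into $Z$.
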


\begin{lemma}[The Aubin--Lions Lemma. $L^p$ version]
\label{ALSlemma}
See \cite[Theorem A.53]{Bedrossian}.  Let $X\hookrightarrow\hookrightarrow Y\hookrightarrow Z$ be separable reflexive Banach spaces. Let $T>0$, and assume that we have a sequence of functions $\{u_n\}_{n\geq1}$ such that
\begin{itemize}
\item $\{u_n\}$ is uniformly bounded in $L^p(0,T;X)$,
\item $\{\partial_tu_n\}$ is uniformly bounded in $L^1(0,T;Z)$,
\end{itemize}
where $1<p<\infty$. Then, the sequence $\{u_n\}$ is (strongly) pre-compact in $L^p(0,T;Y)$.
\end{lemma}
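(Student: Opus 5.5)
This is the Aubin--Lions lemma in its $L^p$ version, quoted from \cite[Theorem A.53]{Bedrossian}. The plan is to reduce it to the $C^0$ version (Lemma \ref{ALSlemma-1}) by time-averaging, combined with Ehrling's interpolation inequality. Throughout, $u_n$ is read as an element of $W^{1,1}(0,T;Z)$, so that $u_n$ is absolutely continuous with values in $Z$.

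\emph{Step 1: Ehrling's inequality.} Since $X\hookrightarrow\hookrightarrow Y\hookrightarrow Z$, for every $\eta>0$ there is $C_\eta$ with
\begin{equation*}
\|u\|_Y\le \eta\|u\|_X+C_\eta\|u\|_Z, \qquad u\in X.
\end{equation*}
This follows by contradiction. Suppose it fails. Then there are $u_k$ with $\|u_k\|_X=1$ and $\|u_k\|_Y>\eta+k\|u_k\|_Z$. By compactness, a subsequence converges in $Y$, and since $\|u_k\|_Z\le \|u_k\|_Y/k\to0$ the limit is zero in $Z$. Injectivity of the embedding $Y\hookrightarrow Z$ then gives the limit $0$ in $Y$, which contradicts $\|u_k\|_Y>\eta$. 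Consequently it suffices to prove that $\{u_n\}$ is precompact in $L^p(0,T;Z)$. Indeed, for a subsequence that is Cauchy in $L^p(0,T;Z)$,
\begin{equation*}
\|u_n-u_m\|_{L^p(Y)}\le 2\eta M+C_\eta\|u_n-u_m\|_{L^p(Z)},
\end{equation*}
so the subsequence is Cauchy in $L^p(0,T;Y)$.

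\emph{Step 2: Time averages.} For small $h>0$ set $u_n^h(t)=\frac1h\int_t^{t+h}u_n(s)\,ds$ on $[0,T-h]$. Hölder's inequality gives $\|u_n^h(t)\|_X\le h^{-1/p}\|u_n\|_{L^p(X)}\le h^{-1/p}M$, so $u_n^h$ is bounded in $X$ uniformly in $n$ and $t$. Its derivative is $\partial_t u_n^h=(u_n(t+h)-u_n(t))/h$, whose $Z$-norm is at most $2h^{-1}\sup_t\|u_n(t)\|_Z$. This supremum is bounded uniformly in $n$: $\|u_n\|_{C(Z)}$ is controlled by $\|u_n\|_{L^1(Z)}+\|\partial_tu_n\|_{L^1(Z)}$, and both are bounded. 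Hence, for fixed $h$, $\{u_n^h\}$ is Lipschitz in $Z$ uniformly in $n$, and Lemma \ref{ALSlemma-1} yields precompactness in $C([0,T-h];Y)$, hence in $L^p(0,T-h;Z)$.

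\emph{Step 3: Approximation.} I will show that $\|u_n-u_n^h\|_{L^p(0,T-h;Z)}\to0$ as $h\to0$ uniformly in $n$; this is the main obstacle. The starting point is the pointwise bound
\begin{equation*}
\|u_n(t)-u_n^h(t)\|_Z\le\int_t^{t+h}\|\partial_tu_n\|_Z\,ds=:g_n^h(t).
\end{equation*}
Only an $L^1$ bound on $\partial_tu_n$ is available, so $g_n^h$ need not be small pointwise. Integration gives $\|g_n^h\|_{L^1}\le h\|\partial_tu_n\|_{L^1(Z)}\le Ch$, and also $\|g_n^h\|_{L^\infty}\le \|\partial_t u_n\|_{L^1(Z)}\le C$. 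Interpolating between these two bounds yields $\|g_n^h\|_{L^p}\le C h^{1/p}$ uniformly in $n$.

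\emph{Step 4: Conclusion.} On the final piece $[T-h,T]$, the uniform $C(Z)$ bound gives an $L^p(Z)$ contribution of order $h^{1/p}$. Therefore $\{u_n\}$ lies within distance $O(h^{1/p})$ in $L^p(0,T;Z)$ of a precompact set, for every $h>0$. This makes $\{u_n\}$ totally bounded, hence precompact in $L^p(0,T;Z)$, and Step 1 transfers this to $L^p(0,T;Y)$.
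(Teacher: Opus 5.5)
Your proof is correct. The paper gives no argument of its own here: the lemma is imported from \cite[Theorem A.53]{Bedrossian} and used as a black box in Section~\ref{sec3.3}. So the comparison is between a citation and your self-contained proof, which is essentially Simon's classical argument.

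Your proof has three ingredients:
\begin{itemize}
\item Ehrling's inequality reduces precompactness in $L^p(0,T;Y)$ to precompactness in $L^p(0,T;Z)$.
\item For fixed $h$, the Steklov averages $u_n^h$ are bounded in $X$ pointwise in $t$ and uniformly Lipschitz in $Z$, so Lemma~\ref{ALSlemma-1} applies to them.
\item The weak $L^1$ control on $\partial_t u_n$ enters only in Step~3. There the interpolation $\|g_n^h\|_{L^p}\le\|g_n^h\|_{L^\infty}^{1-1/p}\|g_n^h\|_{L^1}^{1/p}\le Ch^{1/p}$ gives the approximation uniformly in $n$.
\end{itemize}
In Step~4 you implicitly extend $u_n^h$ by zero on $(T-h,T]$ to compare in $L^p(0,T;Z)$; this is harmless.

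Compared with the citation, your route is self-contained and shows that the $L^p$ version follows from the $C^0$ version stated just before it in the paper. It also shows the statement holds under weaker hypotheses:
\begin{itemize}
\item Your argument itself uses neither reflexivity nor separability. These enter only through the hypotheses of Lemma~\ref{ALSlemma-1} as quoted, and that lemma's Arzel\`a--Ascoli proof does not need them either.
\item The same estimates go through unchanged for $p=1$.
\end{itemize}
The citation buys brevity, which is all the paper needs, since the lemma is only a tool there.
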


\subsection{\bf Inverse problem}
\vspace{2mm}
In the present paper, we study the family of inverse problems depending on $n \in \mathbb{N}$
\begin{equation}
\label{eq2.1}
\begin{cases}
&\begin{split} &\partial_t\mathbf{v}_{n}-\mu \,\mathrm{div}\big(|\mathbf{D}(\mathbf{v}_n)|^{p-2}\mathbf{D}(\mathbf{v}_n)\big)  +
\mathrm{div}(\mathbf{v}_{n}\otimes\mathbf{v}_{n})
-\varkappa\Delta \partial_t \mathbf{v}_n+\nabla\pi_n
\\
& \qquad=\varphi_n(t)\,\left(\boldsymbol{\omega}(x)-\varkappa\Delta\boldsymbol{\omega}(x)\right) \ \
\text{ in }Q_{T_n},
\end{split}
\\
&\operatorname{div}\mathbf{v}_n=0 \ \ \text{ in }Q_{T_n},
\\
&\int_\Omega \mathbf{v}_n(x,t)\cdot(\boldsymbol{\omega}(x)-\varkappa\Delta\boldsymbol{\omega}(x))\,dx=j_n(t) \ \ \text{ in }(0,{T_n}),
\\
&\mathbf{v}_n(x,0)=\mathbf{v}_0(x) \ \ \text{ in }\Omega,
\\
&\mathbf{v}_n(x,t)=\boldsymbol{0} \ \ \text{ on }\Gamma_{T_n},
\end{cases}
\end{equation}
where $\mathbf{v}_n$ is the unknown velocity field, $\pi_n$ is the unknown pressure field, $\varphi_n(t)$ is an unknown function, and $j_n(t)$ is an approximation of a given function $j(t)$ with a first order discontinuity at $t=0$, $T_n:={T+\frac1n}$ is a given positive constant, $Q_{T_n}:=\Omega\times(0,{T_n})$, $\Omega$ is a bounded domain of $\mathbb{R}^d$,  and $\Gamma_{T_n}:=\partial\Omega\times(0,T_n)$, with $\partial\Omega$ denotes the boundary of $\Omega$.

The nonlinear inverse problem consists of finding a set of the functions $(\mathbf{v}_n,\varphi_n)$, which satisfies the system \eqref{eq2.1}.
\begin{definition}
\label{def1.1}
A pair of functions $(\mathbf{v}_n,\varphi_n)$ is said to be a weak solution of the nonlinear inverse problem \eqref{eq2.1} if
\begin{align*}
\mathbf{v}_n\in L^\infty(0,T;\mathbb{V} \cap \mathbb{V}_p)\cap L^p(0,T;\mathbb{V}_p),
\quad\partial_t\mathbf{v}_n
\in L^2(0,T;\mathbb{V}),
\,\, \quad \varphi_n\in L^2(0,T),
\end{align*}
and they satisfy the integral identity for any $\boldsymbol{\phi}\in\mathbb{V}$
\begin{align}\label{eq2.02}
& \frac{d}{dt}\int_\Omega \Big(\mathbf{v}_n(x,t)\cdot\boldsymbol{\phi}(x)
+\varkappa\nabla\mathbf{v}_n(x,t):\nabla\boldsymbol{\phi}(x)\Big)\,dx
+\mu\int_\Omega |\mathbf{D}(\mathbf{v}_n)(x,t)|^{p-2}\mathbf{D}(\mathbf{v}_n)(x,t):\nabla\boldsymbol{\phi}(x)\,dx
\nonumber\\
&
\quad
-\int_\Omega \mathbf{v}_n(x,t)\otimes\mathbf{v}_n(x,t):\nabla\boldsymbol{\phi}(x)\,dx=\varphi_n(t)\int_\Omega
(\boldsymbol{\omega}(x)-\varkappa\Delta\boldsymbol{\omega}(x))\cdot\boldsymbol{\phi}(x)\,dx
\end{align}
along with the initial condition 
\begin{align*}
&\mathbf{v}_n(x,0)=\mathbf{v}_0(x),
\end{align*}
and the over-determination condition
\begin{align*}
&\int_\Omega 
\mathbf{v}_n(x,t)\cdot\left(\boldsymbol{\omega}(x)-\varkappa\Delta\boldsymbol{\omega}(x)\right)\,dx=j_n(t),
\end{align*}
with a corresponding compatibility condition
\begin{align*}
&\int_\Omega\mathbf{v}_0(x)\cdot\left(\boldsymbol{\omega}(x)-\varkappa\Delta\boldsymbol{\omega}(x)\right)\,dx=j_n(0).
\end{align*}
\end{definition}
\begin{remark} The identity \eqref{eq2.02} can be rewritten in integral form as follows:
\begin{align}\label{eq2.04}
&\int_{Q_{T_n}}  \Big(\partial_t \mathbf{v}_n \cdot \boldsymbol{\eta} +\varkappa \nabla\partial_t \mathbf{v}_n : \nabla \boldsymbol{\eta} +  \mu \, |\mathbf{D}(\mathbf{v}_n)|^{p-2}\mathbf{D}(\mathbf{v}_n) : \nabla \boldsymbol{\eta} -(\mathbf{v}_n \otimes \mathbf{v}_n) : \nabla \boldsymbol{\eta} \nonumber \\&\quad-
\varphi_n(t) \left(\boldsymbol{\omega}(x)-\varkappa\Delta\boldsymbol{\omega}(x)\right) \cdot \boldsymbol{\eta} \Big)\,d{x} dt=0,
\end{align}
for a smooth finite test function $\boldsymbol{\eta}$.
\end{remark}

\subsection{\bf Implied Conditions}
\vspace{2mm}
Here, we imply several conditions on the coefficients. Some of such conditions are important for passing the limit as $n\to\infty$. In the following conditions, along with the function $j_n(t)$, we define the functions  $\overline{j}_n({\overline{t}})=j_n(\frac{{\overline{t}}}n)$, ${\overline{t}}\in(0,1)$, and $\widetilde{j}_n(\widetilde{t})=j_n(\frac1n+\widetilde{t})$, $\widetilde{t}\in (0,T)$.
\begin{conditions}
\label{C}
We deal with conditions ${\bf C}_{1}$--${\bf C}_{9}$:
\begin{itemize}
\item[${\bf C}_{1}.$] Exponent $p \in (1,2]$.
\item[${\bf C}_{2}.$] The initial velocity $\mathbf{v}_0$ belongs to $\mathbb{V} \cap \mathbb{V}_p$ for a weak solution, see Definition \ref{def1.1}.
\item[${\bf C}_{3}.$] The assumption on the function $\boldsymbol{\omega}$:
\[
\boldsymbol{\omega}\in\mathbb{V}_p, 
\quad \nabla\boldsymbol{\omega}\in \widetilde{\mathbb{L}}^\infty, \quad\int_{\Omega}\big(|\boldsymbol{\omega}({x})|^2 +\varkappa|\nabla \boldsymbol{\omega}({x})|^2 \big)\,d{x}=1.
\]
\item[${\bf C}_{4}.$] The sequence of smooth functions $j_n(t)$ approximates the discontinuous function $j(t)$ and satisfies the conditions $j_n(0)=j(0)$, $j_n(t)\equiv j(t)$ for all $t\in[\frac1n,T_n]$. Therefore, $\mathrm{supp}(j_n-j)=[0,\frac1n].$ We assume that $j(t)$ is continuous on $(0,T_n]$, and there exists a right-sided limit $j(0^+)\neq j(0)$.
\item[${\bf C}_{5}.$] The inequality $\int_{0}^{T_n}|j_n'(t)|\,dt\le J\in\mathbb R^+$ holds for every $n\in\mathbb{N}$, where $J$ does not depend on $n$.
\item[${\bf C}_{6}.$] The sequence $\{\overline{j}_n\}_{n\geq n_0}$ converges strongly in $C([0,1])$ to $\overline{j}$, $\overline{j}_n(\overline{t})=j_n({\overline{t}}/n)$.
\item[${\bf C}_{7}.$] The sequence $\{\overline{j}_n'\}_{n\geq n_0}$ converges strongly in $L^2(0,1)$ to $\overline{j}'$.
\item[${\bf C}_{8}.$] The inequality $\left(\int_0^T|\widetilde{j}_n'(\widetilde{t})|^2\,d\widetilde{t}\right)^{\frac{1}{2}}\le J_1\in\mathbb R^+$ 
holds for every $n\in\mathbb{N}$, where $J_1$ does not depend on $n$.
\item[${\bf C}_{9}.$] The sequence $\{\widetilde{j}_n'\}_{n\geq n_0}$ converges strongly in  $L^2(0,T)$ to $j'$. 
\end{itemize}
\end{conditions}
\begin{remark}
\label{rem}
The sequence $j_n'(t)$ is bounded in $L^1(0,1/n)$. This means that for every test function $\phi\in C([0,T])$
\[
\int_0^{1/n}\phi(t)j_n'(t) dt \to \alpha\phi(0)\text{ as }n\to\infty,
\]
where $\alpha\in\mathbb{R}\setminus\{0\}$ is a constant.
This means that $j_n'(t)$ tends in weak-$^*$ sense as $n\to\infty$ to the Dirac delta function $\delta_{(t=0)}$ multiplied by $\alpha$. If $\int_0^{1/n}j_n'(t)\,dt=1$ for all $n\in\mathbb{N}$, then $\alpha=1$.  The trivial case is to take  $j_n(t)=nt\mathbf{1}_{[0,1/n]}(t)+\mathbf{1}_{(1/n,\infty)}(t)$ and, correspondingly, $j_n'(t)=n\mathbf{1}_{[0,1/n]}(t)$. See example \eqref{ex}.
\end{remark}
\begin{remark}
The compatibility conditions hold
\begin{align*}
&j_n(0)=\int_\Omega \mathbf{v}_0 ({x})\cdot \left(\boldsymbol{\omega}(x)-\varkappa\Delta\boldsymbol{\omega}(x)\right)\,d{x},  \ \ \text{and} \\&
\overline{j}_n(1)=\widetilde{j}_n(0)=\int_\Omega\overline{\mathbf{v}}_n({x},1)\cdot \left(\boldsymbol{\omega}(x)-\varkappa\Delta\boldsymbol{\omega}(x)\right)\,d{x}=\int_\Omega \widetilde{\mathbf{v}}_n({x},0) \cdot \left(\boldsymbol{\omega}(x)-\varkappa\Delta\boldsymbol{\omega}(x)\right)\,d{x}.
\end{align*}
\end{remark}

\subsection{\bf Functional direct problem} 
\vspace{2mm}
To prove the existence and uniqueness of a solution to problem \eqref{eq2.1}, we will use the method of reducing this problem to a direct problem but with functional coefficients.
This method has been widely used in \cite{Prilepko}. Let us consider the following functional direct problem, which consists of finding a pair of functions $(\mathbf{v}_n,\pi_n)$ satisfying
\begin{equation}
\label{eq2.1'A}
\begin{cases}
&\begin{split} &\partial_t\mathbf{v}_n-\mu \, \mathrm{div}\big(|\mathbf{D}(\mathbf{v}_n)|^{p-2}\mathbf{D}(\mathbf{v}_n)\big) -\varkappa\Delta \partial_t \mathbf{v}_n+
(\mathbf{v}_{n} \cdot \nabla)\mathbf{v}_{n}
+\nabla\pi_n
\\& \qquad=\varphi_n(t,[\mathbf{v}_n])\,\left(\boldsymbol{\omega}(x)
-\varkappa\Delta\boldsymbol{\omega}(x)\right) \ \ \text{ in }Q_{T_n},
\end{split}
\\
&\operatorname{div}\mathbf{v}_n=0 \ \ \text{ in }Q_{T_n},
\\
&\mathbf{v}_n({x},0)=\mathbf{v}_0({x}) \ \ \text{ in }\Omega,
\\
&\mathbf{v}_n({x},t)=\boldsymbol{0} \ \ \text{ on }\Gamma_{T_n},
\\
&\int_\Omega\pi_n({x},t)\,d{x}=0 \ \ \text{ in }(0,T_n),
\end{cases}
\end{equation}
where $\mathbf{v}_0({x})$ is a given function and the function $\varphi_n(t)$  is determined according to the formula
\begin{align}\label{phi_n}
\varphi_n(t,[\mathbf{v}_n])&=\mu\int_\Omega
|\mathbf{D}(\mathbf{v}_n)|^{p-2}\mathbf{D}(\mathbf{v}_n)
:\nabla\boldsymbol\omega\,dx -\int_\Omega
(\mathbf{v}_n\cdot\nabla)\boldsymbol\omega
\cdot\mathbf{v}_n\,dx+j_n'(t).
\end{align}
Let ${\mathbf{v}_n}(x,t)$ be the solution to problem \eqref{eq2.1'A}. We multiply the first equation in \eqref{eq2.1} by the function $\boldsymbol{\omega}({x})$ and integrate by parts over $\Omega$. Then we arrive to 
\begin{align}
\varphi_n(t,[\mathbf{v}_n])&=
\mu\int_\Omega|\mathbf{D}(\mathbf{v}_n)|^{p-2}\mathbf{D}(\mathbf{v}_n):\nabla\boldsymbol\omega\,dx 
-\int_\Omega((\mathbf{v}_n\cdot\nabla)\boldsymbol\omega)\cdot\mathbf{v}_n\,dx\nonumber
\\
&\quad+\frac{d}{dt}\int_{\Omega}\mathbf{v}_n\cdot
\left(\boldsymbol\omega(x)-\varkappa\Delta\boldsymbol{\omega}(x)
\right)\,dx\label{ph i_n}
\end{align}
From equations   \eqref{phi_n} and \eqref{ph i_n}, we get
$$\frac{d}{dt}\int_{\Omega}\mathbf{v}_n(x,t)
\left(\boldsymbol\omega(x)-\varkappa\Delta\boldsymbol{\omega}(x)\right)dx=j_n'(t).
$$ 
Integrating the last one and taking into account  the initial condition
$$\int_{\Omega}\mathbf{v}_n(x,0)\left(\boldsymbol\omega(x)-\varkappa\Delta\boldsymbol{\omega}(x)
\right)dx=j_n(0),
$$ 
we obtain
$$\int_{\Omega}\mathbf{v}_n(x,t)
\left(\boldsymbol\omega(x)-\varkappa\Delta\boldsymbol{\omega}(x)
\right)dx=j_n(t)\, \, \text{ for all}\,\, t\,\in(0,T_n).
$$ 
This  means that a
solution of  the direct problem \eqref{eq2.1'A} satisfies the integral overdetermination condition.

The following lemma establishes a correspondence between the inverse problem \eqref{eq2.1} and the functional direct problem \eqref{eq2.1'A}-\eqref{phi_n}.
\begin{lemma}\label{le:1}
Let $\left(\mathbf{v}_{n},\varphi_{n}\right)$ be an  arbitrary weak solution to the inverse problem \eqref{eq2.1}. Then $\mathbf{v}_{n}$ is a weak solution to the functional direct problem \eqref{eq2.1'A} with  the function $\varphi_n(t,[\mathbf{v}_{n}])$ defined by the formula \eqref{phi_n}. Respectively, vice versa, every weak solution to {the nonlocal problem \eqref{eq2.1'A}-\eqref{phi_n}} is a solution of inverse problem \eqref{eq2.1}, i.e., it satisfies the integral  condition of over-determination, see third line in \eqref{eq2.1}. Moreover, the uniqueness of the solution to the inverse problem \eqref{eq2.1} implies the uniqueness of the solutions of the { problem \eqref{eq2.1'A}-\eqref{phi_n}} and  vice versa.
\end{lemma}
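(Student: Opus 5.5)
The plan is to test the weak formulation \eqref{eq2.02} with the admissible function $\boldsymbol{\phi}=\boldsymbol{\omega}$. This function lies in $\mathbb{V}$: by ${\bf C}_3$ it is in $\mathbb{V}_p$ with bounded gradient, hence in $\mathbb{V}$ on a bounded domain. We will also use the normalization $\int_\Omega(|\boldsymbol\omega|^2+\varkappa|\nabla\boldsymbol\omega|^2)\,dx=1$.

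\emph{Direction 1 (inverse $\Rightarrow$ direct).} Let $(\mathbf{v}_n,\varphi_n)$ be a weak solution of \eqref{eq2.1}. Integrating by parts in $\Omega$ gives
\[
\int_\Omega(\mathbf{v}_n\cdot\boldsymbol\omega+\varkappa\nabla\mathbf{v}_n:\nabla\boldsymbol\omega)\,dx=\int_\Omega\mathbf{v}_n\cdot(\boldsymbol\omega-\varkappa\Delta\boldsymbol\omega)\,dx=j_n(t).
\]
This step needs care when $\Delta\boldsymbol\omega$ is only a distribution, so the pairing should be understood in the $\mathbb{V}'$--$\mathbb{V}$ sense. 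With $\boldsymbol\phi=\boldsymbol\omega$ the right-hand side of \eqref{eq2.02} equals $\varphi_n(t)$, again by the normalization. For the convective term, the antisymmetry identity and \eqref{Convective} give $\int_\Omega\mathbf{v}_n\otimes\mathbf{v}_n:\nabla\boldsymbol\omega\,dx=\int_\Omega((\mathbf{v}_n\cdot\nabla)\boldsymbol\omega)\cdot\mathbf{v}_n\,dx$. Hence, for a.e. $t$, $\varphi_n$ coincides with formula \eqref{phi_n}, with $\frac{d}{dt}j_n=j_n'$. Substituting back shows that $\mathbf{v}_n$ satisfies \eqref{eq2.1'A} weakly with $\varphi_n(t,[\mathbf{v}_n])$. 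For the regularity, $\varphi_n\in L^2$ follows from $|\mathbf{D}(\mathbf{v}_n)|^{p-1}\in L^\infty(0,T;L^{p'})$, $\nabla\boldsymbol\omega\in L^\infty$, $\mathbf{v}_n\in L^\infty(0,T;\mathbb{V})$, and smoothness of $j_n$.

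\emph{Direction 2 (direct $\Rightarrow$ inverse).} Let $\mathbf{v}_n$ be a weak solution of \eqref{eq2.1'A}--\eqref{phi_n}, and define $\varphi_n(t):=\varphi_n(t,[\mathbf{v}_n])\in L^2$. Testing its weak form with $\boldsymbol\phi=\boldsymbol\omega$ and comparing with \eqref{phi_n}, the nonlinear terms cancel and we are left with $\frac{d}{dt}\int_\Omega\mathbf{v}_n\cdot(\boldsymbol\omega-\varkappa\Delta\boldsymbol\omega)\,dx=j_n'(t)$ a.e. Since $\partial_t\mathbf{v}_n\in L^2(0,T;\mathbb{V})$, the map $t\mapsto\int_\Omega(\mathbf{v}_n\cdot\boldsymbol\omega+\varkappa\nabla\mathbf{v}_n:\nabla\boldsymbol\omega)\,dx$ is absolutely continuous. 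Integrating from $0$ and using the compatibility condition $\int_\Omega\mathbf{v}_0\cdot(\boldsymbol\omega-\varkappa\Delta\boldsymbol\omega)\,dx=j_n(0)$ gives the over-determination condition for all $t$. Thus $(\mathbf{v}_n,\varphi_n)$ is a weak solution of \eqref{eq2.1}; this is the computation already displayed before the lemma, made rigorous.

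\emph{Uniqueness.} The two maps are mutually inverse: $(\mathbf{v}_n,\varphi_n)\mapsto\mathbf{v}_n$ and $\mathbf{v}_n\mapsto(\mathbf{v}_n,\varphi_n(\cdot,[\mathbf{v}_n]))$. Direction 1 shows that $\varphi_n$ is uniquely determined by $\mathbf{v}_n$ through \eqref{phi_n}. So two distinct solutions of one problem would produce two distinct solutions of the other, and uniqueness transfers in both directions.

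The main obstacle is justifying the a.e. differentiation in time and the identification of $\int\mathbf{v}_n\cdot(\boldsymbol\omega-\varkappa\Delta\boldsymbol\omega)$ with the $\mathbb{V}$-inner product pairing under the weak regularity of $\boldsymbol\omega$. I would handle it by reading \eqref{eq2.02} in the integrated form \eqref{eq2.04} with test functions $\boldsymbol\eta=\psi(t)\boldsymbol\omega(x)$, $\psi\in C_0^\infty(0,T_n)$. This gives the identities in $\mathcal{D}'(0,T_n)$, and then $L^2$-regularity upgrades them to a.e. identities.
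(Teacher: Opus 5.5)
Your proposal is correct and takes essentially the same route as the paper. The paper omits the proof and cites \cite[Lemma 3.1]{AntKh}, but the computation it displays just before the lemma is yours: test with $\boldsymbol{\omega}$, use the normalization in ${\bf C}_3$, compare with \eqref{phi_n} to get $\frac{d}{dt}\int_\Omega\mathbf{v}_n\cdot(\boldsymbol\omega-\varkappa\Delta\boldsymbol\omega)\,dx=j_n'(t)$, and integrate using the compatibility condition at $t=0$. Your extra details (the $\mathbb{V}'$--$\mathbb{V}$ reading of $\Delta\boldsymbol\omega$, absolute continuity from $\partial_t\mathbf{v}_n\in L^2(0,T_n;\mathbb{V})$, and the explicit bijection for transferring uniqueness) fill in steps the paper leaves implicit.
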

\noindent{\bf Proof.}
A similar lemma is proved in \cite[Lemma 3.1]{AntKh}, and so we omit it here.
\null\nobreak\hfill\ensuremath{\square}

Taking into account the above lemma, we will further consider the problem  \eqref{eq2.1'A}-\eqref{phi_n}.

\section{The existence result of a weak solution of inverse problem (\ref{eq2.1}) for a fixed $n\in\mathbb{N}$}.\label{sec3}

In this section, we prove the existence of weak  solutions of the functional direct problem \eqref{eq2.1'A}-\eqref{phi_n}, which, due to Lemma \ref{le:1}, holds true for the inverse problem \eqref{eq2.1}.
\begin{theorem}[\textbf{Global in time existence}]
\label{Theorem.2.1} Under Conditions \ref{C}, the problem \eqref{eq2.1} and, respectively, \eqref{eq2.1'A}-\eqref{phi_n} have at least one global in time solution in the sense of Definition \ref{def1.1}. The solution satisfies the following energy estimates
\begin{align}
\label{eq:first-int}
&\underset{t\in(0,T_n)}{\rm esssup}\Big(\|\mathbf{v}_n(\cdot,t)\|_{\mathbb{H}}^2+
\|\nabla \mathbf{v}_n(\cdot,
t)\|_{\mathbb{H}}^2\Big)+\|\nabla\mathbf{v}_n\|_{L^p(0,T_n;\widetilde{\mathbb{L}}^p)}^p
+\|\varphi_n\|_{L^1(0,T_n)}
\leq C,
\\
&\|\partial_t\mathbf{v}_n\|_{L^2(0,T_n;\mathbb{H})}^2+ \|\nabla\partial_t\mathbf{v}_n\|^2_{L^2(0,T_n;\mathbb{H})}+\underset{t\in(0,T_n)}{\rm esssup}\|\nabla\mathbf{v}_n(\cdot,t)\|_{\widetilde{\mathbb{L}}^p}^p
+\|\varphi_n\|_{L^2(0,T_n)}^2
\nonumber\\& \qquad\leq  \frac{\mu}{p}\|\mathbf{v}_0\|_{\mathbb{V}_{p}}^{p}+ C\left(1+\int_{0}^{T_n} |j_n'(t)|^2\,dt\right), \label{eq:second-int}
\\
&\|\partial_t\mathbf{v}_n\|_{L^1(0,T_n;\mathbb{H})}+ \|\nabla\partial_t\mathbf{v}_n\|_{L^1(0,T_n;\mathbb{H})} \leq C, \label{eq:third-int}
\end{align}
where the constant $C$ does not depend on $n$.
\end{theorem}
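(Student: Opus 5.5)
We construct solutions of the functional direct problem \eqref{eq2.1'A}--\eqref{phi_n} by a Faedo--Galerkin scheme and then invoke Lemma \ref{le:1}. Let $\{\mathbf{w}_k\}$ be the eigenfunctions of the Stokes operator, which are smooth and orthogonal in $\mathbb{H}$ and in $\mathbb{V}$. We look for $\mathbf{v}_n^m=\sum_{k\le m}c_k(t)\mathbf{w}_k$ satisfying \eqref{eq2.02} with $\boldsymbol\phi=\mathbf{w}_k$, $k\le m$. In this system $\varphi_n$ is replaced by the functional $\varphi_n(t,[\mathbf{v}_n^m])$ from \eqref{phi_n}, and $\mathbf{v}_n^m(0)$ is the projection of $\mathbf{v}_0$. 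Since the matrix of the $(\mathbf{v},\boldsymbol\phi)+\varkappa(\nabla\mathbf{v},\nabla\boldsymbol\phi)$ form is positive definite, the result is an ODE system for $c(t)$. Its right-hand side is continuous in $c$ because $p\le 2$ and $|\mathbf{D}|^{p-2}\mathbf{D}$ is continuous, and it is integrable in $t$ because $j_n'$ is smooth for fixed $n$. Carathéodory's theorem then gives local solutions. Global existence on $[0,T_n]$ follows from the a priori bounds below, and we pass to the limit $m\to\infty$ at the end.

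\emph{First estimate \eqref{eq:first-int}.} Test with $\mathbf{v}_n^m$ and put $E(t)=\|\mathbf{v}\|_{\mathbb{H}}^2+\varkappa\|\nabla\mathbf{v}\|_{\mathbb{H}}^2$. The convective term vanishes, and we get $\frac12E'+\mu\|\mathbf{D}(\mathbf{v})\|_p^p=\varphi_n(t)(\mathbf{v},\boldsymbol\omega-\varkappa\Delta\boldsymbol\omega)$. The key observation is that the Galerkin solution also satisfies the overdetermination condition exactly. This is the same computation as after \eqref{ph i_n}, provided $\boldsymbol\omega$ is included in the Galerkin space, or the projection is handled accordingly. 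Hence the right-hand side equals $\varphi_n j_n(t)$, and $|j_n|$ is bounded uniformly in $n$ by $|j(0)|+J$. We then estimate $|\varphi_n|$ from \eqref{phi_n} using $\nabla\boldsymbol\omega\in\widetilde{\mathbb L}^\infty$:
\[
|\varphi_n|\le \mu\|\nabla\boldsymbol\omega\|_\infty\|\mathbf{D}(\mathbf{v})\|_{p-1}^{p-1}+\|\nabla\boldsymbol\omega\|_\infty\|\mathbf{v}\|_{\mathbb{H}}^2+|j_n'|.
\]
Young's inequality absorbs $\|\mathbf{D}\|_{p-1}^{p-1}$ into $\mu\|\mathbf{D}\|_p^p$ plus a constant, and Korn's inequality controls $\|\nabla\mathbf v\|_p$ by $\|\mathbf{D}\|_p$. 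This gives $E'\le C(1+E)+C|j_n'|$. Gronwall's inequality together with ${\bf C}_5$, which gives $\int|j_n'|\le J$, yields a bound on $E$ independent of $n$. This is where only the $L^1$ norm of $j_n'$ may enter. Integrating the bound on $|\varphi_n|$ then controls $\|\varphi_n\|_{L^1}$.

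\emph{Second and third estimates.} Test with $\partial_t\mathbf{v}_n^m$. The result is $\|\partial_t\mathbf v\|^2+\varkappa\|\nabla\partial_t\mathbf v\|^2+\frac{\mu}{p}\frac{d}{dt}\|\mathbf D(\mathbf v)\|_p^p$ on the left, and $\varphi_n j_n'$ plus the convective term $\int(\mathbf v\otimes\mathbf v):\nabla\partial_t\mathbf v$ on the right. By Ladyzhenskaya's inequality \eqref{Leq} and the first estimate, the convective term is at most $C\|\nabla\partial_t\mathbf v\|$, which is absorbed. The term $\varphi_nj_n'$ is bounded by $C(1+|j_n'|)|j_n'|\le C(1+|j_n'|^2)$. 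Integrating gives \eqref{eq:second-int}; the $L^2$ bound on $\varphi_n$ follows from the pointwise bound on $|\varphi_n|$. For the $n$-uniform $L^1$ bound \eqref{eq:third-int}, we instead divide the pointwise inequality by $\|\partial_t\mathbf v\|_{\mathbb V}$, or equivalently use the $\mathbb{V}'$-dual representation of $\partial_t\mathbf{v}$. The operator $(I-\varkappa\Delta)^{-1}$ applied to the equation gives $\|\partial_t\mathbf v\|_{\mathbb V}\le C(1+|\varphi_n|)$. Here the term $\|\mathbf{D}\|_{p-1}^{p-1}$ is handled in $L^1$ in time through the bound on $\|\nabla\mathbf v\|_{L^p L^p}$. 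Integrating in time and using $\|\varphi_n\|_{L^1}\le C$ yields \eqref{eq:third-int}.

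\emph{Passage to the limit $m\to\infty$.} The bounds above are uniform in $m$, so we may extract subsequences: $\mathbf v^m\rightharpoonup^*\mathbf v$ in $L^\infty(\mathbb V\cap\mathbb V_p)$ and $\partial_t\mathbf v^m\rightharpoonup\partial_t\mathbf v$ in $L^2(\mathbb V)$. Lemma \ref{ALSlemma-1} with $\mathbb V\hookrightarrow\hookrightarrow\mathbb H$ gives strong convergence in $C([0,T_n];\mathbb H)$, which handles the convective terms in both the equation and $\varphi_n$. The main obstacle is identifying the limit of the monotone term $|\mathbf D(\mathbf v^m)|^{p-2}\mathbf D(\mathbf v^m)$, which appears nonlocally inside $\varphi_n$. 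The weak limit $\boldsymbol\chi$ is not enough, because $\varphi_n$ multiplies the test function. We would first try to obtain strong convergence of $\nabla\mathbf v^m$. For fixed $n$ the $\mathbb V$-bound on $\partial_t\mathbf v$ gives equicontinuity in $\mathbb V$, but this alone is not compact. We therefore use the Minty--Browder trick on $\mathbf v^m-\mathbf v$, exploiting the $\varkappa$-regularization. Subtracting the equations and testing with the difference, all terms except the monotone one are controlled by strong $\mathbb H$-convergence and Gronwall's inequality in $\|\nabla(\mathbf v^m-\mathbf v)\|^2$. Together with the strong monotonicity of the $p$-Laplacian for $p\le2$, this shows $\mathbf v^m\to\mathbf v$ in $C([0,T_n];\mathbb V)$. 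Hence $\varphi_n(t,[\mathbf v^m])\to\varphi_n(t,[\mathbf v])$, and the limit is a weak solution. Lemma \ref{le:1} then transfers it to \eqref{eq2.1}, and the estimates persist by lower semicontinuity.
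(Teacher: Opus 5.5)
Your a priori estimates are correct and follow the paper's proof closely. As in the paper, you test with $\mathbf v_n$ and with $\partial_t\mathbf v_n$, and use the over-determination to turn the source terms into $\varphi_n j_n$ and $\varphi_n j_n'$. For \eqref{eq:first-int} you apply Gronwall with only $\int_0^{T_n}|j_n'|\,dt\le J$. For \eqref{eq:third-int} you divide by $\|\partial_t\mathbf v_n\|_{\mathbb V}$ (the paper takes square roots, which amounts to the same) and integrate using $\|\varphi_n\|_{L^1}\le C$. Your pointwise bound $|\varphi_n|\le C(1+|j_n'|)$, obtained from the $L^\infty(0,T_n;\mathbb V)$ bound and $p\le2$, is a slightly more direct route to \eqref{eq:second-int} than the paper's Young-plus-Gronwall argument. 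You are also right that the approximations must satisfy the over-determination exactly; otherwise the source term is cubic and no global bound follows. To arrange this, take $V_m=\mathrm{span}\{\boldsymbol\omega,\mathbf w_1,\dots,\mathbf w_{m-1}\}$, and take as initial datum the projection of $\mathbf v_0$ orthogonal in $(\mathbf u,\mathbf w)_\varkappa:=(\mathbf u,\mathbf w)+\varkappa(\nabla\mathbf u,\nabla\mathbf w)$, with $\|\mathbf u\|_\varkappa^2:=(\mathbf u,\mathbf u)_\varkappa$. The paper does not construct solutions for fixed $n$ at all; it cites \cite{AntKh}.

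The step that fails is the end of your Galerkin passage $m\to\infty$. First, the diagnosis is off. $\varphi_n^m:=\varphi_n(t,[\mathbf v^m])$ depends on $\boldsymbol\chi_m:=|\mathbf D(\mathbf v^m)|^{p-2}\mathbf D(\mathbf v^m)$ only through the linear term $\mu(\boldsymbol\chi_m,\nabla\boldsymbol\omega)$, and its convective part converges strongly. Moreover, $\varphi_n^m$ multiplies the $t$-independent number $(\boldsymbol\omega-\varkappa\Delta\boldsymbol\omega,\boldsymbol\phi)$. So $\boldsymbol\chi_m\rightharpoonup\boldsymbol\chi$ already gives $\varphi_n^m\rightharpoonup\varphi^\ast:=\mu(\boldsymbol\chi,\nabla\boldsymbol\omega)-((\mathbf v\cdot\nabla)\boldsymbol\omega,\mathbf v)+j_n'$ weakly in $L^2(0,T_n)$, and the limit equation holds with $\boldsymbol\chi$ and $\varphi^\ast$.

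Second, the remedy you propose is circular. The only equation available for $\mathbf v$ at this stage contains $\boldsymbol\chi$. Subtracting therefore produces $(\boldsymbol\chi_m-\boldsymbol\chi,\mathbf D(\mathbf v^m-\mathbf v))$, which has no sign. Monotonicity of the $p$-Laplacian can be invoked only once $\boldsymbol\chi=|\mathbf D(\mathbf v)|^{p-2}\mathbf D(\mathbf v)$ is known, and that is exactly what must be shown. In addition, the Galerkin identity cannot be tested with $\mathbf v^m-\mathbf v\notin V_m$.

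The standard Minty argument closes the gap. Start from the Galerkin energy identity $\frac12\|\mathbf v^m(t)\|_\varkappa^2+\mu\int_0^t\|\mathbf D(\mathbf v^m)\|_{\widetilde{\mathbb L}^p}^p\,ds=\frac12\|\mathbf v^m(0)\|_\varkappa^2+\int_0^t\varphi_n^m j_n\,ds$. Its last term converges because $j_n$ is fixed. Two further ingredients are needed:
\begin{itemize}
\item weak lower semicontinuity of $\|\mathbf v^m(t)\|_\varkappa$;
\item the energy equality $\frac12\|\mathbf v(t)\|_\varkappa^2+\mu\int_0^t(\boldsymbol\chi,\mathbf D(\mathbf v))\,ds=\frac12\|\mathbf v_0\|_\varkappa^2+\int_0^t\varphi^\ast j_n\,ds$ for the limit, which is admissible since $\partial_t\mathbf v\in L^2(0,T_n;\mathbb V)$ and the over-determination passes to the limit.
\end{itemize}
Together these give $\limsup_m\int_0^t(\boldsymbol\chi_m,\mathbf D(\mathbf v^m))\,ds\le\int_0^t(\boldsymbol\chi,\mathbf D(\mathbf v))\,ds$, hence $\boldsymbol\chi=|\mathbf D(\mathbf v)|^{p-2}\mathbf D(\mathbf v)$. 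Strong convergence in $C([0,T_n];\mathbb V)$ is neither needed nor established by your argument.
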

\begin{remark}
\label{rem3.1}
In comparison with \eqref{eq:second-int}, the estimate
\begin{align*}
&
\|\partial_t\mathbf{v}_n\|_{L^2(1/n,T_n;\mathbb{H})}^2+ \|\nabla\partial_t\mathbf{v}_n
\|^2_{L^2(1/n,T_n;\mathbb{H})}+\underset{t\in(1/n,T_n)}{\rm esssup}\|\nabla\mathbf{v}_n(\cdot,t)\|_{\widetilde{\mathbb{L}}^p}^p 
+\|\varphi_n\|_{L^2(1/n,T_n)}^2
\\
&\qquad\leq  \frac{\mu}{p}\|\mathbf{v}_0\|_{\mathbb{V}_{p}}^{p}+C\left(1+\int_{1/n}^{T_n} |j_n'(t)|^2\,dt\right)
\end{align*}
is uniform in $n$.
\end{remark}

\noindent\textbf{Proof.} The existence of a global solution to problem \ref{eq2.1} for a fixed parameter $n$
was proven in \cite{AntKh}. The global solvability of a weak solution is also guaranteed by condition \eqref{eq1.2'}. This study focuses on proving global estimates of solutions that are uniform in parameter $n$.
\vspace{2mm}

\noindent \textbf{The first uniform energy estimate.} Taking the inner product with $\mathbf{v}_n(\cdot)$ to the first equation in \eqref{eq2.1'A}, and using integration by parts over $\Omega$, we obtain
\begin{align}\label{E1}
\frac{d}{dt}\int_\Omega\Big(|\mathbf{v}_n|^2
+\varkappa|\nabla\mathbf{v}_n|^2\Big)\,dx +
2\mu\int_\Omega|\mathbf{D}(\mathbf{v}_n)|^p\,dx=2\varphi_n(t,[\mathbf{v}_n])j_n(t).
\end{align}
With reference to \eqref{phi_n}, we obtain the following inequality:
\begin{align}
&2\varphi_n(t,[\mathbf{v}_n])j_n(t)=
2j_n(t)\Bigg(\mu\int_\Omega |\mathbf{D}(\mathbf{v}_n)|^{p-2}\mathbf{D}(\mathbf{v}_n) :\nabla\boldsymbol\omega\,d{x}
-\int_\Omega
(\mathbf{v}_n \cdot\nabla)\boldsymbol\omega\cdot\mathbf{v}_n\,d{x} +j_n'(t)\Bigg) \nonumber
\\
& \leq 
2|j_n(t)| \Big( \mu \| \mathbf{D}(\mathbf{v}_n) \|_{\widetilde{\mathbb{L}}^p}^{p-1} \| \nabla \boldsymbol{\omega}\|_{\widetilde{\mathbb{L}}^p} +\| \mathbf{v}_n \|_{\mathbb{H}}^{2} \| \nabla\boldsymbol{\omega} \|_{\widetilde{\mathbb{L}}^\infty}+|j_n'(t)|\Big) \nonumber
\\
&\leq2 \, J_0\Bigg(\frac{\mu\delta}{p^\prime}\|\mathbf{D}(\mathbf{v}_n) \|_{\widetilde{\mathbb{L}}^p}^{p}
+\frac{\mu}{p\delta^{p-1}}\|\nabla\boldsymbol\omega\|_{\widetilde{\mathbb{L}}^p}^p 
+\|\nabla\boldsymbol{\omega}\|_{\widetilde{\mathbb{L}}^\infty}\Big(\|\mathbf{v}_n\|_{\mathbb{H}}^{2}
+\varkappa\|\nabla\mathbf{v}_n\|_{\mathbb{H}}^2 \Big) 
+|j_n'(t)|\Bigg),
\label{E2}
\end{align}
where $\max\limits_{t\in[0,T_n]}|j_n(t)|=J_0$ and $\frac1p+\frac1{p'}=1$. Substituting the estimate \eqref{E2} into equation \eqref{E1} yields the following estimate:
\begin{align}
&\frac{d}{dt} \Big(\| \mathbf{v}_n \|_{\mathbb{H}}^2+\varkappa\| \nabla\mathbf{v}_n \|_{\mathbb{H}}^2 \Big)+ 2\mu\left(1-\frac{J_0\delta}{p^\prime}\right)
\|\mathbf{D}(\mathbf{v}_n)\|_{\widetilde{\mathbb{L}}^p}^p
\nonumber
\\&
\leq 
\frac{2\mu J_0}{p\delta^{p-1}}\|\nabla\boldsymbol\omega\|_{\widetilde{\mathbb{L}}^p}^p
+ 2J_0 \| \nabla\boldsymbol{\omega} \|_{\widetilde{\mathbb{L}}^\infty}  \Big(\| \mathbf{v}_n \|_{\mathbb{H}}^{2}   + \varkappa\|\nabla \mathbf{v}_n\|_{\mathbb{H}}^2 \Big) 
+2J_0|j_n'(t)|.\label{E3}
\end{align}
By assumption $\int_{0}^{T_n} | j_n'(s)|\,ds \le J $, and $\delta_0<1-\frac{J_0\delta}{p^\prime} $, and applying Gronwall's inequality to inequality \eqref{E3}, we get the uniform estimates in 
$$
L^\infty(0,T_n;\mathbb{H})\cap L^\infty(0,T_n;\mathbb{V}) \cap L^{p}(0,T_n;\mathbb{V}_p).
$$ 
It follows from \eqref{phi_n} that  
\begin{align*}
\int_0^{T_n}|\varphi_n(t)|\,dt 
& \leq \int_0^{T_n}\Big( \mu \| \mathbf{D}(\mathbf{v}_n) \|_{\widetilde{\mathbb{L}}^p}^{p-1} \| \nabla \boldsymbol{\omega}\|_{\widetilde{\mathbb{L}}^p} +\| \mathbf{v}_n \|_{\widetilde{\mathbb{L}}^{4}}^{2} \| \nabla\boldsymbol{\omega} \|_{\mathbb{H}}+|j_n'(t)|\Big) \,dt 
\\
&\leq \mu T_n^{\frac{1}{p}}\|\nabla \boldsymbol{\omega}\|_{\widetilde{\mathbb{L}}^{p}} \|\mathbf{v}_n\|_{L^{p}(0,T_n;\mathbb{V}_p)}^{p-1}{ +C T_n
\|\boldsymbol{\omega}\|_{\mathbb{V}}
\|\mathbf{v}_n\|_{L^\infty(0,T_n;\mathbb{H})}^{2-\frac{d}{2}}
\|\mathbf{v}_n\|_{L^\infty(0,T_n;\mathbb{V})}^\frac{d}{2} }
\nonumber 
\\
&
\quad
+\int_0^{T_n}|j'_n(t)|\,dt\leq K_1,
\end{align*}
where we have used H\"older's and  Ladyzhenskaya's  inequalities. This estimate is uniform in $n$, but the estimate 
$\int_0^{T_n}|\varphi_n(t)|^{2}\,dt$ is non-uniform in $n$.
Although, the estimate 
$\int_{1/n}^{T_n}|\varphi_n(t)|^{2}\,dt$ is uniform in $n$.
\vspace{2mm}

\noindent\textbf{The second uniform energy estimate.}
Taking the inner product with $\partial_{t}\mathbf{v}_n(\cdot)$ to the first equation in \eqref{eq2.1'A}, we deduce
\begin{align}\label{E4}
&
\int_\Omega \Big(|\partial_t\mathbf{v}_n|^2
+\varkappa|\nabla\partial_t\mathbf{v}_n|^2 \Big)\,dx+\frac{\mu}{p}\frac{d}{dt}\int_\Omega|\mathbf{D}(\mathbf{v}_n) |^{p} \,dx
\nonumber\\&
=-\int_\Omega(\mathbf{v}_n\cdot
\nabla)\mathbf{v}_n \cdot\partial_t\mathbf{v}_n\,dx +\varphi_n(t,[\mathbf{v}_n]) j_n^{\prime}(t).
\end{align}
Using H\"older's, Ladyzhenskaya's, and Young's inequalities, we obtain the following estimate:
\begin{align}\label{E5}
& \Big| \int_\Omega(\mathbf{v}_n\cdot
\nabla)\mathbf{v}_n \cdot\partial_t\mathbf{v}_n\,dx \Big |= \Big| \int_\Omega(\mathbf{v}_n\cdot
\nabla)\partial_t\mathbf{v}_n \cdot\mathbf{v}_n\,dx \Big | \leq \|\nabla\partial_t\mathbf{v}_n \|_{\mathbb{H}} \|\mathbf{v}_n \|_{\widetilde{\mathbb{L}}^4}^2
\nonumber
\\
& \quad \leq C\|\nabla\partial_t\mathbf{v}_n \|_{\mathbb{H}} \|\mathbf{v}_n \|_{\mathbb{H}}^{2(1-\frac{d}4)} \|\nabla\mathbf{v}_n \|_{\mathbb{H}}^{\frac{d}2} \leq \frac{\varkappa}{2}\|\nabla\partial_t\mathbf{v}_n \|_{\mathbb{H}}^2+ C\|\mathbf{v}_n \|_{\mathbb{H}}^{4-d} \|\nabla\mathbf{v}_n \|_{\mathbb{H}}^{d}.
\end{align}
Applying H\"older's inequality 
\[
\int_\Omega|fg|\,dx\leq\left(\int_\Omega|f|^2\,dx\right)^{\frac12}\left(\int_\Omega|g|^2\,dx\right)^{\frac12},\quad \int_\Omega|fg|\,dx\le \left(\int_\Omega|f|^{\frac{p}{p-1}}\,dx\right)^{\frac{p-1}p}\left(\int_\Omega |g|^p\,dx\right)^{\frac1p},
\]
and Young's inequality 
\[
ab\leq\frac{2(p-1)}{p}a^{\frac{p}{2(p-1)}}+\frac{2-p}{p}b^{\frac{p}{2-p}},
\]
and the trivial inequality
\[
(a+b+c)^2 \leq 3(a^2+b^2+c^2),
\] 
which is a specific application of the Cauchy--Schwarz inequality, and then using  \eqref{phi_n} in the resulting estimate, and Ladyzhenskaya's inequality \eqref{Leq}, we obtain
\begin{align}\label{E6}
&|\varphi_n(t,[\mathbf{v}_n])j_n'(t)|=|\varphi_n(t,[\mathbf{v}_n])||j_n'(t)|=\Big(\sqrt{|\varphi_n(t,[\mathbf{v}_n])|}\sqrt{|j_n'(t)|}\Big)^2\leq \Big(\frac{1}{2}|\varphi_n(t,[\mathbf{v}_n])| + \frac{1}{2}|j_n'(t)| \Big)^2
\nonumber 
\\& 
\leq \left(\frac{\mu}{2}\int_\Omega |\mathbf{D}(\mathbf{v}_n)|^{p-1}|\nabla\boldsymbol\omega|\,d{x}
+\frac{1}{2}\int_\Omega
|\mathbf{v}_n|^2|\nabla\boldsymbol\omega|\,d{x}+|j_n'(t)|\right)^2 \nonumber
\\
&\leq\frac{3\mu^2}{4}\|\mathbf{D}(\mathbf{v}_n)\|_{\widetilde{\mathbb{L}}^p}^{2(p-1)}
\|\nabla\boldsymbol{\omega}\|_{\widetilde{\mathbb{L}}^p}^2+
\frac34\|\mathbf{v}_n\|_{\widetilde{\mathbb{L}}^4}^{4}
\|\nabla\boldsymbol{\omega}\|_{\mathbb{H}}^2+3|j_n'(t)|^2\nonumber
\\
&\leq
\frac{3\mu^2}{4}\frac{2(p-1)}{p}\|\mathbf{D}(\mathbf{v}_n) \|_{\widetilde{\mathbb{L}}^p}^{p}
+\frac{3\mu^2}{4}\frac{(2-p)}{p}\|\nabla\boldsymbol\omega\|_{\widetilde{\mathbb{L}}^{p}}^{\frac{2p}{2-p}}
+ C\|\mathbf{v}_n \|_{\mathbb{H}}^{4-d} \|\nabla\mathbf{v}_n \|_{\mathbb{H}}^{d}\| \nabla\boldsymbol{\omega} \|_{\mathbb{H}}^2
+3|j_n'(t)|^2.
\end{align}
Substituting the estimates \eqref{E5} and \eqref{E6} in \eqref{E4} and then integrating it with respect to time from $0$ to $t$, we find
\begin{align}
&\int_0^{t} \|\partial_t\mathbf{v}_n(s) \|_{\mathbb{H}}^2 \,ds+\frac{\varkappa}{2} \int_0^{t}\|\nabla\partial_t\mathbf{v}_n(s) \|_{\mathbb{H}}^2 \,ds+\frac{\mu}{p}\|\mathbf{D}(\mathbf{v}_n)(\cdot,t)\|_{\widetilde{\mathbb{L}}^p}^p \nonumber\\& 
 \leq \frac{\mu}{p}\|\mathbf{v}_0\|_{\mathbb{V}_{p}}^{p} 
+\frac{3\mu^2}{4}\frac{2(p-1)}{p} \int_{0}^{t}
\|\mathbf{D}(\mathbf{v}_n)(\cdot,s)\|_{\widetilde{\mathbb{L}}^p}^p \,ds 
+\frac{3\mu^2}{4}\frac{(2-p)}{p}T_n
\|\nabla\boldsymbol\omega\|_{\widetilde{\mathbb{L}}^{p}}^{\frac{2p}{2-p}}
 \nonumber
\\& 
\quad+ C(1+\|\nabla \boldsymbol{\omega}\|_{\mathbb{H}}^2 )\int_{0}^{t}  \|\mathbf{v}_n (\cdot,s)\|_{\mathbb{H}}^{4-d} \|\nabla\mathbf{v}_n (\cdot,s)\|_{\mathbb{H}}^{d} \,ds +3\int_{0}^{t}|j_n'(s)|^2\,ds, \label{E7}
\end{align}
for all $t \in [0,T_n]$ and \eqref{eq:second-int} follows by virtue of Gronwall's inequality and the energy estimate \eqref{eq:first-int}. The proof of \eqref{eq:second-int} is completed.

To obtain \eqref{eq:third-int}, we use an alternative method. Taking the inner product with $\partial_{t}\mathbf{v}_n(\cdot)$ to the first equation in \eqref{eq2.1'A} and then using H\"older's, Ladyzhenskaya's, and Young's inequalities, we obtain the following estimate:
\begin{align}
&\int_\Omega \Big(|\partial_t\mathbf{v}_n|^2
+\varkappa|\nabla\partial_t\mathbf{v}_n|^2 \Big)\,dx=
\int_\Omega(\mathbf{v}_n\otimes\mathbf{v}_n):\nabla\partial_t\mathbf{v}_n\,dx
\nonumber
\\& 
\quad
- \mu \int_\Omega |\mathbf{D}(\mathbf{v}_n) |^{p-2} \mathbf{D}(\mathbf{v}_n):\nabla \partial_t \mathbf{v}_n \,dx  
+\varphi_n(t,[\mathbf{v}_n]) j_n^{\prime}(t)
\nonumber
\\& 
\leq \|\nabla\partial_t\mathbf{v}_n \|_{\mathbb{H}} \|\mathbf{v}_n \|_{\widetilde{\mathbb{L}}^4}^2 + \mu \| \mathbf{D}(\mathbf{v}_n) \|_{\widetilde{\mathbb{L}}^{2(p-1)}}^{p-1}\|\nabla\partial_t\mathbf{v}_n \|_{\mathbb{H}} 
+|\varphi_n(t)||j_n'(t)|
\nonumber
\\& 
\leq \frac{\varkappa}{4} \|\nabla\partial_t\mathbf{v}_n \|_{\mathbb{H}}^2 + C\|\mathbf{v}_n \|_{\mathbb{H}}^{4-d} \|\nabla\mathbf{v}_n \|_{\mathbb{H}}^{d} 
+ \frac{\mu^2}{\varkappa}\| \mathbf{D}(\mathbf{v}_n) \|_{\widetilde{\mathbb{L}}^{2(p-1)}}^{2(p-1)}
\nonumber
\\& 
\quad
+\frac{\varkappa}{4}\|\nabla\partial_t\mathbf{v}_n \|_{\mathbb{H}}^2 
+\frac{1}{2}|\varphi_n(t)|^2 + \frac{1}{2}|j_n'(t)|^2,
\label{eq3.11}
\end{align}
and this gives
\begin{align*}
\|\partial_t\mathbf{v}_n \|_{\mathbb{H}}^2 +\frac{\varkappa}{2} \, \|\nabla\partial_t\mathbf{v}_n \|_{\mathbb{H}}^2 \leq C\|\mathbf{v}_n \|_{\mathbb{H}}^{4-d} \|\nabla\mathbf{v}_n \|_{\mathbb{H}}^{d}
+ \frac{\mu^2}{\varkappa}\| \mathbf{D}(\mathbf{v}_n) \|_{\widetilde{\mathbb{L}}^{2(p-1)}}^{2(p-1)} +\frac{1}{2}|\varphi_n(t)|^2 + \frac{1}{2}|j_n'(t)|^2.
\end{align*}
Taking the square root of the above inequality by the rule 
\[
\frac1{\sqrt{2}}(A+B)\leq\sqrt{A^2+B^2}\leq A+B, 
\]
we arrive at the following inequality:
\begin{align}\label{eq3.12}
\frac{1}{\sqrt{2}}\Big(\|\partial_t\mathbf{v}_n \|_{\mathbb{H}} +\sqrt{\frac{\varkappa}{2}} \, \|\nabla\partial_t\mathbf{v}_n \|_{\mathbb{H}} \Big) & \leq C\|\mathbf{v}_n \|_{\mathbb{H}}^{2-\frac{d}2} \|\nabla\mathbf{v}_n \|_{\mathbb{H}}^{\frac{d}2} + \frac{\mu}{\sqrt{\varkappa}}\| \mathbf{D}(\mathbf{v}_n) \|_{\widetilde{\mathbb{L}}^{2(p-1)}}^{(p-1)}
\nonumber
\\& 
\quad+\sqrt{\frac{1}{2}}|\varphi_n(t)|+ \sqrt{\frac{1}{2}}|j_n'(t)|.
\end{align}
Integrating the above estimate \eqref{eq3.12} with respect to time from $0$ to $T_n$ and then using the estimate \eqref{eq:first-int} and condition $2(p-1)\leq p$ in it, one can immediately obtain the energy estimate \eqref{eq:third-int}.
This completes the proof of Theorem \ref{Theorem.2.1}. 
\begin{corollary}
\label{cor3.11}
From uniform boundedness of $\partial_t\mathbf{v}_n$ in $L^1(0,T_n;\mathbb{V})$ follows the integral equi-continuity
\[
\int_0^{T_n-h}\|\mathbf{v}_n(\cdot,t+h)-\mathbf{v}_n(\cdot,t)\|_{\mathbb{V}}\,dt\leq \omega(h),
\]
where $\omega(h)\to0$ uniformly in $n\in\mathbb{N}$ as $h\to0^+$. 
\end{corollary}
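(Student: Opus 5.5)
The plan is to write the increment as a time integral of the derivative, exchange the order of integration, and then control the derivative uniformly in $n$. The uniform $L^1$ bound alone would only give $\omega(h)\le Ch$ in a weak sense; to make the modulus explicit we also use the $L^2$ bound away from the initial layer.

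\emph{Step 1: the increment.} Since $\partial_t\mathbf{v}_n\in L^2(0,T_n;\mathbb{V})$ for each fixed $n$ by Definition \ref{def1.1}, the map $t\mapsto\mathbf{v}_n(\cdot,t)$ is absolutely continuous with values in $\mathbb{V}$. Hence
\[
\mathbf{v}_n(\cdot,t+h)-\mathbf{v}_n(\cdot,t)=\int_t^{t+h}\partial_t\mathbf{v}_n(\cdot,s)\,ds ,
\]
and therefore
\[
\|\mathbf{v}_n(\cdot,t+h)-\mathbf{v}_n(\cdot,t)\|_{\mathbb{V}}\le\int_t^{t+h}\|\nabla\partial_t\mathbf{v}_n(\cdot,s)\|_{\mathbb{H}}\,ds .
\]

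\emph{Step 2: Fubini.} Integrating in $t\in(0,T_n-h)$ and exchanging the order of integration, each $s$ is counted for $t\in(s-h,s)$, a set of length at most $h$. This gives
\[
\int_0^{T_n-h}\|\mathbf{v}_n(\cdot,t+h)-\mathbf{v}_n(\cdot,t)\|_{\mathbb{V}}\,dt\le h\int_0^{T_n}\|\nabla\partial_t\mathbf{v}_n(\cdot,s)\|_{\mathbb{H}}\,ds\le Ch .
\]
The last inequality is \eqref{eq:third-int}, whose constant $C$ is independent of $n$. Thus $\omega(h)=Ch$ is admissible and tends to $0$ uniformly in $n$.

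\emph{Step 3: sharper modulus away from $t=0$.} On $(1/n,T_n)$, Remark \ref{rem3.1} together with ${\bf C}_8$ bounds $\|\nabla\partial_t\mathbf{v}_n\|_{L^2(1/n,T_n;\mathbb{H})}$ uniformly in $n$. For $t\ge1/n$, Cauchy--Schwarz then gives
\[
\|\mathbf{v}_n(\cdot,t+h)-\mathbf{v}_n(\cdot,t)\|_{\mathbb{V}}\le Ch^{1/2}
\]
pointwise in $t$. This stronger estimate is not needed for the corollary, but it will be useful later for applying Lemma \ref{ALSlemma-1} to the shifted solutions.

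\emph{Main obstacle.} The only delicate point is the initial layer $[0,1/n]$. There $\partial_t\mathbf{v}_n$ is not uniformly bounded in $L^2$, because $\int|j_n'|^2$ blows up as $n\to\infty$. This is why the argument relies on the $L^1$-in-time bound \eqref{eq:third-int} rather than on \eqref{eq:second-int}. With \eqref{eq:third-int} available, the integral equicontinuity follows directly from Fubini and gives a linear modulus $\omega(h)=Ch$. One should not expect pointwise-in-$t$ equicontinuity near $t=0$, since the limit function jumps there.
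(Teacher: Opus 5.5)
Your Steps 1--2 are correct and are exactly the argument the paper leaves implicit: write the increment as $\int_t^{t+h}\partial_t\mathbf{v}_n\,ds$, apply Fubini, and invoke the $n$-uniform $L^1(0,T_n;\mathbb{V})$ bound \eqref{eq:third-int} to get $\omega(h)=Ch$. Step 3 is also correct but not needed here, and your opening claim that the $L^1$ bound gives $Ch$ only ``in a weak sense'' is inaccurate, since Step 2 already gives a fully explicit modulus.
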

\section*{Uniqueness}
\vspace{2mm}
Let us prove the uniqueness of the weak solutions.
Let $\mathbf{v}_n^1(x,t)$ and $\mathbf{v}_n^2(x,t)$ be two weak solutions to problem \eqref{eq2.1'A}-\eqref{phi_n}. Subtracting the equation for $\mathbf{v}_n^2(x,t)$ from the equation for $\mathbf{v}_n^1(x,t)$, and then multiplying the resulting equation by $\mathbf{v}_n (x,t):= \mathbf{v}_n^1 (x,t)-\mathbf{v}_n^2(x,t)$,  we obtain 

\begin{align}\label{Uniqueness}
&\frac{1}{2}\frac{d}{dt} \Big(\| \mathbf{v}_n \|_{\mathbb{H}}^2+\varkappa\| \nabla\mathbf{v}_n \|_{\mathbb{H}}^2 \Big) + \mu\int_\Omega \Big(
|\mathbf{D}(\mathbf{v}_n^1)|^{p-2}\mathbf{D}(\mathbf{v}_n^1) -|\mathbf{D}(\mathbf{v}_n^2)|^{p-2}\mathbf{D}(\mathbf{v}_n^2) \Big)
:\nabla \mathbf{v}_n\,dx
\nonumber\\&\quad
=-\int_\Omega
(\mathbf{v}_n\cdot\nabla)\mathbf{v}_n^{1}
\cdot\mathbf{v}_n\,dx+(\varphi_n^1(t,[\mathbf{v}_n^{1}])-\varphi_n^2(t,[\mathbf{v}_n^{2}]))
\int_\Omega(\boldsymbol{\omega}({x})-\varkappa\Delta \boldsymbol{\omega}({x})) \cdot\mathbf{v}_n\,dx.
\end{align}
Recalling the choice \eqref{phi_n} and the integral form of over-determination condition in
\eqref{eq2.1}, the term, which includes $\varphi_n^1-\varphi_n^2$, disappears in identity \eqref{Uniqueness}, since 
\[
(\varphi_n^1-\varphi_n^2)\int_\Omega(\boldsymbol{\omega}({x})-\varkappa\Delta \boldsymbol{\omega}({x})) \cdot\mathbf{v}_n\,dx=
(\varphi_n^1-\varphi_n^2)(j_n(t)-j_n(t))=0.
\] 
By the monotonicity property, we have the following inequality
\begin{align*}
\int_\Omega \big(
|\mathbf{D}(\mathbf{v}_n^1)|^{p-2}\mathbf{D}(\mathbf{v}_n^1) -|\mathbf{D}(\mathbf{v}_n^2)|^{p-2}\mathbf{D}(\mathbf{v}_n^2) \big):\nabla \mathbf{v}_n\,dx \geq 0,
\end{align*}
see \cite[Lemma 2.3]{AntKh}, and references therein. Using H\"older's, Ladyzhenskaya's, and Poincar\'e's inequalities, we get 
\begin{align*}
&
\Big|\int_\Omega
(\mathbf{v}_n\cdot\nabla)\mathbf{v}_n^{1}
\cdot\mathbf{v}_n\,dx \Big| \leq \|\nabla \mathbf{v}_n^1 \|_{\mathbb{H}} \|\mathbf{v}_n \|_{\widetilde{\mathbb{L}}^4}^2 \leq C \|\nabla \mathbf{v}_n^1 \|_{\mathbb{H}} \| \mathbf{v}_n \|_{\mathbb{H}}^{2-\frac{d}{2}} \|\nabla \mathbf{v}_n \|_{\mathbb{H}}^\frac{d}{2}
\\& \quad \leq C \|\nabla \mathbf{v}_n^1 \|_{\mathbb{H}}  \| \nabla \mathbf{v}_n \|_{\mathbb{H}}^2 \leq C \|\nabla \mathbf{v}_n^1 \|_{\mathbb{H}}  \Big(\| \mathbf{v}_n \|_{\mathbb{H}}^2+\varkappa\| \nabla\mathbf{v}_n \|_{\mathbb{H}}^2 \Big).
\end{align*}
Substituting the above estimates into \eqref{Uniqueness}, we obtain the following estimate:
\begin{align}\label{Uniqueness1}
\frac{d}{dt} \Big(\| \mathbf{v}_n (\cdot,t)\|_{\mathbb{H}}^2+\varkappa\| \nabla\mathbf{v}_n (\cdot,t)\|_{\mathbb{H}}^2 \Big)
\leq C \|\nabla \mathbf{v}_n^1(\cdot,t) \|_{\mathbb{H}}  \Big(\| \mathbf{v}_n (\cdot,t)\|_{\mathbb{H}}^2+\varkappa\| \nabla\mathbf{v}_n (\cdot,t)\|_{\mathbb{H}}^2 \Big),
\end{align}
for all $t \in [0,T_n]$. Applying Gronwall's inequality in \eqref{Uniqueness1}, we obtain
\begin{align}\label{Uniqueness2}
\|\mathbf{v}_n(\cdot,t)\|_{\mathbb{H}}^2 +\varkappa \|\nabla\mathbf{v}_n(\cdot,t)\|_{\mathbb{H}}^2\leq \Big(\|\mathbf{v}_n(\cdot,0)\|_{\mathbb{H}}^2 + \varkappa \|\nabla \mathbf{v}_n(\cdot,0)\|_{\mathbb{H}}^2 \Big) \exp\bigg( C\int_0^{T_n}  \|\nabla \mathbf{v}_n^1(\cdot,t)\|_{\mathbb{H}} \,d t \bigg).
\end{align}
Taking $\mathbf{v}_n^1({x},0)=\mathbf{v}_n^2({x},0)$ in \eqref{Uniqueness2}, we obtain $\mathbf{v}_n^1({x},t)=\mathbf{v}_n^2({x},t)$ and hence, the uniqueness of the weak solutions follows.

\section{The main result}\label{sec4}
\vspace{2mm}
In the present paper, the sequence $\{j_n\}_{n\geq n_0}$ approximates a discontinuous function $j(t)$ at $t=0$ on the interval $(0,\frac1n)$: 
\[
\mathrm{supp}(j_n-j)=\Big[0,\dfrac1n \Big].
\]
Here, we represent the domain $Q_{T_n}$ as the union of two subdomains $\Omega\times[0,\frac1n]$ and $\Omega\times[\frac1n,T_n]$ with intersection at $t=\frac1n$, and formulate the scaled and shifted problems, respectively. Such an approach was elaborated in \cite{AKS} for the impulsive Navier--Stokes system {and in \cite{AKS-2024} for the impulsive Kelvin--Voigt equations.} In the domain $\Omega\times[0,\frac1n]$, for $\overline{t}=tn\in[0,1]$ we introduce the scaled functions
\[
\overline{\mathbf{v}}_n({x},{\overline{t}})=\mathbf{v}_n\Big({x},\frac{{\overline{t}}}n\Big), \ \ 
\overline{\varphi}_n({\overline{t}})=\frac1n\varphi_n\Big(\frac{{\overline{t}}}n\Big), \ \
\overline{j}_n({\overline{t}})=j_n\Big(\frac{{\overline{t}}}n\Big), \ \
\overline{\pi}_n({x},{\overline{t}})=\frac1n\pi_n\Big({x},\frac{{\overline{t}}}n\Big),
\]
satisfying the system
\begin{equation}
\label{eq3.1}
\begin{cases}
&\begin{split} &\partial_{\overline{t}}\overline{\mathbf{v}}_{n}-\frac{\mu}{n}\mathrm{div} \big(|\mathbf{D}(\overline{\mathbf{v}}_n)|^{p-2}\mathbf{D}(\overline{\mathbf{v}}_n)\big) +\frac1n
\mathrm{div}(\overline{\mathbf{v}}_{n}\otimes\overline{\mathbf{v}}_n) -\varkappa\Delta \partial_{\overline{t}}\overline{\mathbf{v}}_{n}
+\nabla\overline{\pi}_n\\&\qquad=\overline{\varphi}_n({\overline{t}})\,
(\boldsymbol{\omega}({x}) -\varkappa\Delta \boldsymbol{\omega}({x}))\ \text{ in }Q_1,
\end{split}
\\
&\operatorname{div}\overline{\mathbf{v}}_n=0 \  \ \text{ in }Q_1,
\\
&\int_\Omega
\overline{\mathbf{v}}_n({x},{\overline{t}})\cdot(\boldsymbol{\omega}({x})-\varkappa\Delta \boldsymbol{\omega}({x}))\,d{x}=\overline{j}_n({\overline{t}}) \ \ \text{ in }(0,1),
\\
&\overline{\mathbf{v}}_n({x},0)=\mathbf{v}_0({x}) \ \ \text{ in }\Omega,
\\
&\overline{\mathbf{v}}_n({x},{\overline{t}})=\boldsymbol{0} \ \ \text{ on }\Gamma_1,
\end{cases}
\end{equation}
where $Q_1:=\Omega\times(0,1)$ and $\Gamma_1:=\partial\Omega\times(0,1)$. {This inverse problem is equivalent to the functional direct problem 
\begin{equation}
\label{eq3.1d}
\begin{cases}
&\begin{split} &\partial_{\overline{t}}\overline{\mathbf{v}}_{n}-\frac{\mu}{n}\mathrm{div} \big(|\mathbf{D}(\overline{\mathbf{v}}_n)|^{p-2}\mathbf{D}(\overline{\mathbf{v}}_n)\big) +\frac1n
\mathrm{div}(\overline{\mathbf{v}}_{n}\otimes\overline{\mathbf{v}}_n) -\varkappa\Delta \partial_{\overline{t}}\overline{\mathbf{v}}_{n}
+\nabla\overline{\pi}_n
\\
&\qquad=\overline{\varphi}_n({\overline{t}},[\overline{\mathbf{v}}_{n}])\,
(\boldsymbol{\omega}({x})-\varkappa\Delta \boldsymbol{\omega}({x})) \ \ \text{ in }Q_1,
\end{split}
\\
&\operatorname{div}\overline{\mathbf{v}}_n=0 \ \ \text{ in }Q_1,
\\
&\overline{\mathbf{v}}_n({x},0)=\mathbf{v}_0({x}) \ \ \text{ in }\Omega,
\\
&\overline{\mathbf{v}}_n({x},{\overline{t}})=\boldsymbol{0} \ \ \text{ on }\Gamma_1,
\end{cases}
\end{equation}
where
\begin{align*}
\overline{\varphi}_n({\overline{t}},[\overline{\mathbf{v}}_n])&=\overline{j}_n'({\overline{t}})
+\frac{\mu}{n}\int_\Omega\big(|\mathbf{D}(\overline{\mathbf{v}}_n)|^{p-2}\mathbf{D}(\overline{\mathbf{v}}_n)\big)({x},{\overline{t}}):\nabla\boldsymbol{\omega}({x})\,d{x} 
\\ &\quad
-\frac1n\int_\Omega
(\overline{\mathbf{v}}_n({x},{\overline{t}})\cdot\nabla)\boldsymbol{\omega}({x})\cdot\overline{\mathbf{v}}_n({x},{\overline{t}})\,d{x}.
\end{align*}}
In the rest of the domain {$\Omega\times[\frac1n,T_n]$}, for $\widetilde{t}=t-\frac1n\in[0,T]$ we apply the function
\begin{align*}
&\widetilde{\mathbf{v}}_n({x},\widetilde{t})=\mathbf{v}_n\Big({x}, \frac{1}n+\widetilde{t}\Big), \ 
\widetilde{\varphi}_n(\widetilde{t})=\varphi_n\Big(\frac{1}n+\widetilde{t}\Big), \ 
\widetilde{j}_n(\widetilde{t})=j_n\Big(\frac{1}n+\widetilde{t}\Big), \
\widetilde{\pi}_n({x},\widetilde{t})=\pi_n\Big({x},\frac{1}n+\widetilde{t}\Big),
\end{align*}
satisfying the system
\begin{equation}
\label{eq3.2}
\begin{cases}
&\begin{split} &\partial_{\widetilde{t}}\widetilde{\mathbf{v}}_{n}-\mu \ \mathrm{div}\big(|\mathbf{D}(\widetilde{\mathbf{v}}_n)|^{p-2}\mathbf{D}(\widetilde{\mathbf{v}}_n)\big)+
\mathrm{div}(\widetilde{\mathbf{v}}_{n}\otimes\widetilde{\mathbf{v}}_n) -\varkappa \Delta \partial_{\widetilde{t}} \widetilde{\mathbf{v}}_{n}+\nabla\widetilde{\pi}_n\\&\qquad=\widetilde{\varphi}_n(\widetilde{t})\,
(\boldsymbol{\omega}({x})-\varkappa\Delta \boldsymbol{\omega}({x})) \ \ \text{ in }Q_{T},
\end{split}
\\
&\operatorname{div}\widetilde{\mathbf{v}}_n=0 \ \ \text{ in }Q_{T},
\\
&\int_\Omega\widetilde{\mathbf{v}}_n({x},\widetilde{t})
\cdot(\boldsymbol{\omega}({x})-\varkappa\Delta \boldsymbol{\omega}({x}))\,d{x}
=\widetilde{j}_n(\widetilde{t}) \ \ \text{ in }(0,T),
\\
&\widetilde{\mathbf{v}}_n({x},0)=\overline{\mathbf{v}}_n({x},1) \ \ \text{ in }\Omega,
\\
&\widetilde{\mathbf{v}}_n({x},\widetilde{t})=\boldsymbol{0} \ \
\text{ on }\Gamma_{T},
\end{cases}
\end{equation}
{which is equivalent to the functional direct problem 
\begin{equation}
\label{eq3.2d}
\begin{cases}
&\begin{split} &\partial_{\widetilde{t}}\widetilde{\mathbf{v}}_{n}-\mu \ \mathrm{div}\big(|\mathbf{D}(\widetilde{\mathbf{v}}_n)|^{p-2}\mathbf{D}(\widetilde{\mathbf{v}}_n)\big)+
\mathrm{div}(\widetilde{\mathbf{v}}_{n}\otimes\widetilde{\mathbf{v}}_n) -\varkappa \Delta \partial_{\widetilde{t}} \widetilde{\mathbf{v}}_{n}+\nabla\widetilde{\pi}_n\\&\qquad=\widetilde{\varphi}_n(\widetilde{t},[\widetilde{\mathbf{v}}_n])\,
(\boldsymbol{\omega}({x})-\varkappa\Delta \boldsymbol{\omega}({x})) \ \ \text{ in }Q_{T},
\end{split}
\\
&\operatorname{div}\widetilde{\mathbf{v}}_n=0 \ \ \text{ in }Q_{T},
\\
&\widetilde{\mathbf{v}}_n({x},0)=
\overline{\mathbf{v}}_n({x},1) \ \ \text{ in }\Omega,
\\
&\widetilde{\mathbf{v}}_n({x},\widetilde{t})=\boldsymbol{0} \ \
\text{ on }\Gamma_{T},
\end{cases}
\end{equation}
where
\begin{align}
\widetilde{\varphi}_n(\widetilde{t},[\widetilde{\mathbf{v}}_n])&=\widetilde{j}_n'(\widetilde{t})
+\mu \int_\Omega|\mathbf{D}(\widetilde{\mathbf{v}}_n)|^{p-2}\mathbf{D}(\widetilde{\mathbf{v}}_n)
: \nabla\boldsymbol{\omega}({x})\,d{x} 
-\int_\Omega(\widetilde{\mathbf{v}}_n({x},\widetilde{t})\cdot\nabla)\boldsymbol{\omega}({x})\cdot\widetilde{\mathbf{v}}_n({x},\widetilde{t})\,d{x}.
\label{sphi_n}
\end{align}}
We now present the main results of this study.
\begin{theorem}\label{theorem3.1} 
Let Conditions \ref{C} be satisfied. Then the following assertions hold true.
\begin{itemize}
\item[{\bf 1.}] The family of scaled solutions {$\displaystyle\{(\overline{\mathbf{v}}_n,\overline{\varphi}_n)\}_{n \geq n_0}$} of problem \eqref{eq3.1} is
strongly-strongly relatively compact in the Cartesian product of the spaces 
$C([0,1];\mathbb{H})\times{L^{2}(0,1)}$. Namely, there exist
subsequences of $\displaystyle\{\overline{\mathbf{v}}_n\}_{n\geq n_0}$ and $\displaystyle\{\overline{\varphi}_n\}_{n\geq n_0}$, still labeled by $n$, and a limit vector-function
$\displaystyle\overline{\mathbf{v}}\in C([0,1];\mathbb{H})$ such that
\begin{subequations}
\begin{align}
\label{eq3.3a}
\overline{\varphi}_n \xrightarrow[n\to\infty]{}  \overline{j}^\prime
& \ \mbox{ strongly in }L^{2}(0,1),
\\
\label{eq3.3b} 
\overline{\mathbf{v}}_n
\xrightarrow[n\to\infty]{}  \overline{\mathbf{v}}
& \ \mbox{ strongly in }C([0,1];\mathbb{H}),
\end{align}
\end{subequations}
where the function $\overline{\mathbf{v}}$ resolves the direct problem
\begin{equation}
\label{eq3.4}
\begin{cases}
&\begin{split} & \partial_{\overline{t}} \overline{\mathbf{v}}-
\varkappa\Delta \partial_{\overline{t}} \overline{\mathbf{v}}
+\nabla\overline{\pi}=\overline{j}^\prime({\overline{t}})\,(\boldsymbol{\omega}({x})-\varkappa\Delta\boldsymbol{\omega}({x})) \ \ \text{ in }Q_1,
\end{split}
\\
&\operatorname{div} \overline{\mathbf{v}}=0 \ \ \text{ in }Q_1,
\\
&\overline{\mathbf{v}}({x},0)=\mathbf{v}_0({x}) \ \ \text{ in }\Omega,
\\
&\overline{\mathbf{v}}({x},{\overline{t}})=\boldsymbol{0} \ \ \text{ on }\Gamma_1,
\end{cases}
\end{equation}
where $\overline{j}({\overline{t}})=\lim\limits_{n\to\infty}\overline{j}_n({\overline{t}})$, $\overline{j}(0)=j(0)$, $\overline{j}(1)=j(0^+)$. 

In the case the pressure does not depend on $x$, i.e., $\nabla\overline{\pi}=0$, problem \eqref{eq3.4} is resolved in an explicit way
\[
(\mathrm{I}-\varkappa\Delta) \overline{\mathbf{v}}({x},{\overline{t}})=(\mathrm{I}-\varkappa\Delta)(\mathbf{v}_0({x})+(\overline{j}({\overline{t}})-j(0))\boldsymbol{\omega}({x})),
\]
and by using the fact $(\mathrm{I}-\varkappa\Delta)^{-1} (\mathrm{I}-\varkappa\Delta)=\mathrm{I}$, we have
\[
\overline{\mathbf{v}}({x},{\overline{t}})=\mathbf{v}_0({x})+(\overline{j}({\overline{t}})-j(0))\boldsymbol{\omega}({x}).
\]
We get the final trace
\begin{equation}
\label{eq3.5}
\overline{\mathbf{v}}({x},1)=\mathbf{v}_0({x})+(j(0^+)-j(0))\boldsymbol{\omega}({x}).
\end{equation}

\item[{\bf 2.}] The family of shifted solutions  {$\displaystyle\{(\widetilde{\mathbf{v}}_n,\widetilde{\varphi}_n)\}_{n\geq n_0}$} of problem \eqref{eq3.2} is strongly weakly relatively compact in the Cartesian product of two spaces $C([0,T];\mathbb{H})\times L^{2}(0,T)$. 
Therefore, there exist subsequences of $\displaystyle\{\widetilde{\mathbf{v}}_n\}_{n\geq n_0}$ and $\{\widetilde{\varphi}_n\}_{n\geq n_0}$, still labeled by $n$, and a limit vector-function $\mathbf{v}\in {C([0,T];\mathbb{H})}\cap L^2(0,T;\mathbb{V})$, and a scalar-function $\varphi\in L^{2}(0,T)$, such that
\begin{subequations}
\label{eq25}
\begin{align}
\widetilde{\mathbf{v}}_n \xrightarrow[n\to\infty]{}  \mathbf{v}\quad
&\text{strongly in }L^2(0,T;\mathbb{H}),
\\ \widetilde{\mathbf{v}}_n \xrightarrow[n\to\infty]{}  \mathbf{v}\quad
&\text{strongly in }{C([0,T];\mathbb{H})},
\\
\widetilde{\mathbf{v}}_n \xrightharpoonup[n\to\infty]{*} \mathbf{v}\quad
& \text{weakly$^*$ in } L^\infty(0,T;\mathbb{V}),
\\
|\mathbf{D}(\widetilde{\mathbf{v}}_n)|^{p-2}\mathbf{D}(\widetilde{\mathbf{v}}_n) \xrightharpoonup[n\to\infty]{} |\mathbf{D}(\mathbf{v})|^{p-2} \mathbf{D}(\mathbf{v}) \quad
&\text{weakly in } L^{\frac{p}{p-1}}(0,T;\widetilde{\mathbb{L}}^{\frac{p}{p-1}}),
\\
\widetilde{\varphi}_n \xrightharpoonup[n\to\infty]{} \varphi\quad
&\text{weakly in } L^{2}(0,T),
\end{align}
\end{subequations}
where the pair $(\mathbf{v},\varphi)$ is a weak solution  to the following inverse problem:
\begin{equation}
\label{eq3.6}
\begin{cases}
&\begin{split} &\partial_{t} \mathbf{v}-\mu \ \mathrm{div}\big(|\mathbf{D}(\mathbf{v})|^{p-2}\mathbf{D}(\mathbf{v})\big)+
\mathrm{div}(\mathbf{v}\otimes\mathbf{v})-\varkappa\Delta \partial_{t}\mathbf{v}
+\nabla\pi
\\
&\qquad=\varphi(t)\,(\boldsymbol{\omega}({x})-\varkappa\Delta\boldsymbol{\omega}({x})) \ \ \text{ in }Q_T,
\end{split}
\\
&\operatorname{div}\mathbf{v}=0 \ \ \text{ in }Q_T,
\\
&\int_\Omega \mathbf{v}({x},t)\cdot (\boldsymbol{\omega}({x})-\varkappa\Delta\boldsymbol{\omega}({x}))\,d{x}=j(t) \ \ \text{ in }(0,T),
\\
&\mathbf{v}({x},0)=\overline{\mathbf{v}}({x},1) \ \ \text{ in }\Omega,
\\
&\mathbf{v}({x},t)=\boldsymbol{0} \ \ \text{ on }\Gamma_T,
\end{cases}
\end{equation}
with a new initial velocity $\mathbf{v}({x},0)$, which is a final velocity $\overline{\mathbf{v}}({x},1)$ of problem \eqref{eq3.4}. In general case, see \eqref{eq3.5}, it follows that $\mathbf{v}({x},0)\neq\mathbf{v}_0(x)$.
\item[{\bf 3.}] The family of functions  {$\displaystyle\{\mathbf{v}_n\}_{n\geq n_0}$} is relatively compact in  $L^2(0,T;\mathbb{H})$, and  {$\{\mathbf{1}_{(1/n,T)}\varphi_n\}_{n\geq n_0}$ is relatively compact in $L^{2}(0,T)$}. Therefore, there exist subsequences of $\displaystyle\{\mathbf{v}_n\}_{n\geq n_0}$ and $\{\mathbf{1}_{(1/n,T)}\varphi_n\}_{n\geq n_0}$, still labeled by $n$, such that
\begin{subequations}
\label{eq27}
\begin{align}
\mathbf{v}_n  \xrightarrow[n\to\infty]{} \mathbf{v}\quad
&\text{strongly in } L^2(0,T;\mathbb{H}),
\\
\mathbf{v}_n \xrightharpoonup[n\to\infty]{*} \mathbf{v}\quad
& \text{weakly$^*$ in } L^\infty(0,T;\mathbb{V}),
\\
|\mathbf{D}({\mathbf{v}}_n)|^{p-2}\mathbf{D}({\mathbf{v}}_n) \xrightharpoonup[n\to\infty]{} |\mathbf{D}(\mathbf{v})|^{p-2} \mathbf{D}(\mathbf{v}) \quad
&\text{weakly in } L^{\frac{p}{p-1}}(0,T;\widetilde{\mathbb{L}}^{\frac{p}{p-1}}),
\\
{\mathbf{1}_{(\frac1n,T)}\varphi_n \xrightharpoonup[n\to\infty]{} \varphi}\quad
&\text{weakly in } L^{2}(0,T),
\end{align}
\end{subequations}
where the pair $(\mathbf{v},\varphi)$ is a weak solution  to problem \eqref{eq3.6}.
\end{itemize}
\end{theorem}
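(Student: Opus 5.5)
The proof splits into the three assertions, and each rests on estimates that are uniform in $n$, which we first transfer to the scaled and shifted variables.

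\textbf{Part 1 (scaled problem).} Scaling the estimates \eqref{eq:first-int} in $\overline{t}=nt$, $\overline{\mathbf{v}}_n$ is bounded in $L^\infty(0,1;\mathbb{V}\cap\mathbb{V}_p)$ uniformly in $n$. For the time derivative we do not use \eqref{eq:second-int}, which is not uniform. Instead we test \eqref{eq3.1d} with $\partial_{\overline{t}}\overline{\mathbf{v}}_n$, exactly as in \eqref{eq3.11}. Every term except $\overline{j}_n'$ carries a factor $1/n$ and involves only quantities bounded in $L^\infty$ by \eqref{eq:first-int}; we use $2(p-1)\le p$ and Ladyzhenskaya's inequality \eqref{Leq}. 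This gives
\[
\overline{\varphi}_n-\overline{j}_n'=O(1/n)\ \text{in } L^\infty(0,1),
\]
and, with ${\bf C}_7$, it also bounds $\partial_{\overline{t}}\overline{\mathbf{v}}_n$ in $L^2(0,1;\mathbb{V})$. Hence \eqref{eq3.3a} follows directly from ${\bf C}_7$. For compactness, the $L^2(0,1;\mathbb{V})$ bound on $\partial_{\overline{t}}\overline{\mathbf{v}}_n$ gives equicontinuity in $\mathbb{V}$. Lemma \ref{ALSlemma-1} with $X=\mathbb{V}$, $Y=\mathbb{H}$, $Z=\mathbb{V}$ then yields \eqref{eq3.3b}; in fact we even get strong convergence in $C([0,1];\mathbb{V})$ weakly. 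Passing to the limit in \eqref{eq2.04} (scaled form) is immediate: the $p$-Laplacian and convective terms vanish because they are $O(1/n)$ times bounded quantities, which gives \eqref{eq3.4}. The explicit formula follows by checking that $\mathbf{v}_0+(\overline{j}-j(0))\boldsymbol{\omega}$ solves \eqref{eq3.4} with $\nabla\overline{\pi}=0$; indeed $\boldsymbol\omega\in\mathbb{V}_p$ is solenoidal. Uniqueness of \eqref{eq3.4} holds by an energy argument, since the problem is linear with the $\mathbb{V}$-coercive operator $\mathrm{I}-\varkappa\Delta$ on solenoidal fields.

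\textbf{Part 2 (shifted problem).} The shifted solutions satisfy the bounds of Remark \ref{rem3.1}, uniform in $n$ by ${\bf C}_8$. Thus $\widetilde{\mathbf{v}}_n$ is bounded in $L^\infty(0,T;\mathbb{V}\cap\mathbb{V}_p)$, $\partial_{\widetilde t}\widetilde{\mathbf{v}}_n$ is bounded in $L^2(0,T;\mathbb{V})$, and $\widetilde{\varphi}_n$ is bounded in $L^2(0,T)$. The initial data $\overline{\mathbf{v}}_n(\cdot,1)$ converge in $\mathbb{H}$ by Part 1. Lemmas \ref{ALSlemma-1} and \ref{ALSlemma} give the strong convergences, and Banach--Alaoglu gives the weak ones, with weak limits $\chi$ of the $p$-Laplacian fluxes and $\varphi$ of $\widetilde{\varphi}_n$. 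The convective term passes to the limit by strong $L^2(0,T;\mathbb{H})$ convergence together with the $\mathbb{V}$ bound, through $L^4$ interpolation. The over-determination condition passes using ${\bf C}_4$, since $\widetilde{j}_n(\widetilde t)=j(\widetilde t+1/n)\to j(\widetilde t)$ for $\widetilde t>0$. The initial condition passes by the $C([0,T];\mathbb{H})$ convergence.

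\textbf{Main obstacle: identifying $\chi$.} We plan Minty's trick. Test the $n$-equation with $\widetilde{\mathbf{v}}_n$ and integrate over $(0,t)$. This gives
\[
\mu\int_0^t\!\!\int_\Omega|\mathbf{D}(\widetilde{\mathbf{v}}_n)|^p
=\tfrac12\big(\|\widetilde{\mathbf{v}}_n(0)\|_{\mathbb{H}}^2+\varkappa\|\nabla\widetilde{\mathbf{v}}_n(0)\|^2_{\mathbb{H}}\big)-\tfrac12\big(\cdots\big)(t)+\int_0^t\widetilde{\varphi}_n\widetilde{j}_n,
\]
where the convective term drops out. The right-hand side requires convergence of $\|\nabla\widetilde{\mathbf{v}}_n\|_{\mathbb{H}}$ at $\widetilde t=0$ and at $t$; this is why we need the strong $C([0,1];\mathbb{V})$ convergence in Part 1, obtained from $\partial_{\overline{t}}\overline{\mathbf{v}}_n$ bounded in $L^2\mathbb{V}$ together with the explicit limit. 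At $t$, we will instead use weak lower semicontinuity, which yields the right inequality. The term $\int\widetilde\varphi_n\widetilde j_n$ passes to the limit as a weak $\times$ strong product, since $\widetilde j_n\to j$ in $L^2$. Comparing with the corresponding energy identity for the limit, we obtain
\[
\limsup\int|\mathbf{D}(\widetilde{\mathbf{v}}_n)|^p\le\int\chi:\mathbf{D}(\mathbf{v}),
\]
and monotonicity then gives $\chi=|\mathbf{D}(\mathbf{v})|^{p-2}\mathbf{D}(\mathbf{v})$. Identifying $\varphi$ with formula \eqref{sphi_n} then follows from ${\bf C}_9$.

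\textbf{Part 3.} We write $\mathbf{v}_n(t)=\widetilde{\mathbf{v}}_n(t-1/n)$ for $t>1/n$. The difference $\mathbf{v}_n-\widetilde{\mathbf{v}}_n$ tends to zero in $L^2(0,T;\mathbb{H})$ by Corollary \ref{cor3.11}, using the time shift $h=1/n$ together with the uniform $L^\infty$ bound on the short interval $(0,1/n)$. The convergences \eqref{eq27} then follow from Part 2, and the limit is the same pair $(\mathbf{v},\varphi)$.
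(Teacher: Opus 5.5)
Your outline follows the paper's route. That means uniform bounds from \eqref{eq:first-int} and Remark~\ref{rem3.1}, then $\overline{\varphi}_n-\overline{j}_n'\to0$ together with $\mathbf{C}_7$, Aubin--Lions compactness in $C([0,1];\mathbb{H})$ and $C([0,T];\mathbb{H})$, monotonicity for the flux, and the shift estimate of Corollary~\ref{cor3.11} for item 3. In two respects you are more careful than the paper. First, uniqueness for the linear layer problem \eqref{eq3.4} gives the explicit formula and \eqref{eq3.5} for the whole sequence, with $\nabla\overline{\pi}=0$ as a consequence rather than an assumption. Second, you actually run Minty's argument, and you correctly see that the energy identity on $(0,t)$ needs $\|\nabla\widetilde{\mathbf{v}}_n(\cdot,0)\|_{\mathbb{H}}\to\|\nabla\mathbf{v}(\cdot,0)\|_{\mathbb{H}}$, because weak lower semicontinuity goes the wrong way at the initial time.

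The gap is that the reason you give for this norm convergence does not deliver it. A bound on $\partial_{\overline{t}}\overline{\mathbf{v}}_n$ in $L^2(0,1;\mathbb{V})$ gives equicontinuity in $\mathbb{V}$. Combined with the $L^\infty(0,1;\mathbb{V})$ bound, this yields only $\overline{\mathbf{v}}_n(\cdot,1)\rightharpoonup\overline{\mathbf{v}}(\cdot,1)$ weakly in $\mathbb{V}$. Knowing the limit explicitly does not upgrade weak convergence to strong; your phrase ``strong convergence in $C([0,1];\mathbb{V})$ weakly'' reflects exactly this tension. Without it, $\limsup_n\int_{Q_T}|\mathbf{D}(\widetilde{\mathbf{v}}_n)|^p\le\int_{Q_T}\chi:\mathbf{D}(\mathbf{v})$ is not established.

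The missing idea is the near-linear structure of the layer equation. Write $[\mathbf{a},\mathbf{b}]:=(\mathbf{a},\mathbf{b})+\varkappa(\nabla\mathbf{a},\nabla\mathbf{b})$, so that $\int_\Omega(\boldsymbol{\omega}-\varkappa\Delta\boldsymbol{\omega})\cdot\boldsymbol{\phi}\,dx=[\boldsymbol{\omega},\boldsymbol{\phi}]$ for $\boldsymbol{\phi}\in\mathbb{V}$. Then $\mathbf{u}_n:=\partial_{\overline{t}}\overline{\mathbf{v}}_n-\overline{\varphi}_n\boldsymbol{\omega}\in\mathbb{V}$ satisfies, for a.e.\ $\overline{t}$,
\[
[\mathbf{u}_n,\boldsymbol{\phi}]=-\frac{\mu}{n}\int_\Omega|\mathbf{D}(\overline{\mathbf{v}}_n)|^{p-2}\mathbf{D}(\overline{\mathbf{v}}_n):\nabla\boldsymbol{\phi}\,dx+\frac{1}{n}\int_\Omega(\overline{\mathbf{v}}_n\otimes\overline{\mathbf{v}}_n):\nabla\boldsymbol{\phi}\,dx .
\]
Testing with $\boldsymbol{\phi}=\mathbf{u}_n$ and using the $L^\infty(0,1;\mathbb{V})$ bound gives $\|\mathbf{u}_n\|_{L^\infty(0,1;\mathbb{V})}\le C/n$. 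Since $\overline{\mathbf{v}}_n(\cdot,\overline{t})=\mathbf{v}_0+\big(\int_0^{\overline{t}}\overline{\varphi}_n\big)\boldsymbol{\omega}+\int_0^{\overline{t}}\mathbf{u}_n$, \eqref{eq3.3a} yields convergence to $\mathbf{v}_0+(\overline{j}(\overline{t})-j(0))\boldsymbol{\omega}$ in $C([0,1];\mathbb{V})$. This gives the norm convergence at $\overline{t}=1$, and it makes item 1 elementary: no Aubin--Lions and no subsequence are needed.

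Alternatively, you can compare the layer energy identity
$\|\overline{\mathbf{v}}_n(\cdot,1)\|_{\mathbb{H}}^2+\varkappa\|\nabla\overline{\mathbf{v}}_n(\cdot,1)\|_{\mathbb{H}}^2=\|\mathbf{v}_0\|_{\mathbb{H}}^2+\varkappa\|\nabla\mathbf{v}_0\|_{\mathbb{H}}^2-\frac{2\mu}{n}\int_{Q_1}|\mathbf{D}(\overline{\mathbf{v}}_n)|^p\,dx\,d\overline{t}+2\int_0^1\overline{\varphi}_n\overline{j}_n\,d\overline{t}$
with the corresponding identity for \eqref{eq3.4}.

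Two smaller corrections:
\begin{itemize}
\item Lemma~\ref{ALSlemma-1} with $Y=\mathbb{H}$, $Z=\mathbb{V}$ is not admissible, since $\mathbb{H}$ does not embed into $\mathbb{V}$. Take $Z=\mathbb{H}$ instead.
\item In item 3, the flux convergence for $\mathbf{v}_n$ needs the shift estimate of Corollary~\ref{cor3.11} in the $\mathbb{V}$-norm, where it holds with $\omega(h)=Ch$ by \eqref{eq:third-int}. Combine this with the $(p-1)$-H\"older continuity of $\mathbf{A}\mapsto|\mathbf{A}|^{p-2}\mathbf{A}$ for $p\le2$; the $\mathbb{H}$-estimate alone is not enough.
\end{itemize}
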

\begin{remark}
Problem \eqref{eq3.1} is inverse, but the limit problem \eqref{eq3.4} is direct. This is explained by the fact that the term \eqref{sphi_n} in the limit as $n\to\infty$ does not depend on an unknown solution.
\end{remark}
The proof of this theorem is split into Secs. \ref{sec3.1}--\ref{sec3.3}.

\subsubsection*{\bf Scaling and shifting}
\vspace{2mm}
Following the carefully developed procedure outlined previously (see, for example, \cite{AKS-2024,AKSS-2024,AKSS-2024-2,KS-NARWA-2022,KS-JEPE-2022,KS-NA-TMA-2023}), let us consider some preliminary aspects before addressing the limiting passage as $n \to +\infty$ in \eqref{eq2.1}.

Assuming that the test vector function $\boldsymbol{\eta}$ in the equation \eqref{eq2.04} vanishes in a neighborhood of the subsection $\{t=T\}$, let us integrate the first summand on the left-hand side of \eqref{eq2.04} in $t$ by parts and write the resulting equality in an expanded form, separating the integrals over segments $(0,1/n)$ and $(1/n,T_n)$ from each other:
\begin{align}
& \int_0^{1/n} \int_\Omega \Big(-\mathbf{v}_n \cdot \partial_t \boldsymbol{\eta} - (\mathbf{v}_n \otimes \mathbf{v}_n) : \nabla \boldsymbol{\eta} + \mu \,
|\mathbf{D}(\mathbf{v}_n)|^{p-2}\mathbf{D}(\mathbf{v}_n):
\nabla \boldsymbol{\eta} \nonumber 
\\
& 
\quad - \varkappa \nabla \mathbf{v}_n : \nabla \partial_t\boldsymbol{\eta}-
\varphi_n(t)(\boldsymbol{\omega}({x}) -\varkappa\Delta\boldsymbol{\omega}({x})) \Big)\, d{x} dt \nonumber\\& \qquad - \int_\Omega \mathbf{v}_0({x}) \cdot \boldsymbol{\eta}({x},0)\, d{x} - \varkappa\int_\Omega \nabla\mathbf{v}_0({x}) : \nabla\boldsymbol{\eta}({x},0)\, d{x}  \nonumber 
\\
& + \int_{1/n}^{T_n} \int_\Omega \Big(-\mathbf{v}_n \cdot \partial_t \boldsymbol{\eta} - (\mathbf{v}_n \otimes \mathbf{v}_n) : \nabla \boldsymbol{\eta} + 
\mu|\mathbf{D}(\mathbf{v}_n)|^{p-2}\mathbf{D}(\mathbf{v}_n): \nabla \boldsymbol{\eta} \nonumber 
\\
& 
\quad - \varkappa \nabla \mathbf{v}_n : \nabla \partial_t\boldsymbol{\eta} -\varphi_n(t)(\boldsymbol{\omega}({x}) -\varkappa\Delta\boldsymbol{\omega}({x}))\Big)\, d{x} dt =0. \label{eq5.1}
\end{align}
Thus, \eqref{eq5.1} takes the form
\begin{align}
& \int_{Q_1}
\Big(-\overline{\mathbf{v}}_n({x},{\overline{t}}) \cdot \partial_{\overline{t}} \boldsymbol{\eta}({x},n^{-1}{\overline{t}})- \frac{1}{n} (\overline{\mathbf{v}}_n({x},{\overline{t}}) \otimes \overline{\mathbf{v}}_n({x},{\overline{t}})) : \nabla \boldsymbol{\eta}({x},n^{-1}{\overline{t}}) \nonumber 
\\[0.6ex]
&+\frac{\mu}{n}|\mathbf{D}(\overline{\mathbf{v}}_n) ({x},{\overline{t}})|^{p-2}\mathbf{D}(\overline{\mathbf{v}}_n )({x},{\overline{t}}) : \nabla \boldsymbol{\eta}({x},n^{-1}{\overline{t}})-\varkappa\nabla \overline{\mathbf{v}}_n ({x},{\overline{t}}) : \nabla \partial_{\overline{t}}\boldsymbol{\eta}({x},n^{-1}{\overline{t}})\nonumber 
\\[0.6ex]
&-\overline{\varphi}_n({\overline{t}})(\boldsymbol{\omega}({x})-\varkappa\Delta\boldsymbol{\omega}({x}))\cdot \boldsymbol{\eta}({x},n^{-1}{\overline{t}})\Big)\, d{x} d{\overline{t}} \nonumber 
\\[0.6ex]
&-\int_\Omega \mathbf{v}_0({x}) \cdot \boldsymbol{\eta}({x},0)\, d{x} -\varkappa \int_\Omega \nabla\mathbf{v}_0({x}) : \nabla\boldsymbol{\eta}({x},0)\, d{x}  
\nonumber 
\\[0.6ex]
& +\int_{Q_T}
\Big(-\widetilde{\mathbf{v}}_n ({x},\widetilde{t}) \cdot \partial_{\widetilde{t}}\, \boldsymbol{\eta}({x},\widetilde{t}+1/n)- (\widetilde{\mathbf{v}}_n({x},\widetilde{t}) \otimes \widetilde{\mathbf{v}}_n({x},\widetilde{t})):\nabla \boldsymbol{\eta}({x},\widetilde{t}+1/n) \nonumber 
\\[0.6ex]
& +\mu |\mathbf{D}(\widetilde{\mathbf{v}}_n) ({x},\widetilde{t})|^{p-2}\mathbf{D}(\widetilde{\mathbf{v}}_n )({x},\widetilde{t}) : \nabla \boldsymbol{\eta}({x},\widetilde{t} +1/n)-\varkappa\nabla \widetilde{\mathbf{v}}_n ({x},\widetilde{t}) : \nabla \partial_{\widetilde{t}}\,\boldsymbol{\eta}({x},\widetilde{t} +1/n) 
\nonumber 
\\[0.6ex]
&-\widetilde{\varphi}_n(\widetilde{t})(\boldsymbol{\omega}({x})-\varkappa\Delta\boldsymbol{\omega}({x}))\cdot \boldsymbol{\eta}({x},\widetilde{t} +1/n)
\Big)\, d{x} d\widetilde{t}=0. 
\label{eq5.4}
\end{align}
In accord with the further limiting passage as $n \to +\infty$ in \eqref{eq5.4}, we take the test vector-function $\boldsymbol{\eta}=\boldsymbol{\eta}_n({x},t)$ depending on $n$ as follows:
\begin{equation}
\label{eq5.5}
\boldsymbol{\eta}_n({x},t)=\left\{
\begin{array}{ll}
\displaystyle \overline{\boldsymbol{\eta}}({x},{\overline{t}}) \equiv \overline{\boldsymbol{\eta}}({x},nt) & \text{ for } t\in [0,1/n], \text{ i.e., for } {\overline{t}} \in [0,1], 
\\[2ex]
\widetilde{\boldsymbol{\eta}}({x},\widetilde{t}) \equiv \widetilde{\boldsymbol{\eta}}({x},t-1/n) & \text{ for } t\in [1/n,T_n],   \text{ i.e., for } \widetilde{t} \in [0,T],
\end{array}
\right.
\end{equation}
where $\overline{\boldsymbol{\eta}} = \overline{\boldsymbol{\eta}}({x},{\overline{t}})$ and $\widetilde{\boldsymbol{\eta}}=\widetilde{\boldsymbol{\eta}}({x},\widetilde{t})$ are arbitrary smooth test vector-functions defined on $\overline{\Omega} \times [0,1]$ and $\overline{\Omega} \times [0,T]$, respectively, such that $\overline{\boldsymbol{\eta}} = \widetilde{\boldsymbol{\eta}} \equiv 0$ holds in a neighbourhood of $\partial \Omega$, $\widetilde{\boldsymbol{\eta}} \equiv 0$ holds in a neighbourhood of the plane $\{\widetilde{t}=T\}$, and the matching condition
\begin{equation}
\label{eq5.6}
\overline{\boldsymbol{\eta}}({x},1^{-}) = \widetilde{\boldsymbol{\eta}}({x},0^+)
\end{equation}
holds.  We notice that condition \eqref{eq5.6} yields the weak derivative $\partial_t \boldsymbol{\eta}_n \in L^2(0,T_n;\mathbb{H})$, which implies $\boldsymbol{\eta}_n \! \in C([0,T_n];\mathbb{V})$, see \cite[Chapter 5, Section 5.9]{Evans}. Since 
\begin{align*}
\partial_{\overline{t}} \overline{\boldsymbol{\eta}}_n \in L^2(0,1;\mathbb{H})  \quad \text{and} \quad
\partial_{\widetilde{t}} \widetilde{\boldsymbol{\eta}}_n \in L^2(0,T;\mathbb{H}),   
\end{align*}
it follows, by combining the two time intervals, that 
$\partial_t \boldsymbol{\eta}_n \in L^2(0,T_n;\mathbb{H})$. Similarly, we  have
\begin{align*}
\overline{\boldsymbol{\eta}}_n \in L^2(0,1;\mathbb{V}^2)  \quad \text{and} \quad
\widetilde{\boldsymbol{\eta}}_n \in L^2(0,T;\mathbb{V}^2). 
\end{align*}
Hence, by combining the two time intervals,  we obtain
\begin{align*}
\boldsymbol{\eta}_n \in L^2(0,T_n;\mathbb{V}^2).
\end{align*}
Moreover, in view of the matching condition \eqref{eq5.6}, the one-sided traces are understood in the $\mathbb{H}$-sense, and there is no discontinuity in $\boldsymbol{\eta}_n$  at $t=\frac1n$, which corresponds to $\overline{t}=1$ and $\widetilde{t}=0$. Therefore, $\boldsymbol{\eta}_n$ is an admissible test vector function for \eqref{eq5.1} and \eqref{eq5.4}. 
Inserting \eqref{eq5.5} into \eqref{eq5.4}, we get
\begin{align}
& \int_{Q_1} \Big(-\overline{\mathbf{v}}_n({x},{\overline{t}}) \cdot \partial_{\overline{t}} \overline{\boldsymbol{\eta}}({x},{\overline{t}}) - \frac{1}{n} (\overline{\mathbf{v}}_n({x},{\overline{t}}) \otimes \overline{\mathbf{v}}_n({x},{\overline{t}})) : \nabla \overline{\boldsymbol{\eta}}({x},{\overline{t}})  \nonumber
\\[0.8ex]
& \quad+ \frac{\mu}{n}  \big(|\mathbf{D}(\overline{\mathbf{v}}_n) ({x},{\overline{t}})|^{p-2}\mathbf{D}(\overline{\mathbf{v}}_n )({x},{\overline{t}}) \big): \nabla \overline{\boldsymbol{\eta}}({x},{\overline{t}})- \varkappa\nabla \overline{\mathbf{v}}_n ({x},{\overline{t}}) : \nabla \partial_{\overline{t}}\overline{\boldsymbol{\eta}}({x},{\overline{t}}) \nonumber 
\\[0.8ex]
& \qquad-\overline{\varphi}_n({\overline{t}})(\boldsymbol{\omega}({x})-\varkappa\Delta\boldsymbol{\omega}(x))\cdot\overline{\boldsymbol{\eta}}({x},{\overline{t}}) \Big)\,d{x}d{\overline{t}}\nonumber 
\\[0.8ex]
& \qquad \quad-\int_\Omega \mathbf{v}_0({x}) \cdot \overline{\boldsymbol{\eta}}({x},0)\, d{x} -\varkappa\int_\Omega \nabla\mathbf{v}_0({x}) : \nabla \overline{\boldsymbol{\eta}}({x},0)\, d{x}  
\nonumber 
\\[0.8ex]
& +\int_{Q_T} \Big(-\widetilde{\mathbf{v}}_n ({x},\widetilde{t}) \cdot \partial_{\widetilde{t}}\, \widetilde{\boldsymbol{\eta}}({x},\widetilde{t}) - (\widetilde{\mathbf{v}}_n({x},\widetilde{t}) \otimes \widetilde{\mathbf{v}}_n({x},\widetilde{t})):\nabla \widetilde{\boldsymbol{\eta}}({x},\widetilde{t})\nonumber 
\\[0.8ex]
& \quad +\mu \, |\mathbf{D}(\widetilde{\mathbf{v}}_n) ({x},{\widetilde{t}})|^{p-2}\mathbf{D}(\widetilde{\mathbf{v}}_n )({x},{\widetilde{t}}) : \nabla \widetilde{\boldsymbol{\eta}}({x},\widetilde{t}) -\varkappa\nabla \widetilde{\mathbf{v}}_n ({x},\widetilde{t}) : \nabla \partial_{\widetilde{t}}\,\widetilde{\boldsymbol{\eta}}({x},\widetilde{t})\nonumber 
\\[0.8ex]
& \qquad 
-\widetilde{\varphi}_n(\widetilde{t})(\boldsymbol{\omega} ({x})-\varkappa\Delta\boldsymbol{\omega}(x))\cdot\widetilde{\boldsymbol{\eta}}({x},\widetilde{t})\Big)\, d{x} d\widetilde{t} =0.   \label{eq5.7}
\end{align}
Furthermore, we notice that
\begin{equation} \label{eq5.8}
\overline{\mathbf{v}}_n({x},1^-) = \widetilde{\mathbf{v}}_n({x},0^+) \quad \mbox{ in } \Omega
\end{equation}
in the sense of distributions, owing to the regularity properties of $\overline{\mathbf{v}}_n\in C([0,1],\mathbb{H})$ and $\widetilde{\mathbf{v}}_n\in C([0,T];\mathbb{H})$. The remainder of the proof of Theorem \ref{theorem3.1} is based  on a thorough study of \eqref{eq5.7}, taking into account \eqref{eq5.8}.

\subsection{\bf Proof of item 1 in Theorem \ref{theorem3.1}}
\label{sec3.1} 
\vspace{2mm}
We need to introduce a weak solution on  an infinitesimal initial layer.
\begin{definition}
\label{def3.1} A pair of functions $(\overline{\mathbf{v}}_n,\overline{\varphi}_n)$ is said to be a \textbf{weak solution} of the nonlinear inverse problem
\eqref{eq3.1} with initial condition $\mathbf{v}_0 \in \mathbb{V} \cap \mathbb{V}_p$ if
\begin{align*}
 \overline{\mathbf{v}}_n \in L^\infty(0,1; \mathbb{H} )\cap  L^\infty(0,1; \mathbb{V} )\cap L^p(0,1;\mathbb{V}_p),
\quad  \partial_{\overline{t}}\overline{\mathbf{v}}_n
\in L^{2}(0,1;\mathbb{V}), \quad
\overline{\varphi}_n\in L^{2}(0,1),
\end{align*}
and $(\overline{\mathbf{v}}_n,\overline{\varphi}_n)$  satisfies 
\begin{align*}
& \frac{d}{d{\overline{t}}}\int_\Omega
\Big(\overline{\mathbf{v}}_n({x},{\overline{t}})\cdot\boldsymbol{\phi}({x})+\varkappa\nabla\overline{\mathbf{v}}_n({x},{\overline{t}}):\nabla\boldsymbol{\phi}({x})\Big)\,d{x}
+\frac{\mu}{n}\int_\Omega|\mathbf{D}(\overline{\mathbf{v}}_n) ({x},{\overline{t}})|^{p-2}\mathbf{D}(\overline{\mathbf{v}}_n )({x},{\overline{t}}):\nabla\boldsymbol{\phi}({x})\,d{x}
\\
& -\frac1n\int_\Omega
(\overline{\mathbf{v}}_n({x},{\overline{t}})\cdot\nabla)\boldsymbol{\phi}({x})\cdot\overline{\mathbf{v}}_n({x},{\overline{t}})\,d{x} =\overline{\varphi}_n({\overline{t}})\int_\Omega(\boldsymbol{\omega}({x})-\varkappa\Delta \boldsymbol{\omega}(x))\cdot \boldsymbol{\phi}({x})\,d{x} \quad \text{for a.e. }{\overline{t}}\in(0,1),
\end{align*}
for every test function $\boldsymbol{\phi}\in\mathbb{V}$ and the corresponding compatibility condition
\begin{align*}
&\int_\Omega\mathbf{v}_0({x})\cdot(\boldsymbol{\omega}({x})-\varkappa\Delta \boldsymbol{\omega}({x}))\,d{x}=\overline{j}_n(0).
\end{align*}
\end{definition}
\begin{proposition}[Estimate for a scaled solution]
\label{proposition.2.1} Under Conditions \ref{C}, the problem \eqref{eq3.1} has {a unique weak solution} in the sense of Definition \ref{def3.1}, which satisfies the estimate
\begin{align}
&\underset{{\overline{t}}\in(0,1)}{\rm esssup} \Big (
\|\overline{\mathbf{v}}_n(\cdot,{\overline{t}})\|_{\mathbb{H}}^2 +\|\nabla \overline{\mathbf{v}}_n(\cdot,{\overline{t}})\|_{\mathbb{H}}^2 \Big)
+\frac{1}{n}\|\overline{\mathbf{v}}_n\|_{L^p(0,1;\mathbb{V}_p)}^p
\nonumber
\\
&\quad +
\|\partial_{\overline{t}}\overline{\mathbf{v}}_n\|_{L^2(0,1;\mathbb{V})}^2 
+\|\overline{\varphi}_n\|_{L^{2}(0,1)}\leq C,
\label{eq:3est-global-1}
\end{align}
where the constant $C$ does not depend on $n$. 
\end{proposition}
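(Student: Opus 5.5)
Existence and uniqueness for fixed $n$ require no new argument. The scaled problem \eqref{eq3.1} is just \eqref{eq2.1} restricted to $[0,1/n]$ in the variable $\overline{t}=nt$. So the solution of Theorem \ref{Theorem.2.1} (with the uniqueness proved above) gives, by restriction and rescaling, a weak solution in the sense of Definition \ref{def3.1}. Uniqueness transfers in the same way, or can be reproved verbatim with the coefficients $\mu/n$ and $1/n$. The real task is the estimate \eqref{eq:3est-global-1} uniformly in $n$, and I would redo the two energy estimates directly on \eqref{eq3.1d}, using the scaled functional $\overline{\varphi}_n(\overline{t},[\overline{\mathbf{v}}_n])$.

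\emph{First estimate.} I take $\overline{\mathbf{v}}_n$ as test function. The convective term vanishes, and the right-hand side becomes $2\overline{\varphi}_n\overline{j}_n$. Substituting the formula for $\overline{\varphi}_n$ and arguing as in \eqref{E2}--\eqref{E3}, every term except $\overline{j}_n'$ carries a factor $1/n$. I obtain
\[
\frac{d}{d\overline{t}}\big(\|\overline{\mathbf{v}}_n\|_{\mathbb{H}}^2+\varkappa\|\nabla\overline{\mathbf{v}}_n\|_{\mathbb{H}}^2\big)+\frac{c\mu}{n}\|\mathbf{D}(\overline{\mathbf{v}}_n)\|_{\widetilde{\mathbb{L}}^p}^p\le \frac{C}{n}\big(1+\|\overline{\mathbf{v}}_n\|_{\mathbb{H}}^2+\varkappa\|\nabla\overline{\mathbf{v}}_n\|_{\mathbb{H}}^2\big)+C|\overline{j}_n'(\overline{t})|.
\]
Here $\int_0^1|\overline{j}_n'|\,d\overline{t}=\int_0^{1/n}|j_n'|\,dt\le J$ by ${\bf C}_5$. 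Gronwall's inequality on $[0,1]$ then bounds the $L^\infty(0,1;\mathbb{V})$ norm and $\frac1n\|\overline{\mathbf{v}}_n\|^p_{L^p(0,1;\mathbb{V}_p)}$ (via Korn's inequality). In fact this is simply \eqref{eq:first-int} rewritten in the new variable.

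\emph{Second estimate.} I test with $\partial_{\overline{t}}\overline{\mathbf{v}}_n$. Rather than mimicking \eqref{E4}, which would produce the non-uniform quantity $\int|j_n'|^2$, I follow the route of \eqref{eq3.11}: keep the $p$-Laplacian term on the right-hand side instead of writing it as a time derivative. The left-hand side is $\|\partial_{\overline{t}}\overline{\mathbf{v}}_n\|_{\mathbb{H}}^2+\varkappa\|\nabla\partial_{\overline{t}}\overline{\mathbf{v}}_n\|_{\mathbb{H}}^2$, and the right-hand side consists of three terms:
\begin{itemize}
\item the convective term, bounded by $\frac1n\|\nabla\partial_{\overline{t}}\overline{\mathbf{v}}_n\|_{\mathbb{H}}\|\overline{\mathbf{v}}_n\|_{\widetilde{\mathbb{L}}^4}^2$;
\item the $p$-Laplacian term, bounded by $\frac{\mu}{n}\|\mathbf{D}(\overline{\mathbf{v}}_n)\|_{\widetilde{\mathbb{L}}^{2(p-1)}}^{p-1}\|\nabla\partial_{\overline{t}}\overline{\mathbf{v}}_n\|_{\mathbb{H}}$;
\item the forcing term $\overline{\varphi}_n\overline{j}_n'$, since $\int_\Omega \partial_{\overline{t}}\overline{\mathbf{v}}_n\cdot(\boldsymbol{\omega}-\varkappa\Delta\boldsymbol{\omega})\,dx=\overline{j}_n'$.
\end{itemize}
For the forcing term I use Young's inequality in the form $|\overline{\varphi}_n\overline{j}_n'|\le\frac12|\overline{\varphi}_n|^2+\frac12|\overline{j}_n'|^2$. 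Then $|\overline{\varphi}_n|^2\le 3|\overline{j}_n'|^2+\frac{C}{n^2}(\cdots)$, where $(\cdots)$ is controlled by the first estimate, using Ladyzhenskaya's inequality and $2(p-1)\le p$ together with the $L^\infty(\mathbb{V})$ bound, since $\mathbb{V}\subset\mathbb{V}_p$ for $p\le2$. Absorbing the $\nabla\partial_{\overline{t}}\overline{\mathbf{v}}_n$ factors by Young's inequality and integrating over $(0,1)$ gives
\[
\|\partial_{\overline{t}}\overline{\mathbf{v}}_n\|_{L^2(0,1;\mathbb{V})}^2\le C\Big(1+\int_0^1|\overline{j}_n'|^2\,d\overline{t}\Big),
\]
and ${\bf C}_7$ gives $\sup_n\|\overline{j}_n'\|_{L^2(0,1)}<\infty$. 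Finally, the bound $\|\overline{\varphi}_n\|_{L^2(0,1)}\le\|\overline{j}_n'\|_{L^2(0,1)}+C/n$ follows directly from the explicit formula for $\overline{\varphi}_n$ and the first estimate.

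The main obstacle I expect is precisely avoiding non-uniformity in this second step. In the unscaled variables $\int_0^{1/n}|j_n'|^2\,dt\sim n$, but after scaling the $L^2$ norm is measured in $\overline{t}$, and ${\bf C}_7$ makes it uniformly bounded. One must check that no term acquires a factor $n$: every factor involving $\overline{\mathbf{v}}_n$ comes with $1/n$, and the only $n$-independent source is $\overline{j}_n'$.
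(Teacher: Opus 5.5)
Your proposal is correct and follows essentially the same route as the paper. The $L^\infty(0,1;\mathbb{V})$ and $\frac1n L^p(0,1;\mathbb{V}_p)$ bounds are just \eqref{eq:first-int} rewritten in $\overline{t}$, $\|\overline{\varphi}_n\|_{L^2(0,1)}$ is bounded directly from the explicit formula (every solution-dependent term carries a factor $1/n$), and $\partial_{\overline{t}}\overline{\mathbf{v}}_n$ is controlled by the \eqref{eq3.11}-type computation \eqref{eq3.11'}--\eqref{E9''}, with ${\bf C}_{7}$ supplying the uniform bound on $\|\overline{j}_n'\|_{L^2(0,1)}$; one small correction is that your aside about \eqref{E4} being non-uniform holds only in the unscaled variables, since in $\overline{t}$ that route would also produce just $\|\overline{j}_n'\|_{L^2(0,1)}^2$.
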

\noindent{\bf Proof}. We apply Theorem \ref{Theorem.2.1}, and estimate \eqref{eq:first-int} yields
\begin{align}
\label{eq3.15}
&\underset{0 \leq {\overline{t}} \leq 1}{\rm esssup}  \Big( \|\overline{\mathbf{v}}_n(\cdot,{\overline{t}})\|_{\mathbb{H}}^2+\| \nabla \overline{\mathbf{v}}_n(\cdot,{\overline{t}})\|_{\mathbb{H}}^2 \Big)+\frac{1}{n}
\|\overline{\mathbf{v}}_n\|_{L^p(0,1;\mathbb{V}_p)}^p
\nonumber
\\
&
\leq \underset{0 \leq {t} \leq T_n }{\rm esssup}\Big(\|\mathbf{v}_n(\cdot,t)\|_{\mathbb{H}}^2+\|\nabla\mathbf{v}_n(\cdot,t)\|_{\mathbb{H}}^2\Big)+\|\mathbf{v}_n\|_{L^p(0,T_n;\mathbb{V}_p)}^p
\leq C,
\end{align}
and from these estimate, we obtain the uniform estimate $\overline{\varphi}_n\in L^{2}(0,1)$:
\begin{align*}
&\|\overline{\varphi}_n\|_{L^{2}(0,1)}=
\Big\|\overline{j}_n^{\prime}
(\cdot)+\frac{\mu}{n}\int_\Omega |\mathbf{D}\overline{\mathbf{v}}_n({x},\cdot)|^{p-2} \mathbf{D}\overline{\mathbf{v}}_n({x},\cdot)
:\nabla\boldsymbol{\omega}({x})\,d{x} \nonumber
\\
&\quad
-\frac1n\int_\Omega
(\overline{\mathbf{v}}_n({x},\cdot)\cdot\nabla)\boldsymbol{\omega}({x})\cdot\overline{\mathbf{v}}_n({x},\cdot)\,d{x}
\Big\|_{L^{2}(0,1)}
\\
&\leq
\|\overline{j}_n'\|_{L^{2}(0,1)}
+\frac{\mu}{n}
\Big\|\int_\Omega |\mathbf{D}\overline{\mathbf{v}}_n({x},\cdot)|^{p-2} \mathbf{D}\overline{\mathbf{v}}_n({x},\cdot)
:\nabla\boldsymbol{\omega}({x})\,d{x}\Big\|_{L^{2}(0,1)}
\\
&\quad
+\frac1n
\Big\|\int_\Omega
(\overline{\mathbf{v}}_n({x},\cdot)\cdot\nabla)\boldsymbol{\omega}({x})\cdot\overline{\mathbf{v}}_n({x},\cdot)\,d{x}\Big\|_{L^{2}(0,1)}\leq C,
\end{align*}
where $C$ does not depend on $n$. In order to estimate ${\partial_{\overline{t}}\overline{\mathbf{v}}_n}$ in $L^{2}(0,1;\mathbb{V})$, a calculation similar to \eqref{eq3.11} yields
\begin{align}
&\int_\Omega \Big(|\partial_{\overline{t}} \overline{\mathbf{v}}_n|^2
+\varkappa|\nabla\partial_{\overline{t}} \overline{\mathbf{v}}_n|^2 \Big)\,dx
\leq \frac{\varkappa}{4} \|\nabla\partial_{\overline{t}} \overline{\mathbf{v}}_n \|_{\mathbb{H}}^2 + C\| \overline{\mathbf{v}}_n \|_{\mathbb{H}}^{4(1-\frac{d}4)} \|\nabla \overline{\mathbf{v}}_n \|_{\mathbb{H}}^{d} 
\nonumber
\\
& 
\qquad+ \frac{\mu^2}{\varkappa}\| \mathbf{D}( \overline{\mathbf{v}}_n) \|_{\widetilde{\mathbb{L}}^{2(p-1)}}^{2(p-1)}
+\frac{\varkappa}{4}\|\nabla\partial_{\overline{t}} \overline{\mathbf{v}}_n \|_{\mathbb{H}}^2 
+\frac{1}{2}|\overline{\varphi}_n(\overline{t})|^2 + \frac{1}{2}|\overline{j}_n'(\overline{t})|^2,
\label{eq3.11'}
\end{align}
and integrate over $\overline{t}\in(0,1)$:
\begin{align}
\label{E9''}
&\|\partial_{\overline{t}}\overline{\mathbf{v}}_n \|_{L^{2}(0,1;\mathbb{H})}^2 
+\frac{\varkappa}{2} \, \|\nabla\partial_{\overline{t}}\overline{\mathbf{v}}_n \|_{L^{2}(0,1;\mathbb{H})}^2 
\leq C \|\overline{\mathbf{v}}_n \|_{L^\infty(0,1;\mathbb{H})}^{4-d}  \|\nabla\overline{\mathbf{v}}_n \|_{L^\infty(0,1;\mathbb{H})}^d  
\nonumber
\\
& \quad+ \frac{\mu^2}{\varkappa} \int_{0}^{1}\| \mathbf{D}(\overline{\mathbf{v}}_n) (\cdot,\overline{t})\|_{\widetilde{\mathbb{L}}^{2(p-1)}}^{2(p-1)}\,d\overline{t} +\frac{1}{2} \int_{0}^{1}|\overline{\varphi}_n(\overline{t})|^2 \,d\overline{t}+\frac{1}{2} \int_0^1|\overline{j}_n'(\overline{t})|^2 \,d\overline{t},
\end{align}
which leads to ${\partial_{\overline{t}}\overline{\mathbf{v}}_n} \in L^{2}(0,1;\mathbb{V})$ for $p \in (1,2]$. \null\nobreak\hfill\ensuremath{\square}

To complete the proof of item 1 of Theorem \ref{theorem3.1}, we must prove \eqref{eq3.3a} and \eqref{eq3.3b}.
The integral equi-continuity of $\{\overline{\varphi}_n\}_{n\geq n_0}$ in $L^{2}(0,1)$ is easily established.
The convergence \eqref{eq3.3a} follows from the representation
\begin{align*}
{\overline{\varphi}_n({\overline{t}})-
\overline{j}'({\overline{t}})}&=
\frac{\mu}{n}\int_\Omega |\mathbf{D}\overline{\mathbf{v}}_n({x},{\overline{t}})|^{p-2} \mathbf{D}\overline{\mathbf{v}}_n({x},{\overline{t}}):\nabla\boldsymbol{\omega}({x})\,d{x}
\\
&\quad
-\frac1n\int_\Omega
(\overline{\mathbf{v}}_n({x},{\overline{t}})\cdot\nabla)\boldsymbol{\omega}({x})\cdot\overline{\mathbf{v}}_n({x},{\overline{t}})\,d{x} +\overline{j}_n'({\overline{t}})-\overline{j}'({\overline{t}}).
\end{align*}
Applying the H\"older inequality, we need to prove the convergence of $\{\overline{\varphi}_n\}_{n\geq n_0}$ only in $L^2(0,1)$.
We deduce that
\begin{align}\label{Equicon}
&\left(\int_0^1|\overline{\varphi}_n({\overline{t}})-\overline{j}'({\overline{t}})|^2\,d{\overline{t}}\right)^{\frac12}
\leq
\frac{\mu}{n^{\frac1p}}\left(\frac1n\int_{Q_1}
|\mathbf{D}\overline{\mathbf{v}}_n({x},{\overline{t}})|^p\,d{x}d{\overline{t}}\right)^{\frac{p-1}p}
\left(\int_\Omega|\nabla\boldsymbol\omega (x)|^p\,d{x}\right)^{\frac1p}
\nonumber
\\
&
\quad+\frac1n \|\nabla\boldsymbol\omega \|_{\widetilde{\mathbb{L}}^\infty}\underset{{\overline{t}}\in(0,1)}{\rm esssup}
\int_\Omega|\overline{\mathbf{v}}_n({x},{\overline{t}})|^2\,d{x} 
+\left(\int_0^1|\overline{j}_n'({\overline{t}})
-\overline{j}'({\overline{t}})|^2\,d{\overline{t}}\right)^{\frac12}.
\end{align}
Using condition ${\bf C}_{7}$ in \eqref{Equicon}, we infer that
\begin{align*}
\left(\int_0^1|\overline{\varphi}_n({\overline{t}})-\overline{j}'({\overline{t}})|^2\,d{\overline{t}}\right)^{\frac12}
\longrightarrow 0  \ \ \text{as} \ n \longrightarrow \infty.
\end{align*}
This leads to $\{\overline{\varphi}_n\}_{n\geq n_0}$ converging strongly in $L^{2}(0,1)$ to $\overline{j}'({\overline{t}})$.
In order to prove \eqref{eq3.3b}, we need to show that the family $\{\overline{\mathbf{v}}_n\}_{n\geq n_0}$  is relatively compact in $C([0,1];\mathbb{H})$. This follows from the Aubin--Lions--Simon lemma in $C^0$ \cite[Th. A.52]{Bedrossian}, we prove equi-continuity:
\begin{align*}
&\int_\Omega|\overline{\mathbf{v}}_n(x,\overline{t}+h)-\overline{\mathbf{v}}_n(x,\overline{t})
|^2\,dx=\int_\Omega
\int_{\overline{t}}^{\overline{t}+h}
\partial_\vartheta\overline{\mathbf{v}}_n(x,\vartheta)\,d\vartheta\cdot\big(\overline{\mathbf{v}}_n(x,\overline{t}+h)-\overline{\mathbf{v}}_n(x,\overline{t})\big)\,dx
\\
&\leq 
\int_\Omega
\int_{\overline{t}}^{\overline{t}+h}
\vert\partial_\vartheta\overline{\mathbf{v}}_n(x,\vartheta)\vert\,d\vartheta\cdot(
\vert\overline{\mathbf{v}}_n(x,\overline{t}+h)\vert+\vert\overline{\mathbf{v}}_n(x,\overline{t})
\vert)\,dx
\\
&\leq 2\int_{\overline{t}}^{\overline{t}+h}
\left(\int_\Omega
\vert\partial_\vartheta\overline{\mathbf{v}}_n(x,\vartheta)\vert^2\,dx\right)^{\frac12}
\,d\vartheta
\left(\underset{{\overline{t}}\in(0,1)}{\rm esssup}\int_\Omega\vert\overline{\mathbf{v}}_n(x,\overline{t})\vert^2\,dx\right)^{\frac12}
\\
&\leq 
2\left(\int_{Q_1}\vert\partial_\vartheta\overline{\mathbf{v}}_n(x,\vartheta)\vert^2\,dxd\vartheta\right)^{\frac{1}{2}}
h^{\frac1{2}}
\left(\underset{{\overline{t}}\in(0,1)}{\rm esssup}\int_\Omega\vert\overline{\mathbf{v}}_n(x,\overline{t})\vert^2\,dx\right)^{\frac12}
\leq C h^{\frac1{2}}.
\end{align*}
 Therefore, there exist subsequences of $\{\overline{\mathbf{v}}_n\}_{n\geq n_0}$, still labeled by $n$, which converges weakly to a function. We need to show that this function is $\overline{\mathbf{v}}({x},{\overline{t}})$ satisfies problem \eqref{eq3.4}.

We put in \eqref{eq5.7} the test function $\widetilde{\boldsymbol{\eta}}\equiv0$ in the domain $Q_T$, and take $\overline{\boldsymbol{\eta}}\theta_k(1-\overline{t})$, where $\theta_k(s)=sk\mathbf{1}_{(0,1/k]}(s)+
\mathbf{1}_{(1/k,1]}(s)$ approximates the Heaviside function. Here, we assume that $\overline{\boldsymbol{\eta}}$ has non-zero traces at $\overline{t}=1$.  These choices of the test functions lead to
\begin{align*}
& \int_{Q_1} \theta_k(1-\overline{t})\Big(-\overline{\mathbf{v}}_n({x},{\overline{t}}) \cdot \partial_{\overline{t}} \overline{\boldsymbol{\eta}}({x},{\overline{t}})
-\varkappa\nabla\overline{\mathbf{v}}_n({x},{\overline{t}}):\nabla \partial_{\overline{t}} \overline{\boldsymbol{\eta}}({x},{\overline{t}})
- \frac{1}{n} (\overline{\mathbf{v}}_n({x},{\overline{t}}) \otimes \overline{\mathbf{v}}_n({x},{\overline{t}})) : \nabla\overline{\boldsymbol{\eta}}({x},{\overline{t}})
\\
& \quad+ \frac{\mu}{ n}
|\mathbf{D}\overline{\mathbf{v}}_n ({x},{\overline{t}})|^{p-2}
\mathbf{D}\overline{\mathbf{v}}_n ({x},{\overline{t}}) : \nabla \overline{\boldsymbol{\eta}}({x},{\overline{t}})-\overline{\varphi}_n({\overline{t}})(\boldsymbol{\omega}({x})-\varkappa\Delta \boldsymbol{\omega}({x}))\cdot\overline{\boldsymbol{\eta}}({x},{\overline{t}}) \Big)\,d{x}d{\overline{t}} 
\\
& \quad-\int_\Omega \mathbf{v}_0({x}) \cdot \overline{\boldsymbol{\eta}}({x},0)\, d{x}
-\varkappa\int_\Omega \nabla\mathbf{v}_0({x}):\nabla\overline{\boldsymbol{\eta}}({x},0)\, d{x}
\\
& \quad +k\int_{1-\frac1k}^1\int_\Omega \overline{\mathbf{v}}_n({x},\overline{t}) \cdot \overline{\boldsymbol{\eta}}({x},\overline{t})\,d{x}d\overline{t} +k\int_{1-\frac1k}^1\int_\Omega \nabla\overline{\mathbf{v}}_n({x},\overline{t}) : \nabla\overline{\boldsymbol{\eta}}({x},\overline{t})\,d{x}d\overline{t}=0,
\end{align*}
with subsequent passing to the limit as $k\to\infty$:
\begin{align*}
& \int_{Q_1} \Big(-\overline{\mathbf{v}}_n({x},{\overline{t}}) \cdot \partial_{\overline{t}} \overline{\boldsymbol{\eta}}({x},{\overline{t}})
-\varkappa\nabla\overline{\mathbf{v}}_n({x},{\overline{t}}):\nabla \partial_{\overline{t}} \overline{\boldsymbol{\eta}}({x},{\overline{t}})
- \frac{1}{n} (\overline{\mathbf{v}}_n({x},{\overline{t}}) \otimes \overline{\mathbf{v}}_n({x},{\overline{t}})) : \nabla\overline{\boldsymbol{\eta}}({x},{\overline{t}})
\\
& \quad+ \frac{\mu}{ n}|\mathbf{D}\overline{\mathbf{v}}_n ({x},{\overline{t}})|^{p-2}
\mathbf{D}\overline{\mathbf{v}}_n ({x},{\overline{t}}) : \nabla \overline{\boldsymbol{\eta}}({x},{\overline{t}})-\overline{\varphi}_n({\overline{t}})(\boldsymbol{\omega}({x})-\varkappa\Delta \boldsymbol{\omega}({x}))\cdot\overline{\boldsymbol{\eta}}({x},{\overline{t}}) \Big)\,d{x}d{\overline{t}} 
\\
& \quad-\int_\Omega \mathbf{v}_0({x}) \cdot \overline{\boldsymbol{\eta}}({x},0)\, d{x}
-\varkappa\int_\Omega \nabla\mathbf{v}_0({x}):\nabla\overline{\boldsymbol{\eta}}({x},0)\, d{x}
\\
& \quad +\int_\Omega \overline{\mathbf{v}}_n({x},1) \cdot \overline{\boldsymbol{\eta}}({x},1)\,d{x} + \varkappa\int_\Omega \nabla\overline{\mathbf{v}}_n({x},1) : \nabla\overline{\boldsymbol{\eta}}({x},1)\,d{x}
=0.
\end{align*}
Let us accomplish the passage to the limit in problem \eqref{eq3.1}. Proposition \ref{proposition.2.1} involves 
\begin{align*}
&\left|
\frac{\mu}{n}\int_{Q_1}
|\mathbf{D}\overline{\mathbf{v}}_n|^{p-2}
\mathbf{D}\overline{\mathbf{v}}_n:\nabla\overline{\boldsymbol{\eta}}\,
d{x}d{\overline{t}}\right|
 \leq \frac{\mu}{n^{\frac{1}{p}}}
\left( \frac{1}{n}\int_{Q_1}|\mathbf{D}\overline{\mathbf{v}}_n|^{p}\,d{x}d{\overline{t}}\right)^{(p-1)/p}
{\|\overline{\boldsymbol{\eta}}\|_{L^p(0,1;\mathbb{V}_p)}}\longrightarrow 0,
\\
&\left|\frac{1}{n}\int_{Q_1}
(\overline{\mathbf{v}}_n\cdot\nabla)\overline{\boldsymbol{\eta}}\cdot \overline{\mathbf{v}}_n\,d{x}d{\overline{t}} \right|\leq
\frac{C}{n}
\underset{{\overline{t}}\in(0,1)}{\mathrm{esssup}}\|\overline{\mathbf{v}}_n(\cdot,{\overline{t}})\|_{\mathbb{H}}^{1-\frac{d}{4}}
\underset{{\overline{t}}\in(0,1)}{\mathrm{esssup}}\|\nabla\overline{\mathbf{v}}_n(\cdot,{\overline{t}})\|_{\mathbb{H}}^{\frac{d}{4}} 
\\
& \qquad \qquad \qquad \qquad \qquad \qquad \times{\|\overline{\boldsymbol{\eta}}\|_{L^2(0,1;\mathbb{V})}}\longrightarrow0,
\end{align*}
as $n\longrightarrow \infty$, {for all $\overline{\boldsymbol{\eta}}\in L^2(0,1;\mathbb{V}) \cap L^p(0,1;\mathbb{V}_p)$.} Therefore, in the limit, we obtain
\begin{align*}
& \int_{Q_1} \Big(-\overline{\mathbf{v}}({x},{\overline{t}}) \cdot \partial_{\overline{t}} \overline{\boldsymbol{\eta}}({x},{\overline{t}})
-\varkappa\nabla\overline{\mathbf{v}}({x},{\overline{t}}):\nabla\partial_{\overline{t}} \overline{\boldsymbol{\eta}}({x},{\overline{t}})
-\overline{\varphi}(\overline{t})(\boldsymbol{\omega}({x})-\varkappa\Delta \boldsymbol{\omega}({x}))\cdot\overline{\boldsymbol{\eta}}({x},{\overline{t}}) \Big)\,d{x}d{\overline{t}}
\\
&\quad-\int_\Omega \mathbf{v}_0({x}) \cdot \overline{\boldsymbol{\eta}}({x},0)\, d{x}
-\varkappa\int_\Omega\nabla\mathbf{v}_0({x}):\nabla\overline{\boldsymbol{\eta}}({x},0)\, d{x}
\\
& \quad +\int_\Omega \overline{\mathbf{v}}_n({x},1) \cdot \overline{\boldsymbol{\eta}}({x},1)\,d{x} + \varkappa\int_\Omega \nabla\overline{\mathbf{v}}({x},1) : \nabla\overline{\boldsymbol{\eta}}({x},1)\,d{x}
=0,
\end{align*}
where is a weak formulation of the problem \eqref{eq3.4} with $\overline{\varphi}(\overline{t})=\overline{j}^\prime(\overline{t})$.
 \null\nobreak\hfill\ensuremath{\square}

\subsection{\bf Proof of item 2 in Theorem \ref{theorem3.1}}
\label{sec3.2}
\vspace{2mm}
In this subsection, we formulate the definition of weak solutions to problem \eqref{eq3.2}.
\begin{definition}
\label{def3.2w} A pair of functions $(\widetilde{\mathbf{v}}_n,\widetilde{\varphi}_n)$ is said to be a \textbf{weak solution} of the nonlinear inverse problem \eqref{eq3.2} with initial condition $\overline{\mathbf{v}}_n({x},1) \in \mathbb{V} \cap \mathbb{V}_p$ if
\begin{align*}
&\widetilde{\mathbf{v}}_n\in {L^\infty(0,T;\mathbb{H})} \cap L^\infty(0,T;\mathbb{V})\cap L^p(0,T;\mathbb{V}_p),
\quad
\partial_{\widetilde{t}}\widetilde{\mathbf{v}}_n\in 
L^{2}(0,T;\mathbb{V}), \quad \widetilde{\varphi}_n\in L^{2}(0,T),
\end{align*}
and $(\widetilde{\mathbf{v}}_n,\widetilde{\varphi}_n)$  satisfies 
\begin{align*}
& \frac{d}{d\widetilde{t}}\int_\Omega
\Big(\widetilde{\mathbf{v}}_n({x},\widetilde{t})
\cdot\boldsymbol{\phi}({x})+\varkappa\nabla\widetilde{\mathbf{v}}_n({x},\widetilde{t})
:\nabla\boldsymbol{\phi}({x})\Big)\,d{x}
+\mu\int_\Omega|\mathbf{D}\widetilde{\mathbf{v}}_n({x},\widetilde{t})|^{p-2}\mathbf{D}\widetilde{\mathbf{v}}_n({x},\widetilde{t})
:\nabla\boldsymbol{\phi}({x})\,d{x} \\
& -\int_\Omega
(\widetilde{\mathbf{v}}_n({x},\widetilde{t})
\cdot\nabla)\boldsymbol{\phi}({x})\cdot\widetilde{\mathbf{v}}_n({x},\widetilde{t})
\,d{x}=\widetilde{\varphi}_n(\widetilde{t})\int_\Omega (\boldsymbol{\omega}({x})-\varkappa \Delta\boldsymbol{\omega}(x))\cdot\boldsymbol{\phi}({x})\,d{x} \, \, \, \text{ for a.e. }\widetilde{t}\in(0,T),
\end{align*}
for every test function
$\boldsymbol{\phi}\in\mathbb{V}$ and the corresponding compatibility condition
\begin{align*}
&\int_\Omega
\overline{\mathbf{v}}_n({x},1)\cdot(\boldsymbol{\omega}({x})-\varkappa \Delta\boldsymbol{\omega}(x))\,d{x}=\widetilde{j}_n(0).
\end{align*}
\end{definition}
\begin{proposition}[\textbf{Estimate for a shifted solution}]
\label{proposition.2.2} 
Under Conditions \ref{C}, the problem
\eqref{eq3.2} has a unique weak solution in the sense
of Definition \ref{def3.2w}, which satisfies the estimate
\begin{align}
&\underset{\widetilde{t}\in(0,T)}{\rm esssup} \Big(\|\widetilde{\mathbf{v}}_n(\cdot,\widetilde{t})
\|_{\mathbb{H}}^2+\|\nabla\widetilde{\mathbf{v}}_n(\cdot,\widetilde{t})
\|_{\mathbb{H}}^2 \Big)+
\|\widetilde{\mathbf{v}}_n\|_{L^p(0,T;\mathbb{V}_p)}^p 
+\|\widetilde{\varphi}_n\|_{L^{2}(0,T)}^2
\nonumber
\\
&\quad+
\|\partial_{\widetilde{t}}\widetilde{\mathbf{v}}_n\|_{L^{2}(0,T;\mathbb{V})}^2
\leq C,
\label{eq:est-global-11}
\end{align}
where the constant $C$ does not depend on $n$. 
\end{proposition}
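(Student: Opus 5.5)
Existence and uniqueness for \eqref{eq3.2} require no new argument. The shifted pair $(\widetilde{\mathbf{v}}_n,\widetilde{\varphi}_n)$ is simply the restriction of the solution $(\mathbf{v}_n,\varphi_n)$ of \eqref{eq2.1} to $[1/n,T_n]$, translated in time. Theorem \ref{Theorem.2.1} and the uniqueness argument of Section \ref{sec3} give a unique solution on $[0,T_n]$. Uniqueness for \eqref{eq3.2} also follows directly by the same Gronwall argument \eqref{Uniqueness1}--\eqref{Uniqueness2}, now started at $\widetilde t=0$ with the common initial datum $\overline{\mathbf{v}}_n(\cdot,1)$. By Proposition \ref{proposition.2.1} this datum lies in $\mathbb{V}$, uniformly in $n$, and it lies in $\mathbb{V}_p$ because $\mathbf{v}_n\in L^\infty(0,T_n;\mathbb{V}_p)$ for fixed $n$. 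The compatibility condition $\widetilde j_n(0)=j_n(1/n)$ is inherited from the overdetermination condition at $t=1/n$.

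For the uniform estimate \eqref{eq:est-global-11}, the first group of terms is immediate. The $L^\infty(0,T;\mathbb{H})$, $L^\infty(0,T;\mathbb{V})$ and $L^p(0,T;\mathbb{V}_p)$ norms of $\widetilde{\mathbf{v}}_n$ are bounded by the corresponding norms of $\mathbf{v}_n$ on $(0,T_n)$, and these are bounded uniformly in $n$ by \eqref{eq:first-int}. The remaining terms $\|\widetilde\varphi_n\|_{L^2(0,T)}$ and $\|\partial_{\widetilde t}\widetilde{\mathbf{v}}_n\|_{L^2(0,T;\mathbb{V})}$ are the delicate part. One cannot use \eqref{eq:second-int} directly, since $\int_0^{T_n}|j_n'|^2dt\sim n$ blows up. Instead I redo the second energy estimate \eqref{E4}--\eqref{E7} for problem \eqref{eq3.2d} on $(0,T)$, that is, after the impulse, as indicated in Remark \ref{rem3.1}. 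Testing with $\partial_{\widetilde t}\widetilde{\mathbf{v}}_n$ and estimating the convective term as in \eqref{E5} and $\widetilde\varphi_n\widetilde j_n'$ as in \eqref{E6}, I obtain after integration
\[
\int_0^{\widetilde t}\big(\|\partial_s\widetilde{\mathbf{v}}_n\|_{\mathbb{H}}^2+\tfrac{\varkappa}{2}\|\nabla\partial_s\widetilde{\mathbf{v}}_n\|_{\mathbb{H}}^2\big)ds+\tfrac{\mu}{p}\|\mathbf{D}(\widetilde{\mathbf{v}}_n)(\widetilde t)\|_{\widetilde{\mathbb{L}}^p}^p\le \tfrac{\mu}{p}\|\mathbf{D}(\overline{\mathbf{v}}_n)(\cdot,1)\|_{\widetilde{\mathbb{L}}^p}^p+C\int_0^T\|\widetilde{\mathbf{v}}_n\|_{\mathbb{V}_p}^p+C\Big(1+\int_0^T|\widetilde j_n'|^2\Big).
\]
Condition ${\bf C}_8$ bounds the last integral by $J_1^2$, and the middle integral is already controlled by the first estimate.

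The main obstacle is the initial term $\|\mathbf{D}(\overline{\mathbf{v}}_n)(\cdot,1)\|_{\widetilde{\mathbb{L}}^p}^p$, which must be bounded uniformly in $n$. Since $p\le2$ and $\Omega$ is bounded, Hölder's inequality gives $\|\mathbf{D}(\overline{\mathbf{v}}_n)(\cdot,1)\|_{\widetilde{\mathbb{L}}^p}\le C\|\nabla\overline{\mathbf{v}}_n(\cdot,1)\|_{\mathbb{H}}$. The right-hand side is uniformly bounded by Proposition \ref{proposition.2.1}, since $\overline{\mathbf{v}}_n\in C([0,1];\mathbb{V})$ because $\partial_{\overline t}\overline{\mathbf{v}}_n\in L^2(0,1;\mathbb{V})$. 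This supersedes the non-uniform $\mathbb{V}_p$ bound. Once $\partial_{\widetilde t}\widetilde{\mathbf{v}}_n$ is bounded in $L^2(0,T;\mathbb{V})$, the $L^2(0,T)$ bound on $\widetilde\varphi_n$ follows from the explicit formula \eqref{sphi_n}. Its first term is $\widetilde j_n'$, bounded by ${\bf C}_8$. The second is bounded by $\mu\|\mathbf{D}(\widetilde{\mathbf{v}}_n)\|_{\widetilde{\mathbb{L}}^p}^{p-1}\|\nabla\boldsymbol\omega\|_{\widetilde{\mathbb{L}}^p}$, which lies in $L^\infty(0,T)$ by the previous step (or by $\mathbb{V}\hookrightarrow\mathbb{V}_p$). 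The third is bounded by $\|\nabla\boldsymbol\omega\|_{\widetilde{\mathbb{L}}^\infty}\|\widetilde{\mathbf{v}}_n\|_{\mathbb{H}}^2$. Combining these with the first group of bounds yields \eqref{eq:est-global-11} with $C$ independent of $n$.
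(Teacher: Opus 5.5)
Your proposal is correct and follows the paper's route: restrict the solution of \eqref{eq2.1} to $[1/n,T_n]$, take the first group of bounds from \eqref{eq:first-int}, and redo the second energy estimate only after the impulse, as in Remark \ref{rem3.1}, so that ${\bf C}_8$ replaces the non-uniform quantity $\int_0^{T_n}|j_n'|^2\,dt$. You are more careful than the paper at the one delicate point: integrating from $t=1/n$ produces the initial term $\frac{\mu}{p}\|\mathbf{D}(\overline{\mathbf{v}}_n)(\cdot,1)\|_{\widetilde{\mathbb{L}}^p}^p$, not the $\frac{\mu}{p}\|\mathbf{v}_0\|_{\mathbb{V}_p}^p$ written in Remark \ref{rem3.1}, and your bound of it via $p\le 2$, H\"older's inequality and the uniform $L^\infty(0,T_n;\mathbb{V})$ estimate is exactly what makes the constant independent of $n$.
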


\noindent{\bf Proof.}
An estimate similar to \eqref{eq:first-int} and \eqref{eq:second-int} gives
\begin{align*}
&\|\widetilde{\mathbf{v}}_n\|
{L^\infty(0,T;\mathbb{H})}^2+\|\widetilde{\mathbf{v}}_n\|_{L^\infty(0,T;\mathbb{V})}^2+
\|\widetilde{\mathbf{v}}_n\|_{L^p(0,T;\mathbb{V}_p)}^p
+\|\partial_{\widetilde{t}}\widetilde{\mathbf{v}}_n\|
{L^2(0,T;\mathbb{V})}^2+\|\widetilde{\varphi}n\|_{L^2(0,T)}^2\leq C,
\end{align*}
where $C$ does not depend on $n$,  as shown in Remark \ref{rem3.1}.
\null\nobreak\hfill\ensuremath{\square}

The integral equi-continuity in $\widetilde{t}$ for $\{\widetilde{\mathbf{v}}_n\}_{n\geq n_0}$ follows from Corollary \ref{cor3.11}. According to the Aubin--Lions lemma in $C^0$ version, see Lemma \ref{ALSlemma-1}, that  
$\{\widetilde{\mathbf{v}}_n\}_{n\geq n_0}$ is relatively compact in $L^2(0,T;\mathbb{H})$.
\begin{corollary}
\label{cor4.1}
The family $\{\widetilde{\mathbf{v}}_n\}_{n\geq n_0}$ is integral equicontinuous in time in $L^2(0,T;\mathbb{H})$.
\end{corollary}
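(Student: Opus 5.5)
We want: the family $\{\widetilde{\mathbf{v}}_n\}_{n\ge n_0}$ is integral equicontinuous in time in $L^2(0,T;\mathbb{H})$, i.e.
\[
\int_0^{T-h}\|\widetilde{\mathbf{v}}_n(\cdot,\widetilde t+h)-\widetilde{\mathbf{v}}_n(\cdot,\widetilde t)\|_{\mathbb{H}}^2\,d\widetilde t\le \omega_1(h),\qquad \omega_1(h)\to0 \text{ as } h\to0^+,
\]
uniformly in $n$. The plan is a direct estimate via the time derivative, using only Proposition \ref{proposition.2.2}, in the same spirit as the equicontinuity computation done for $\overline{\mathbf{v}}_n$ in the proof of item 1.

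First, since $\partial_{\widetilde t}\widetilde{\mathbf{v}}_n\in L^2(0,T;\mathbb{V})$, the function $\widetilde{\mathbf{v}}_n$ is absolutely continuous with values in $\mathbb{H}$. Hence, for a.e. $\widetilde t$,
\[
\widetilde{\mathbf{v}}_n(\cdot,\widetilde t+h)-\widetilde{\mathbf{v}}_n(\cdot,\widetilde t)=\int_{\widetilde t}^{\widetilde t+h}\partial_\vartheta\widetilde{\mathbf{v}}_n(\cdot,\vartheta)\,d\vartheta
\]
in $\mathbb{H}$. Minkowski's and H\"older's inequalities then give the pointwise bound
\[
\|\widetilde{\mathbf{v}}_n(\cdot,\widetilde t+h)-\widetilde{\mathbf{v}}_n(\cdot,\widetilde t)\|_{\mathbb{H}}^2\le h\int_{\widetilde t}^{\widetilde t+h}\|\partial_\vartheta\widetilde{\mathbf{v}}_n(\cdot,\vartheta)\|_{\mathbb{H}}^2\,d\vartheta .
\]
Integrating in $\widetilde t\in(0,T-h)$ and applying Fubini (each $\vartheta$ is counted over an interval of length at most $h$) yields
\[
\int_0^{T-h}\|\widetilde{\mathbf{v}}_n(\cdot,\widetilde t+h)-\widetilde{\mathbf{v}}_n(\cdot,\widetilde t)\|_{\mathbb{H}}^2\,d\widetilde t\le h^2\|\partial_{\widetilde t}\widetilde{\mathbf{v}}_n\|_{L^2(0,T;\mathbb{H})}^2\le Ch^2 .
\]
Here $C$ is the $n$-independent constant from estimate \eqref{eq:est-global-11}, so one may take $\omega_1(h)=Ch^2$.

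The only point requiring care is the uniformity of $C$ in $n$. By Remark \ref{rem3.1}, the $L^2$-bound on $\partial_t\mathbf{v}_n$ is uniform only on $(1/n,T_n)$, because $\int_0^{1/n}|j_n'|^2\,dt$ blows up. This is exactly why one works with the shifted functions $\widetilde{\mathbf{v}}_n$ rather than with $\mathbf{v}_n$ on the whole interval $(0,T_n)$. Condition $\mathbf{C}_8$ ensures that $\int_0^T|\widetilde j_n'|^2\,d\widetilde t\le J_1^2$, and Proposition \ref{proposition.2.2} already incorporates this.

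Alternatively, the weaker $L^1$ route of Corollary \ref{cor3.11} combined with the uniform $L^\infty(0,T;\mathbb{H})$ bound would also work: write $\|a-b\|^2\le (\|a\|+\|b\|)\|a-b\|$. The $L^2$ route above, however, is cleaner and gives an explicit modulus.
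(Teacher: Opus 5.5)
Your argument is correct, but your main route differs from the paper's. The paper undoes the shift $t=\widetilde t+1/n$, so that the quantity becomes an integral over $(1/n,T_n-h)$ of $\|\mathbf v_n(\cdot,t+h)-\mathbf v_n(\cdot,t)\|^2$ (it also includes the gradient term, so it actually works in $\mathbb V$). It then appeals to Corollary \ref{cor3.11}, which rests only on the uniform $L^1(0,T_n;\mathbb V)$ bound \eqref{eq:third-int} for $\partial_t\mathbf v_n$. That bound holds on the whole interval, including the initial layer. The step from that $L^1$-type modulus to the squared one is left implicit there; it is exactly your ``alternative'' $\|a-b\|^2\le(\|a\|+\|b\|)\|a-b\|$ combined with the uniform $L^\infty(0,T_n;\mathbb V)$ bound.

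You instead stay inside the shifted problem and use the $L^2(0,T;\mathbb V)$ bound on $\partial_{\widetilde t}\widetilde{\mathbf v}_n$ from Proposition \ref{proposition.2.2}. This gives a sharper, explicit modulus $Ch^2$, and the same computation applied to $\nabla\partial_{\widetilde t}\widetilde{\mathbf v}_n$ gives the $\mathbb V$-version too.

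The trade-off is in what each route depends on. Yours relies on the second energy estimate being uniform in $n$ away from the layer, i.e.\ on Remark \ref{rem3.1} and condition $\mathbf C_8$. Strictly, it also relies on controlling the restart term $\frac{\mu}{p}\|\mathbf D(\mathbf v_n)(\cdot,1/n)\|_{\widetilde{\mathbb L}^p}^p$ rather than $\frac{\mu}{p}\|\mathbf v_0\|_{\mathbb V_p}^p$; this is harmless, since $p\le 2$ and $\mathbf v_n$ is bounded in $L^\infty(0,T_n;\mathbb V)$. The paper's route needs only $\mathbf C_5$ through \eqref{eq:third-int}. It also ties the modulus of $\widetilde{\mathbf v}_n$ directly to that of $\mathbf v_n$, which is what the next corollary, comparing $\widetilde{\mathbf v}_n$ with $\mathbf v_n$, uses.
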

\noindent{\bf Proof.} This follows from the equi-continuity, see Corollary \ref{cor3.11}:
\begin{align*}
&\int_0^{T-h}\int_\Omega
|\widetilde{\mathbf{v}}_n({x},
\widetilde{t}+h)-
\widetilde{\mathbf{v}}_n({x},\widetilde{t})|^2\,d{x}d\widetilde{t}+
\int_0^{T-h}\int_\Omega
|\nabla(\widetilde{\mathbf{v}}_n({x},
\widetilde{t}+h)-
\widetilde{\mathbf{v}}_n({x},\widetilde{t}))|^2\,d{x}d\widetilde{t}=
\\
&\quad=\int_{\frac1n}^{T_n-h}\int_\Omega
|\mathbf{v}_n({x},t+h)-
\mathbf{v}_n({x},t)|^2\,d{x}dt
\\
& \qquad+\int_{\frac1n}^{T_n-h}\int_\Omega
|\nabla(\mathbf{v}_n({x},t+h)-
\mathbf{v}_n({x},t))|^2\,d{x}dt\le C\omega(h),
\end{align*}
where $C$ does not depend on $n$ and $h$, and $\omega(h)$ is defined in Corollary \ref{cor3.11}.
\null\nobreak\hfill\ensuremath{\square}

\begin{corollary}
The family $\{\widetilde{\mathbf{v}}_n\}_{n\geq n_0}$ coincides in the limit with  $\{\mathbf{v}_n\}_{n\geq n_0}$ in $L^2(0,T;\mathbb{V})$.
\end{corollary}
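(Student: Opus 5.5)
The plan is to make the statement quantitative: I will prove that
\[
\|\widetilde{\mathbf{v}}_n-\mathbf{v}_n\|_{L^2(0,T;\mathbb{V})}\longrightarrow 0\quad\text{as } n\to\infty,
\]
where $\mathbf{v}_n$ is restricted to $(0,T)\subset(0,T_n)$. Since $\widetilde{\mathbf{v}}_n(\cdot,t)=\mathbf{v}_n(\cdot,t+\frac1n)$, the difference is just a time shift of $\mathbf{v}_n$ by $h=\frac1n$. So the statement is essentially Corollary \ref{cor3.11} with the step $h$ tied to $n$. I prefer to redo that computation directly, because it gives an explicit rate.

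\textbf{Step 1 (pointwise-in-time representation).} For fixed $n$, Theorem \ref{Theorem.2.1} gives $\partial_t\mathbf{v}_n\in L^2(0,T_n;\mathbb{V})$. Hence $\mathbf{v}_n\in C([0,T_n];\mathbb{V})$ is absolutely continuous, and for every $t\in[0,T]$
\[
\|\mathbf{v}_n(\cdot,t+\tfrac1n)-\mathbf{v}_n(\cdot,t)\|_{\mathbb{V}}\le\int_t^{t+\frac1n}\|\partial_s\mathbf{v}_n(\cdot,s)\|_{\mathbb{V}}\,ds .
\]

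\textbf{Step 2 ($L^1$ bound via Fubini).} I integrate the inequality of Step 1 over $t\in(0,T)$ and exchange the order of integration. Each $s\in(0,T_n)$ belongs to $[t,t+\frac1n]$ for a set of $t$ of measure at most $\frac1n$. Therefore
\[
\int_0^T\|\widetilde{\mathbf{v}}_n-\mathbf{v}_n\|_{\mathbb{V}}\,dt\le\frac1n\,\|\partial_t\mathbf{v}_n\|_{L^1(0,T_n;\mathbb{V})}\le\frac{C}{n}.
\]
The last inequality uses the uniform estimate \eqref{eq:third-int}, which controls both $\|\partial_t\mathbf{v}_n\|_{L^1(0,T_n;\mathbb{H})}$ and $\|\nabla\partial_t\mathbf{v}_n\|_{L^1(0,T_n;\mathbb{H})}$ with $C$ independent of $n$.

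\textbf{Step 3 (upgrade $L^1$ to $L^2$).} I interpolate with the uniform $L^\infty$ bound from \eqref{eq:first-int}:
\[
\int_0^T\|\widetilde{\mathbf{v}}_n-\mathbf{v}_n\|_{\mathbb{V}}^2\,dt\le 2\,\underset{t\in(0,T_n)}{\rm esssup}\|\mathbf{v}_n(\cdot,t)\|_{\mathbb{V}}\int_0^T\|\widetilde{\mathbf{v}}_n-\mathbf{v}_n\|_{\mathbb{V}}\,dt\le\frac{C}{n}\to0 .
\]
This gives the required convergence with rate $n^{-1/2}$ in $L^2(0,T;\mathbb{V})$. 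In particular, any limit of $\widetilde{\mathbf{v}}_n$ in the topologies of \eqref{eq25} is also a limit of $\mathbf{v}_n$ on $(0,T)$, which is what item 3 will need.

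\textbf{Main obstacle.} The main point is to use only estimates that are uniform in $n$. The $L^2$-in-time bound on $\partial_t\mathbf{v}_n$ in \eqref{eq:second-int} degenerates because of $\int_0^{1/n}|j_n'|^2\,dt$, so it cannot be used. That is why Step 2 relies on the $L^1$ bound \eqref{eq:third-int}, and Step 3 combines it with the $L^\infty(0,T_n;\mathbb{V})$ bound.
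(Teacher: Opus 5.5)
Your argument is correct and is essentially the paper's. The paper just cites Corollary \ref{cor3.11} (the time-shift estimate coming from the uniform $L^1(0,T_n;\mathbb{V})$ bound \eqref{eq:third-int} on $\partial_t\mathbf{v}_n$) with $h=\frac1n$, together with Corollary \ref{cor4.1}; you make this explicit, including the $L^1\to L^2$ upgrade through the uniform $L^\infty(0,T_n;\mathbb{V})$ bound that the paper leaves implicit, and obtain the rate $n^{-1/2}$. One caution about your closing remark: the limit transfers to $\mathbf{v}_n$ strongly in $L^2(0,T;\mathbb{H})$ and weakly$^*$ in $L^\infty(0,T;\mathbb{V})$, but not in $C([0,T];\mathbb{H})$, since in general $\mathbf{v}_n(\cdot,0)=\mathbf{v}_0\neq\mathbf{v}(\cdot,0)$.
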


\noindent{\bf Proof.}
This follows from Corollaries \ref{cor3.11} and \ref{cor4.1}.
\null\nobreak\hfill\ensuremath{\square}

\begin{corollary}
The family $\{\widetilde{\mathbf{v}}_n\}_{n\geq n_0}$ is relatively compact in $C([0,T];\mathbb{H})$.
\end{corollary}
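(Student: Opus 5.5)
We will obtain the compactness of $\{\widetilde{\mathbf{v}}_n\}_{n\geq n_0}$ in $C([0,T];\mathbb{H})$ from the Aubin--Lions lemma in its $C^0$ version, Lemma \ref{ALSlemma-1}. We take the triple $X=\mathbb{V}$, $Y=\mathbb{H}$, $Z=\mathbb{H}$. The embedding $\mathbb{V}\hookrightarrow\hookrightarrow\mathbb{H}$ is compact by Rellich's theorem on the bounded domain $\Omega$, and all three spaces are separable Hilbert spaces, hence reflexive. So two hypotheses remain to be checked.

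The first hypothesis, a uniform bound in $X$, comes directly from Proposition \ref{proposition.2.2}. Estimate \eqref{eq:est-global-11} gives
\[
\underset{\widetilde{t}\in(0,T)}{\rm esssup}\,\|\nabla\widetilde{\mathbf{v}}_n(\cdot,\widetilde{t})\|_{\mathbb{H}}\le C
\]
with $C$ independent of $n$. Since $\partial_{\widetilde{t}}\widetilde{\mathbf{v}}_n\in L^2(0,T;\mathbb{V})$, each $\widetilde{\mathbf{v}}_n$ belongs to $C([0,T];\mathbb{V})$. Therefore the essential supremum is in fact a pointwise bound, $\|\widetilde{\mathbf{v}}_n(\widetilde{t})\|_{\mathbb{V}}\le M$ for every $\widetilde{t}\in[0,T]$.

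The second hypothesis is uniform equicontinuity with values in $Z=\mathbb{H}$. Here we do not rely on the integral equicontinuity of Corollary \ref{cor3.11}, which is only of $L^1$ type. Instead we use the uniform $L^2(0,T;\mathbb{V})$ bound on $\partial_{\widetilde{t}}\widetilde{\mathbf{v}}_n$ from Proposition \ref{proposition.2.2}. This bound is uniform in $n$ because it is the estimate of Remark \ref{rem3.1} on $(1/n,T_n)$, which involves only $\widetilde{j}_n'$, and $\widetilde{j}_n'$ is bounded in $L^2(0,T)$ by condition ${\bf C}_8$. For $0\le s<\widetilde{t}\le T$ we write $\widetilde{\mathbf{v}}_n(\widetilde{t})-\widetilde{\mathbf{v}}_n(s)=\int_s^{\widetilde{t}}\partial_\vartheta\widetilde{\mathbf{v}}_n\,d\vartheta$ and apply the Cauchy--Schwarz inequality in time:
\[
\|\widetilde{\mathbf{v}}_n(\widetilde{t})-\widetilde{\mathbf{v}}_n(s)\|_{\mathbb{V}}\le |\widetilde{t}-s|^{1/2}\,\|\partial_{\widetilde{t}}\widetilde{\mathbf{v}}_n\|_{L^2(0,T;\mathbb{V})}\le C|\widetilde{t}-s|^{1/2}.
\]
By Poincar\'e's inequality the same H\"older-$\tfrac12$ modulus holds in $\mathbb{H}$. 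This is exactly the computation already done for $\overline{\mathbf{v}}_n$ in the proof of item 1.

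Lemma \ref{ALSlemma-1} then yields precompactness of $\{\widetilde{\mathbf{v}}_n\}$ in $C([0,T];\mathbb{H})$. The compact embedding is standard, so the main obstacle is the $n$-uniformity of the time-derivative bound. The second energy estimate \eqref{eq:second-int} on the full interval $(0,T_n)$ contains $\int_0^{1/n}|j_n'|^2\,dt$, which blows up like $n$. One must therefore rederive \eqref{E7} starting from $t=1/n$, with initial datum $\widetilde{\mathbf{v}}_n(0)=\overline{\mathbf{v}}_n(\cdot,1)$. This produces the term $\frac{\mu}{p}\|\mathbf{D}(\overline{\mathbf{v}}_n(\cdot,1))\|^p_{\widetilde{\mathbb{L}}^p}$ in place of $\frac{\mu}{p}\|\mathbf{v}_0\|^p_{\mathbb{V}_p}$. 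That term is bounded uniformly because $p\le2$ and $\Omega$ is bounded, so it is controlled by the uniform bound $\|\nabla\overline{\mathbf{v}}_n(\cdot,1)\|_{\mathbb{H}}\le C$ from Proposition \ref{proposition.2.1}. With this checked, Gronwall's inequality on $(0,T)$ gives a bound independent of $n$, and the argument closes.
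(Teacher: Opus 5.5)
Your proof is correct and follows the paper's route: a uniform $L^\infty(0,T;\mathbb{V})$ bound plus uniform equicontinuity in $\mathbb{H}$ coming from the $n$-uniform $L^2$ bound on $\partial_{\widetilde{t}}\widetilde{\mathbf{v}}_n$, fed into Lemma \ref{ALSlemma-1}. The differences are minor. The paper pairs the increment with itself and uses the $L^2(0,T;\mathbb{H})$ derivative bound, which gives modulus $h^{1/4}$, whereas you integrate the $\mathbb{V}$-valued derivative directly and get $h^{1/2}$. Your explicit check that the shifted second energy estimate starts from $\frac{\mu}{p}\|\mathbf{D}(\overline{\mathbf{v}}_n(\cdot,1))\|^p_{\widetilde{\mathbb{L}}^p}$, not $\frac{\mu}{p}\|\mathbf{v}_0\|^p_{\mathbb{V}_p}$, is a valid refinement of what Remark \ref{rem3.1} asserts without proof.
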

\noindent{\bf Proof.} Since $\{\widetilde{\mathbf{v}}_n\}_{n\geq n_0}$ is uniformly bounded in $L^\infty(0,T;\mathbb{V})$, we get
\begin{align}
&\int_\Omega|\widetilde{\mathbf{v}}_n({x},\widetilde{t}+h)
-\widetilde{\mathbf{v}}_n({x},\widetilde{t})|^2\,d{x}
\nonumber
\\&
=\Big\vert
\int_{\widetilde{t}}^{\widetilde{t}+h}
\int_\Omega\partial_s\widetilde{\mathbf{v}}_n({x},s)\cdot
(\widetilde{\mathbf{v}}_n({x},\widetilde{t}+h)-\widetilde{\mathbf{v}}_n({x},\widetilde{t}))
\,d{x}ds\Big\vert\nonumber
\\
&
\leq \int_{\widetilde{t}}^{\widetilde{t}+h}
\|\widetilde{\mathbf{v}}_n(\cdot,\widetilde{t}+h)-\widetilde{\mathbf{v}}_n(\cdot,\widetilde{t})\|_{\mathbb{H}}
\|\partial_{s}\widetilde{\mathbf{v}}_n(\cdot,s)\|_{\mathbb{H}}\,ds
\nonumber
\\
&
\leq 
2 h^{\frac1{2}}\left(\int_0^T
\|\partial_{s}\widetilde{\mathbf{v}}_n(\cdot,s)\|_{\mathbb{H}}^2\,ds\right)^{\frac{1}{2}}
\underset{\widetilde{t}\in(0,T)}{\mathrm{esssup}}
\|\widetilde{\mathbf{v}}_n(\cdot,\widetilde{t})\|_{\mathbb{H}}\leq Ch^{\frac1{2}},
\label{eq3.23}
\end{align}
where $C$ does not depend on $n$ and $h$. Due to the Aubin--Lions lemma in $C^0$ version, see Lemma \ref{ALSlemma-1}, we prove that $\{\widetilde{\mathbf{v}}_n\}_{n\geq n_0}$ is relatively compact in 
$C([0,T];\mathbb{H})$.
\null\nobreak\hfill\ensuremath{\square}

We need to pass to the limit as $n\to\infty$. To this end, we put $\overline{\boldsymbol{\eta}}\equiv0$ in \eqref{eq5.7} and rearrange a test function $\widetilde{\boldsymbol{\eta}}$ on the plane $\widetilde{t}=0$ as the product $\widetilde{\boldsymbol{\eta}}$ and the approximation of the Heaviside function $\theta_k(\widetilde{t})=k\widetilde{t}\mathbf{1}_{[0,1/k]}(\widetilde{t})+\mathbf{1}_{(1/k,T)}(t)$:
\begin{align*}
&\int_{Q_T} \theta_k(\widetilde{t})\Big(-\widetilde{\mathbf{v}}_n ({x},\widetilde{t}) \cdot \partial_{\widetilde{t}}\, \widetilde{\boldsymbol{\eta}}({x},\widetilde{t})
-\varkappa\nabla\widetilde{\mathbf{v}}_n ({x},\widetilde{t}):\nabla \partial_{\widetilde{t}}\, \widetilde{\boldsymbol{\eta}}({x},\widetilde{t})
- (\widetilde{\mathbf{v}}_n({x},\widetilde{t}) \otimes \widetilde{\mathbf{v}}_n({x},\widetilde{t})):\nabla \widetilde{\boldsymbol{\eta}}({x},\widetilde{t}) 
\\[0.8ex]
&\quad
+\mu|\mathbf{D}\widetilde{\mathbf{v}}_n ({x},\widetilde{t})|^{p-2}\mathbf{D}\widetilde{\mathbf{v}}_n ({x},\widetilde{t}) : \nabla \widetilde{\boldsymbol{\eta}}({x},\widetilde{t})
-\widetilde{\varphi}_n(\widetilde{t})(\boldsymbol{\omega}({x})-\varkappa\Delta\boldsymbol{\omega}({x}))\cdot\widetilde{\boldsymbol{\eta}}({x},\widetilde{t})\Big)\, d{x} d\widetilde{t} 
\\[0.8ex]
&\quad
-k\int_0^{\frac1k}\int_\Omega\widetilde{\mathbf{v}}_n({x},\widetilde{t})
\cdot\widetilde{\boldsymbol{\eta}}({x},\widetilde{t})\,d{x}d\widetilde{t}
-\varkappa \, k\int_0^{\frac1k}\int_\Omega \nabla\widetilde{\mathbf{v}}_n({x},\widetilde{t})
: \nabla \widetilde{\boldsymbol{\eta}}({x},\widetilde{t})\,d{x}d\widetilde{t}
=0.
\end{align*}
Next, we pass to the limit as $k\to\infty$, and encompass the traces at $\widetilde{t}=0$ by the rules \eqref{eq5.8}:
\begin{align*}
&\int_{Q_T} \Big(-\widetilde{\mathbf{v}}_n ({x},\widetilde{t}) \cdot \partial_{\widetilde{t}}\, \widetilde{\boldsymbol{\eta}}({x},\widetilde{t})
-\varkappa\nabla\widetilde{\mathbf{v}}_n ({x},\widetilde{t}):\nabla \partial_{\widetilde{t}}\, \widetilde{\boldsymbol{\eta}}({x},\widetilde{t})
- (\widetilde{\mathbf{v}}_n({x},\widetilde{t}) \otimes \widetilde{\mathbf{v}}_n({x},\widetilde{t})):\nabla \widetilde{\boldsymbol{\eta}}({x},\widetilde{t}) 
\\[0.8ex]
&\quad
+\mu|\mathbf{D}\widetilde{\mathbf{v}}_n ({x},\widetilde{t})|^{p-2}\mathbf{D}\widetilde{\mathbf{v}}_n ({x},\widetilde{t}) : \nabla \widetilde{\boldsymbol{\eta}}({x},\widetilde{t})
-\widetilde{\varphi}_n(\widetilde{t})(\boldsymbol{\omega}({x})-\varkappa\Delta\boldsymbol{\omega}({x}))\cdot\widetilde{\boldsymbol{\eta}}({x},\widetilde{t})\Big)\, d{x} d\widetilde{t} 
\\[0.8ex]
&\quad-\int_\Omega\overline{\mathbf{v}}_n({x},1)
\cdot\widetilde{\boldsymbol{\eta}}({x},0)\,d{x}
-\varkappa\int_\Omega\nabla\overline{\mathbf{v}}_n({x},1)
:\nabla\widetilde{\boldsymbol{\eta}}({x},0)\,d{x}
=0.
\end{align*}
Using the energy estimate \eqref{eq:est-global-11} along with the Banach--Alaoglu theorem, we can extract subsequences of $\{\widetilde{\mathbf{v}}_n\}_{n \geq n_0}$, still labeled by $n$, such that the following convergences hold:
\begin{equation}\label{Weak conv}
\left\{
\begin{aligned}
|\mathbf{D}(\widetilde{\mathbf{v}}_n)|^{p-2}\mathbf{D}(\widetilde{\mathbf{v}}_n) \xrightharpoonup[n\to\infty]{} \mathbf{A} & \ \text{ weakly in } (L^{\frac{p}{p-1}}(Q_T))^{d^2},\\
\widetilde{\mathbf{v}}_n \xrightharpoonup[n\to\infty]{*} \mathbf{v} & \ \text{ weakly$^*$ in } L^\infty(0,T;\mathbb{V}).
\end{aligned}\right.
\end{equation}
Using the strong convergence of $\widetilde{\mathbf{v}}_n$ in $L^2(0,T;\mathbb{H})$, we can also choose a further subsequence, still labeled by $n$, such that
\begin{align*}
\widetilde{\mathbf{v}}_n(t) \xrightarrow[n\to\infty]{} \mathbf{v}(t) \ \ \text{in} \ \mathbb{H} \ \quad \text{ for a.e.} \ t \in (0,T).
\end{align*}
We obtain the following integral identity in the limit:
\begin{align*}
&\int_{Q_T} \Big(-\mathbf{v}({x},\widetilde{t}) \cdot \partial_{\widetilde{t}}\, \widetilde{\boldsymbol{\eta}}({x},\widetilde{t})
-\varkappa\nabla\mathbf{v}({x},\widetilde{t}):\nabla \partial_{\widetilde{t}}\, \widetilde{\boldsymbol{\eta}}({x},\widetilde{t})
- (\mathbf{v}({x},\widetilde{t}) \otimes\mathbf{v}({x},\widetilde{t})):\nabla \widetilde{\boldsymbol{\eta}}({x},\widetilde{t}) 
\\[0.8ex]
&\quad
+\mu\mathbf{A}({x},\widetilde{t}):\nabla \widetilde{\boldsymbol{\eta}}({x},\widetilde{t})
-\varphi(\widetilde{t})(\boldsymbol{\omega}({x})-\varkappa\Delta\boldsymbol{\omega}({x}))\cdot\widetilde{\boldsymbol{\eta}}({x},\widetilde{t})\Big)\, d{x} d\widetilde{t} 
\\[0.8ex]
&\qquad-\int_\Omega\mathbf{v}({x},1)
\cdot\widetilde{\boldsymbol{\eta}}({x},0)\,d{x}
-\varkappa\int_\Omega\nabla\mathbf{v}({x},1)
:\nabla\widetilde{\boldsymbol{\eta}}({x},0)\,d{x}
=0,
\end{align*}
with
\begin{align*}
\varphi(\widetilde{t})&=j'(\widetilde{t})
+\mu\int_\Omega \mathbf{A}(x,\widetilde{t}):\nabla\boldsymbol{\omega}({x})\,d{x}
-\int_\Omega
(\mathbf{v}({x},\widetilde{t})\otimes\mathbf{v}({x},\widetilde{t}))
:\nabla\boldsymbol{\omega}({x})\,d{x}.
\end{align*}
It is clear that the limit solution $(\mathbf{v},\varphi)$, where $\mathbf{v}=\lim\limits_{n\to\infty}\widetilde{\mathbf{v}}_n$ and $\varphi=\lim\limits_{n\to\infty}\widetilde{\varphi}_n$, satisfies the problem \eqref{eq3.6}. With the help of the monotonicity method, we show that $\mathbf{A}=
|\mathbf{D}(\mathbf{v})|^{p-2}\mathbf{D}(\mathbf{v}).$

\subsection{\bf Proof of item 3 in Theorem \ref{theorem3.1}}
\label{sec3.3}
\vspace{2mm}
In item 2, we have shown that the limit solution $\mathbf{v}(x,t)=\lim\limits_{n\to\infty}\widetilde{\mathbf{v}}_n(x,t)$ satisfies the new initial data
\[
\mathbf{v}({x},0)=\lim\limits_{n\to\infty}
\widetilde{\mathbf{v}}_n({x},0)=
\overline{\mathbf{v}}({x},1)
\]
in a strong sense. We have proven that the family $\{\partial_t\mathbf{v}_{n}\}_{n\geq n_0}$ is uniformly bounded in $L^1(0,T;\mathbb{V})$. This fact, along with the uniform boundedness of $\{\mathbf{v}_{n}\}_{n\geq n_0}$ in $L^\infty(0,T;\mathbb{V})$, is sufficient to apply the Aubin-Lions lemma in $L^p$ version, see Lemma \ref{ALSlemma}, and to prove the relative compactness of $\{\mathbf{v}_n\}_{n\geq n_0}$ in $L^2(0,T;\mathbb{H})$. There exists a subsequence that strongly converges in $L^2(0,T;\mathbb{H})$ to $\mathbf{v}$, which is the limit of $\{\widetilde{\mathbf{v}}_n\}_{n\geq n_0}$
\[
\int_0^T\|\mathbf{v}_n(\cdot,\widetilde{t})-
\widetilde{\mathbf{v}}_n(\cdot,\widetilde{t})\|_{\mathbb{H}}^2\,d\widetilde{t}\longrightarrow 0\text{ as }n\longrightarrow \infty,
\]
due to Corollary \ref{cor3.11}. Moreover, $\mathbf{1}_{(1/n,T)}(t)\varphi_{n}$ converges weakly in $L^2(0,T)$ to $\varphi$.

\section*{Conclusion}
 The existence result,  Theorem \ref{Theorem.2.1}, for a fixed $n\in\mathbb{N}$, is based on several well-known facts concerning inverse problems for the Kelvin--Voigt equations with an integral over-determination condition. In fact, we deal with the family of inverse problems, where the right-hand side $j_n$ approximates a discontinuous function as $n\to\infty$. In order to pass to the limit, we have obtained several uniform estimates.
The main novelty, Theorem \ref{theorem3.1}, is related to passing to the limit as $n\to\infty$. We apply the scaling $\overline{\mathbf{v}}_n(x,\overline{t})=\mathbf{v}_n({x},\overline{t}/n)$ to obtain the auxiliary inverse problem on the infinitesimal initial layer. 
In the limit, we obtain new initial data $\mathbf{v}({x},0)=\overline{\mathbf{v}}(x,1)$  defined by \eqref{eq3.5}, for the limit problem \eqref{eq3.6}.

\subsection*{\bf Acknowledgments}
The authors express their gratitude to K. Khompysh for useful advice and discussions.
S.N. Antontsev was supported by the Russian Science Foundation project no. 25-11-00056.  I. Kuznetsov was supported by the Ministry of Science and Higher Education of Kazakhstan under project no. AP26199323.

\subsection*{\bf Data Availability Statement} 
The authors state that no data were generated or analyzed in this paper, so data availability is not applicable.

\subsection*{\bf Declarations}

\subsection*{\bf Conflict of interest} The authors state that there is no
conflict of interest.

\bibliographystyle{elsarticle-num} 
\bibliography{bibl}
\end{document}